\documentclass[11pt]{article}

\usepackage[margin=1in]{geometry} 
\usepackage{amsmath,amssymb,amsfonts,amsthm}
\usepackage{graphicx}
\usepackage{float}
\usepackage{xcolor}
\usepackage[authoryear, round]{natbib}
\usepackage{hyperref}
\usepackage{makecell}
\hypersetup{colorlinks,citecolor=blue,linkcolor=blue,breaklinks=true}
\usepackage{epstopdf}
\usepackage{yhmath} 
\usepackage{booktabs}
\usepackage{algorithm}
\usepackage{algpseudocode}
\usepackage{enumitem}
\usepackage{subcaption}
\usepackage{natbib}
\usepackage{ulem}
\allowdisplaybreaks
\usepackage{tikz}
\usetikzlibrary{shapes,arrows,positioning,backgrounds,shadows,calc}
\definecolor{myblue}{RGB}{43,87,154}
\definecolor{myorange}{RGB}{242,80,34}
\definecolor{mygreen}{RGB}{0,163,0}
\tikzstyle{arrow} = [thick,->,>=stealth]

\allowdisplaybreaks

\newtheorem{theorem}{Theorem}[section]
\newtheorem{lemma}{Lemma}[section]
\newtheorem{corollary}[theorem]{Corollary}
\newtheorem{proposition}[theorem]{Proposition}

\newtheorem{remark}{Remark}[section]
\newtheorem{assumption}{Assumption}[section]

\numberwithin{equation}{section}

\DeclareMathOperator*{\argmax}{arg\,max}

\begin{document}
	
\title{Boundary-aware Reinforcement Learning for Hypercube State Spaces via Deterministic Policy Gradient}
	
	\author{
		Lijun Bo
		\thanks{lijunbo@xidian.edu.cn, School of Mathematics and Statistics, 
        Xidian University, Xi'an, 710126, China}
		\and
		Yijie Huang
        \thanks{yijie.huang@polyu.edu.hk, Department of Applied Mathematics, The Hong Kong Polytechnic University, Kowloon, Hong Kong, China.}
        \and
        Chenhao Lu
		\thanks{luchenhao\_ustc@mail.ustc.edu.cn, School of Mathematical Sciences, University of Science and Technology of China, Hefei, 230026, China}
	}
	
	\date{}
	
	\maketitle

\begin{abstract}
 We develop a continuous-time deterministic policy gradient framework for reinforcement learning with reflected state dynamics, where the state process is governed by a controlled reflected stochastic differential equation on a hypercube. Under suitable regularity assumptions, we establish the connection between the value function and the Neumann Bellman equation, introduce an advantage-rate function that yields a deterministic policy gradient formula, and prove the martingale characterization theorem. Motivated by these theoretical results, we propose a continuous-time deep deterministic policy gradient algorithm for reflected stochastic systems, in which the Neumann boundary condition is imposed via either soft penalization or hard architectural constraint. We further quantify the discrepancy between the ideal continuous-time dynamics and the discretely sampled exploratory dynamics executed in practice, showing that the error decays as the time grid is refined and exploration noise vanishes. Our experiments on reservoir control problems illustrate the effectiveness of the RL framework, highlighting that boundary-aware methods substantially reduce Neumann boundary residuals and enhance learning stability.

\vspace{0.1in}
\noindent{\textbf{Keywords}}: reflected diffusion process, deterministic policy gradient, reinforcement learning, Neumann boundary condition, reservoir regulation.
\end{abstract}

\section{Introduction}
\label{sec:introduction}

Reinforcement learning (RL) provides a general framework for sequential decision making in which an agent improves its policy through repeated interaction with an unknown environment. Classical RL has been developed predominantly in discrete time, where the environment is modeled as a Markov decision process and the agent updates value functions or policies from sampled transitions. Among the most influential value-based methods, Q-learning (see \cite{wat89} and \cite{wat92}) learns the optimal state-action value function in a model-free manner and has become a cornerstone of discrete-time RL. Temporal-difference learning (\cite{sut88}), SARSA (\cite{Rum94}), and actor-critic methods (\cite{kon00}, \cite{sut18}) further established the algorithmic foundation for modern RL. The combination of value-based RL with deep neural networks led to the deep Q-network (DQN) (\cite{mni15}), which demonstrated that high-dimensional sensory inputs can be handled by deep function approximation. Policy-gradient methods (\cite{will92} and \cite{sut00}) and their actor-critic variants have also become central in continuous-action control, where direct optimization of a parameterized policy is often more natural than maximization over a discrete action set.

Despite these successes, many real-world decision problems are not intrinsically discrete in time. Robotic manipulation, autonomous driving, high-frequency trading, queueing networks, energy storage, and water reservoir regulation all involve systems whose states evolve continuously and whose decisions may be updated at high frequency. Early attempts to bring RL into continuous time include advantage updating (\cite{ba94}), the continuous-time value-function and policy-improvement framework of \cite{doya00}, and the stochastic control perspective of \cite{muno97}. These works already emphasized that the continuous-time formulation should not be viewed as a mere limiting case of a fixed discrete-time algorithm. A central difficulty is that objects that are well behaved in discrete time may degenerate when the time step becomes small. In particular, \cite{tall19} showed that standard Q-learning-type methods can be highly sensitive to time discretization in near continuous-time environments, and that the classical discrete-time Q-function is not the right object in the continuous-time limit. This observation motivates the development of RL algorithms directly at the continuous-time level, with time discretization introduced only at the implementation stage.

A systematic stochastic-control formulation of continuous-time RL was introduced by \cite{wang20}, where exploration is modeled through entropy-regularized relaxed controls for diffusion processes. Building on this formulation, \cite{jia22a} developed a martingale approach to policy evaluation in continuous time and space, showing that the value function of a fixed policy can be characterized by the martingality of a suitable reward-adjusted process. The same martingale viewpoint was then used in \cite{jia22b} to derive continuous-time policy gradient and actor-critic algorithms. A major step toward value-based learning in continuous time was made by \cite{jia23}, who introduced the continuous-time counterpart of Q-learning. Since the usual 'big' Q-function collapses to the value function in continuous time, \cite{jia23} defined a first-order state-action object, the so-called 'little' q-function, which is closely related to the instantaneous advantage rate and the Hamiltonian. This q-function can be characterized by martingale conditions and used to design on-policy and off-policy actor-critic algorithms without discretizing the control problem in advance. The continuous-time RL literature has since expanded in several directions. Continuous-time q-learning has been extended to mean field control problems, where the state distribution enters the dynamics and reward, and the integrated and essential q-functions arise naturally (see \cite{wei24} and \cite{wei25}). More recently, \cite{ren26a} investigated mean field control with controlled common noise and the accompanying work \cite{ren26b} developed martingale-based q-learning and actor-critic algorithms for this formulation. Related martingale and q-learning ideas have also been developed for jump-diffusion models, both under Shannon entropy \cite{gao25} and under Tsallis entropy \cite{bo24b}. Another line of work studies the accuracy of discretely sampled stochastic policies, addressing the gap between continuous-time exploratory formulations and implementable piecewise-constant action processes (see \cite{jia25}). These developments show that continuous-time RL is not only a mathematical limit of discrete-time RL, but a distinct framework in which stochastic calculus, martingale methods, Hamilton-Jacobi-Bellman equations, and policy optimization interact in a fundamental way.

Most of the above continuous-time RL works are formulated with stochastic or relaxed policies. This is natural for exploration, but it also creates computational and conceptual difficulties. First, the relaxed policy is a measure-valued control, and its implementation therefore requires high-frequency sampling from the prescribed action distribution. Second, stochastic policies introduce an additional action-integration layer into policy evaluation and policy improvement. In continuous action spaces, expectations with respect to the policy distribution must generally be computed analytically or approximated by Monte Carlo sampling. The latter may increase the computational cost of the policy update, especially for high-dimensional action spaces. These considerations motivate deterministic policy gradient methods, which optimize a parameterized deterministic feedback policy. In discrete time, deterministic policy gradient methods (\cite{sil14}) provide an efficient alternative to stochastic policy gradients in continuous action spaces, since the policy gradient only integrates over the state distribution. The deep deterministic policy gradient algorithm (DDPG) by \cite{lil15} combines this idea with replay buffers and target networks, two stabilizing devices inherited from deep Q-learning. Recently, deterministic policy gradients have also been developed directly in continuous time. \cite{cheng25} derived a continuous-time deterministic policy gradient formula through the advantage-rate function and proposed a CT-DDPG algorithm based on martingale characterization. This framework has subsequently been extended to time-inconsistent stochastic control, where \cite{guo26} combined deterministic policy-gradient updates, martingale characterizations and inner fixed-point iterations to learn deterministic
equilibrium policies. In the mean-field direction, \cite{cheng26} developed a model-free deterministic actor-critic framework for extended mean-field control, in which both the dynamics and rewards may depend on the joint distribution of states and controls. This line of work is particularly relevant to applications in which deterministic feedback controls are ultimately intended for implementation.

The present paper studies a complementary issue that has received much less attention in continuous-time RL: state boundary. In many applications, the state process is constrained by some limits. For example, a reservoir has lower and upper storage bounds; an inventory system has capacity constraints; a queue length or storage process may be constrained to remain nonnegative; an engineering system may be required to stay inside a prescribed safe region. A natural mathematical model for such constrained dynamics is a reflected stochastic differential equation. When the state reaches the boundary, a finite-variation regulator pushes it back into the admissible domain. On a hypercube, this reflection acts componentwise on the lower and upper faces. The corresponding Bellman equation is therefore not an ordinary interior PDE, but a parabolic equation with Neumann boundary condition. The boundary condition determines how the critic should behave on reflecting faces, how the boundary regulator enters It\^o's formula, and how one should design stable value-function approximations near the boundary.

Motivated by these observations, this paper develops a boundary-aware actor-critic RL framework for reflected diffusion control on a hypercube. For a fixed policy, the value function satisfies a Neumann Bellman equation. We then use this structure to construct a continuous-time deep deterministic policy gradient method for reflected dynamics, which we call CT-DDPG-R. The main contributions of this paper are summarized as follows.
\begin{itemize}
    \item[(i)] For a fixed parameterized deterministic Markov policy, we characterize the corresponding value function through a parabolic Bellman equation on the interior of the hypercube together with Neumann conditions on its reflecting faces. We derive a continuous-time deterministic policy gradient formula. A key feature of the analysis is that the regulator terms disappear from the final gradient expression because the value function satisfies the Neumann boundary condition. The result extends continuous-time deterministic policy gradient theory from unconstrained diffusions to state-constrained reflected systems. Then, we introduce the continuous-time advantage-rate function associated with a deterministic policy and establish a martingale characterization of both the value function and the advantage-rate function. 
     \item[(ii)] Guided by the theoretical results, we propose CT-DDPG-R, a boundary-aware continuous-time deterministic policy algorithm. The method combines martingale-based critic learning, deterministic actor updates, experience replay, target networks, and two treatments of the Neumann condition: a soft Neumann penalty in the critic loss and a hard Neumann architecture based on cosine Fourier features. We evaluate the proposed method on one-, two-, and five-dimensional reservoir regulation problems. The experiments demonstrate that incorporating the Neumann condition substantially reduces boundary residuals and improves training stability, with the advantages becoming more pronounced as the state dimension increases.
     \item[(iii)] The theoretical target policy is a continuous-time deterministic feedback, whereas the actions executed during training are sampled on a discrete time grid and perturbed by exploration noise. We quantify the discrepancy between the reflected state process under the ideal target policy and the process generated by the implementable piecewise-constant exploratory actions. The resulting estimate provides a theoretical justification for the discrete-time data-collection procedure used in CT-DDPG-R.
\end{itemize}

The rest of the paper is organized as follows. Section \ref{sec:model} introduces the controlled reflected SDE on a hypercube and the objective functional. Section \ref{sec:neumann} derives the Neumann Bellman equation for a fixed deterministic policy and connects it with the value function. Section \ref{sec:dpg-m} establishes the deterministic policy gradient formula and develops the martingale characterizations used for model-free learning. Section \ref{sec:algorithm} presents the CT-DDPG-R algorithm with the soft and hard constraints, and estimates the error of the state process in real environment. Section \ref{sec:application} applies our algorithm to the reservoir control problems and reports its performance.

\section{Problem Formulation}\label{sec:model}

In this section, we formulate the controlled reflected diffusion problem on a hypercube. After specifying the reflected state dynamics and admissible policies, we define the value function associated with a deterministic Markov policy. This setup provides the stochastic control framework on which the subsequent Neumann Bellman equation, martingale characterization, and deterministic policy gradient are built.

Let $n\geq1$ and $-\infty<\underline{x}_i<\overline{x}_i<\infty$ for $i=1,\ldots,n$. Consider a hypercube given by $ Q:=\prod_{i=1}^n [\underline{x}_i,\overline{x}_i]\subset\mathbb{R}^n$ whose inner is $Q^\circ:=\prod_{i=1}^n (\underline{x}_i,\overline{x}_i)$. We then introduce the following reflected dynamics on the hypercube given by $Q$, which is given by, for $t\in(0,T]$ with $T>0$,
\begin{align}\label{eq:cube-rsde}
X_t=\xi_0+\int_0^t b(s,X_s,a_s)ds+\int_0^t\sigma(s,X_s,a_s)dW_s+K_t^--K_t^+\in Q,
\end{align}
where $W=(W_t)_{t\in[0,T]}$ is an $m$-dimensional Brownian motion defined on the filtered probability space $(\Omega,\mathcal{F},\mathbb{P},\mathbb{F})$ with the filtration $\mathbb{F}=(\mathcal{F}_t)_{t\in[0,T]}$ satisfying the usual conditions, and $\xi_0\in Q$ is the square-integrable initial data independent of $W$. Here, $a=(a_t)_{t\in[0,T]}$ is the $\mathbb{F}$-adapted control strategy which takes values on a convex and compact action set $A\subset\mathbb{R}^d$, $b:[0,T]\times\mathbb{R}^n\times A\to\mathbb{R}^n$ and $\sigma:[0,T]\times\mathbb{R}^n\times A\to\mathbb{R}^{n\times m}$ are coefficients of reflected dynamics. In particular, $K^\pm=(K_t^{\pm,1},\ldots,K_t^{\pm,n})_{t\in[0,T]}$ are continuous and componentwisely non-decreasing processes satisfying $K_0^\pm=0$ and for any $i=1,\dots,n$,
\begin{align*}
\int_0^T \mathbf 1_{\{X_t^i>\underline{x}_i\}}\,dK_t^{-,i}=0,\quad \int_0^T \mathbf 1_{\{X_t^i<\overline{x}_i\}}\,dK_t^{+,i}=0.    
\end{align*}
The $i$-th element of $K^\pm$ is pushed only when it hits the corresponding lower or upper face. Equation \eqref{eq:cube-rsde} can be understood in the standard Skorokhod sense on $Q$ (see \cite{pi14}).

In contrast to the stochastic policies commonly adopted in prior continuous-time RL works (e.g., \cite{jia23}), which require sampling from action distributions and integrating over continuous action spaces, we adopt a deterministic Markov policy framework. Under this setting, the control action at time $t$ and state $x$ is prescribed by a deterministic feedback law of the form $a_t = \mu(t, X_t)$, with $\mu:[0,T]\times Q \to A$. To enable flexible function approximation and integration with deep learning, we introduce a parameterized family of deterministic policies $\mu_\phi: [0,T]\times \mathbb{R}^n \to A$, indexed by the parameter vector $\phi\in\mathbb{R}^k$.

Let $r:[0,T]\times Q\times A\to\mathbb{R}$ be the running reward and $g:Q\to\mathbb{R}$ be the terminal reward. Now,  we fix a discount factor $\beta > 0$. For any policy parameter $\phi \in \mathbb{R}^k$, let $X^{\phi}=(X_t^{\phi})_{t\in[0,T]}$ be the reflected state process satisfying \eqref{eq:cube-rsde} with the Markovian feedback control $a_t=\mu_\phi(t,X_t^{\phi})$ for $t\in[0,T]$. The performance of this policy is measured by the following dynamical expected cumulative return given by, for $(t,x)\in[0,T]\times Q$ and $\phi\in\mathbb{R}^k$,
\begin{align}
\label{eq:cube-value}
V^{\phi}(t,x):=\mathbb{E}\left[\int_t^T e^{-\beta (s-t)} r\left(s,X_s^{\phi},\mu_\phi(s,X_s^{\phi})\right)ds+e^{-\beta( T-t)}g(X_T^{\phi})\big|X_t^\phi=x\right],    
\end{align}
and we aim to maximize $V^{\phi}(t,x)$ over $\phi \in \mathbb{R}^k$.

\section{Neumann Bellman Equation on Hypercube}
\label{sec:neumann}
In this section, we characterize the value function \eqref{eq:cube-value} through a Neumann Bellman equation on the hypercube. Before that, several assumptions are imposed to ensure that our problem is well posed and that the associated value function is well defined.

\begin{assumption}[Regularity of Coefficients]\label{ass:coefficient}
The functions $b(t,x,a)$, $\sigma(t,x,a)$ and $r(t,x,a)$ are continuous in $(t,x,a)$. Moreover, there exists a constant $C >0$ such that, for any $(t,x,a),(t,x',a')\in[0,T]\times Q\times A$,
\begin{align*}
|b(t,x,a)-b(t,x',a')|+|\sigma(t,x,a)-\sigma(t,x',a')|&\le C\left(|x-x'|+|a-a'|\right),\nonumber\\
|b(t,x,a)|+|\sigma(t,x,a)| &\le C\left(1+|x|+|a|\right),\nonumber\\
|r(t,x,a)-r(t,x',a')|+|g(x)-g(x')| &\le C \left(|x-x'|+|a-a'|\right), \nonumber\\
|r(t,x,a)|+|g(x)| &\leq C\left(1+|x|^2+|a|^2\right).
\end{align*}
\end{assumption}

\begin{assumption}[Regularity of Feedback Policy]\label{ass:policy}
The map $(\phi,t,x)\mapsto \mu_\phi(t,x)$ is continuous on
$\mathbb{R}^k\times[0,T]\times Q$. Moreover, for any compact parameter set $\Phi\subset\mathbb{R}^k$,
there exists $L_\Phi>0$ such that $|\mu_\phi(t,x)-\mu_\phi(t',x')|\le L_\Phi (|t-t'|+|x-x'|)$ for any $(\phi,t,t',x,x')\in \Phi\times[0,T]^2\times Q^2$.
\end{assumption}

\begin{remark}
Assumptions \ref{ass:coefficient}, \ref{ass:policy} yield that the closed-loop coefficients $b_\phi(t,x):=b(t,x,\mu_\phi(t,x))$ and $\sigma_\phi(t,x):=\sigma(t,x,\mu_\phi(t,x))$ are Lipschitz in $x\in Q$. Since $Q$ satisfies the uniform exterior sphere condition and the uniform cone condition, it follows from Theorem 2.5.4 in \cite{pi14} that reflected stochastic differential equation (RSDE) \eqref{eq:cube-rsde} has a unique strong solution for each fixed $\phi$.
\end{remark}

\begin{assumption}[Uniform Ellipticity]
\label{ass:ellipticity}
The diffusion coefficient $\sigma(t,x,a)$ satisfies the uniform ellipticity condition, i.e., there exists a constant $\kappa>0$ such that $\xi^\top \Sigma(t,x,a)\,\xi \ge \kappa |\xi|^2$ for $(t,x,a)\in[0,T]\times Q \times A$ and $\xi\in\mathbb{R}^n$. Here, $\Sigma(t,x,a):=\sigma(t,x,a)\sigma(t,x,a)^\top$.
\end{assumption}

To support the subsequent theoretical derivations of the deterministic policy gradient and the martingale characterization, we impose the following regularity assumption on the value function \eqref{eq:cube-value} :
\begin{assumption}
\label{ass:value}
For each fixed policy parameter $\phi$, the value function $V^\phi \in C^{1,2}([0,T) \times Q^{\circ}) \cap C([0,T] \times Q)$ with $\nabla_xV^{\phi} \in C([0,T) \times Q)$. Furthermore, its 1st and 2nd spatial derivatives are absolutely integrable over $[0,T)$ for any $x\in Q$, i.e., there exists a function $\Psi\in L^1([0,T))$ such that
\begin{align*}
\sup_{x \in Q}\left( |\nabla_x V^{\phi}(t,x)|+|\nabla^2_x V^{\phi}(t,x)| \right) \leq \Psi(t),\quad \forall t\in[0,T].
\end{align*}
\end{assumption}

\begin{remark}
For reflected diffusion control problems defined on hypercubes, the value function is conventionally characterized via standard dynamic programming and viscosity solution theory as the unique continuous viscosity solution to the corresponding Neumann boundary value problem \eqref{eq:neumann-pde}. Nevertheless, to the best of our knowledge, a general classical regularity theorem for such reflected hypercube control problems remains absent in the existing literature. In this work, we impose the regularity condition $V^\phi\in C^{1,2}([0,T)\times Q^{\circ}) \cap C([0,T]\times Q)$ with $\nabla_x V^{\phi} \in C([0,T) \times Q)$, instead of the stronger regularity requirement $V^\phi\in C^{1,2}([0,T]\times Q)$. This distinction is critical for problem formulations on hypercube domains. Specifically, compatibility conditions, which connect the terminal condition and the Neumann boundary condition, may be violated. As a result, full classical regularity over the closed cylindrical domain $[0,T]\times Q$ cannot be generally guaranteed. The regularity assumption adopted herein aligns with the classical regularity results established in \cite{bo24a} for Robin boundary value problems on the nonnegative orthant.
\end{remark}

Before deriving the Neumann Bellman equation, we record a basic property of reflected diffusions that will be used repeatedly below. Although the finite-variation regulators $K^-$ and $K^+$ accumulate when the process hits the boundary, the state process itself does not spend positive Lebesgue time on the boundary. In other words, the reflection is carried by boundary local time, which is singular with respect to the time variable (c.f. Proposition 6.1 in \cite{kang14}).
\begin{lemma}\label{lem:boundary-zero-time}
Under Assumptions \ref{ass:coefficient}, \ref{ass:policy} and \ref{ass:ellipticity}, let $X^\phi=(X^{\phi}_t)_{t\in[0,T]}$ be a strong solution to the RSDE \eqref{eq:cube-rsde} under $\mu_\phi$. Then $\int_0^T \mathbf {1}_{\partial Q}(X_s^\phi)ds = 0$, a.s..
\end{lemma}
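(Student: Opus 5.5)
Since $\partial Q=\bigcup_{i=1}^n\bigl(\{x\in Q: x_i=\underline{x}_i\}\cup\{x\in Q: x_i=\overline{x}_i\}\bigr)$, it suffices to show, for each fixed $i$ and each level $c\in\{\underline{x}_i,\overline{x}_i\}$, that $\int_0^T \mathbf 1_{\{X^{\phi,i}_s=c\}}\,ds=0$ a.s. The plan is to look at the one-dimensional component $X^{\phi,i}$. By \eqref{eq:cube-rsde} it is a continuous semimartingale:
\begin{align*}
X^{\phi,i}_t=\xi_0^i+\int_0^t b^i_\phi(s,X^\phi_s)\,ds+\int_0^t \sigma^i_\phi(s,X^\phi_s)\,dW_s+K^{-,i}_t-K^{+,i}_t .
\end{align*}
Here $\sigma^i_\phi$ denotes the $i$-th row. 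The regulators $K^{\pm,i}$ are continuous and of finite variation, so they do not contribute to the quadratic variation. Consequently
\begin{align*}
d\langle X^{\phi,i}\rangle_s=\Sigma_{ii}\bigl(s,X^\phi_s,\mu_\phi(s,X^\phi_s)\bigr)\,ds .
\end{align*}

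The key tool is the occupation times formula for continuous semimartingales. For any bounded Borel $f$,
\begin{align*}
\int_0^T f(X^{\phi,i}_s)\,d\langle X^{\phi,i}\rangle_s=\int_{\mathbb{R}} f(y)\,L^y_T(X^{\phi,i})\,dy,
\end{align*}
where $L^y$ is the semimartingale local time. Take $f=\mathbf 1_{\{c\}}$. The right-hand side vanishes because $\{c\}$ has Lebesgue measure zero and $y\mapsto L^y_T$ is a.s. finite (indeed càdlàg in $y$). Hence
\begin{align*}
\int_0^T \mathbf 1_{\{X^{\phi,i}_s=c\}}\,\Sigma_{ii}(s,X^\phi_s,\mu_\phi(s,X^\phi_s))\,ds=0 \quad \text{a.s.}
\end{align*}

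Next I invoke Assumption \ref{ass:ellipticity} with $\xi=e_i$, which gives $\Sigma_{ii}\ge\kappa>0$ everywhere. Therefore $\kappa\int_0^T\mathbf 1_{\{X^{\phi,i}_s=c\}}\,ds=0$ a.s. Taking the finite union over $i$ and over the two faces yields $\int_0^T \mathbf 1_{\partial Q}(X^\phi_s)\,ds=0$ a.s. Note that corners and edges are already covered by the union bound.

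The main point to check is that the occupation times formula applies to $X^{\phi,i}$ despite the reflection terms. This requires two things. First, $X^{\phi,i}$ must be a genuine continuous semimartingale; this holds because $K^{\pm,i}$ are continuous, adapted and nondecreasing, and the strong solution from the preceding Remark has integrable drift and diffusion, as the coefficients are bounded on the compact set $[0,T]\times Q\times A$. Second, only the martingale part must enter $\langle X^{\phi,i}\rangle$. Both are standard (Revuz–Yor, Ch. VI), so the argument reduces to verifying these hypotheses. An alternative route, if one prefers to avoid local times, is to apply Itô–Tanaka to $|X^{\phi,i}-c|$, but the occupation-formula route is the cleaner one.
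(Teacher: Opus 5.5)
Your proof is correct. It also differs from the paper's treatment, since the paper gives no argument of its own for this lemma: it only refers to Proposition 6.1 of \cite{kang14}, a result stated for a broader class of reflected diffusions.

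You instead use the geometry of the hypercube directly. Every face is a level set $\{x_i=c\}$ of a single coordinate, so it is enough to control the one-dimensional continuous semimartingales $X^{\phi,i}$. For these, the occupation times formula combined with $d\langle X^{\phi,i}\rangle_s=\Sigma_{ii}\,ds\ge\kappa\,ds$ (Assumption \ref{ass:ellipticity} with $\xi=e_i$) gives zero time on each face, and the finite union over faces finishes the proof.

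Your route gives a short, self-contained proof in place of a citation, and it shows which hypotheses are actually used. You never need the complementarity conditions on $K^{\pm}$, nor the Lipschitz regularity of $\mu_\phi$ beyond what guarantees a strong solution. You only need two things: the regulators are continuous, adapted, finite-variation processes, so they contribute nothing to $\langle X^{\phi,i}\rangle$; and the diagonal entries $\Sigma_{ii}$ are bounded below. That is nondegeneracy in the normal directions of the faces, which is weaker than full uniform ellipticity.

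The cited result buys generality the paper does not need here. On the hypercube, your occupation-density argument is the more economical and transparent way to establish the lemma.
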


We define the controlled generator associated with a fixed action $a\in A$ by, for $\varphi\in C^2([0,T]\times Q)$,
\begin{align*}
\mathcal{L}^a[\varphi](t,x):=\partial_t \varphi(t,x)+b(t,x,a)^\top \nabla_x\varphi(t,x)+\frac{1}{2} \mathrm{Tr}[\Sigma(t,x,a)\nabla_x^2\varphi(t,x)]-\beta \varphi(t,x).    
\end{align*}
Then, we have the following result on the HJB equation for reflected diffusion processes under deterministic feedback policies.
\begin{lemma}\label{lem:verification}
Let Assumptions \ref{ass:coefficient}, \ref{ass:policy}, \ref{ass:ellipticity} and \ref{ass:value} hold. Then, the value function $V^\phi$ defined by \eqref{eq:cube-value} satisfies the following Neumann boundary value problem given by
\begin{align}\label{eq:neumann-pde}
\begin{cases}
\displaystyle \mathcal L^{\mu_\phi(t,x)}[V^\phi](t,x)+r(t,x,\mu_\phi(t,x))=0,~~\forall (t,x)\in[0,T)\times Q^\circ,\\
\displaystyle \partial_{x_i}V^\phi(t,x)=0,~~\forall (t,x)\in[0,T)\times \partial Q,~ i\in I_-(x)\cup I_+(x),\\
\displaystyle V^\phi(T,x)=g(x),~~\forall x\in Q,
\end{cases}
\end{align}
where the index sets $I_-(x)$ and $I_+(x)$ are respectively defined by $I_-(x):=\{i\in\{1,\dots,n\}:~x_i=\underline x_i\}$ and $I_+(x):=\{i\in\{1,\dots,n\}:~x_i=\overline x_i\}$. Conversely, if a function $V \in C^{1,2}([0,T)\times Q^{\circ})\cap C([0,T]\times Q)$ satisfying $\nabla_x V \in C([0,T) \times Q)$ is a classical solution of \eqref{eq:neumann-pde}, then it admits the probabilistic representation \eqref{eq:cube-value}.
\end{lemma}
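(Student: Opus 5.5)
I will prove the two directions separately, working throughout with a fixed $\phi$ and writing $X=X^{\phi}$, $b_\phi(t,x)=b(t,x,\mu_\phi(t,x))$, $\Sigma_\phi$, $r_\phi$ for the closed-loop coefficients.

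\textbf{Converse (verification) direction.} This is the more mechanical part, so I do it first. Let $V$ be a classical solution of \eqref{eq:neumann-pde} with the stated regularity. Fix $(t,x)$ and $t<\tau<T$, and apply It\^o's formula to $e^{-\beta(s-t)}V(s,X_s)$ on $[t,\tau]$. The regulator terms appear as
\[
\int_t^\tau e^{-\beta(s-t)}\nabla_xV(s,X_s)^\top\,(dK^-_s-dK^+_s).
\]
Since $dK^{\mp,i}$ charges only $\{X^i=\underline x_i\}$ (resp.\ $\{X^i=\overline x_i\}$), and $\partial_{x_i}V$ vanishes there by the Neumann condition (using continuity of $\nabla_xV$ up to $\partial Q$), these terms are zero. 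The $ds$-integrand $\mathcal L^{\mu_\phi}[V]+r_\phi$ vanishes on $Q^\circ$. By Lemma~\ref{lem:boundary-zero-time}, the set of times with $X_s\in\partial Q$ is Lebesgue-null, so the boundary values of the second derivatives (which may not exist) are irrelevant.

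There is a technical gap here: It\^o's formula needs $C^{1,2}$ up to the boundary, and we only have it on $Q^\circ$. I would close it by approximation. Take a $C^{1,2}$ mollification/extension $V^\varepsilon$ of $V$ on a shrunken cube (or compose $V$ with a contraction map $Q\to Q_\varepsilon\subset Q^\circ$). Apply It\^o's formula to $V^\varepsilon$, then pass to the limit $\varepsilon\to0$. The ingredients for the limit are:
\begin{itemize}
\item dominated convergence, justified by the derivative bound $\Psi(s)$ of Assumption~\ref{ass:value}, which also holds for a solution of this regularity class;
\item continuity of $\nabla_xV$ on $[0,T)\times Q$, which gives convergence of the $dK$ integrals to zero;
\item the zero boundary occupation time from Lemma~\ref{lem:boundary-zero-time}.
\end{itemize}
This approximation step is the main obstacle of the whole proof.

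Next, the stochastic integral is a true martingale because $\sigma_\phi$ is bounded on the compact set $Q\times A$ and $\nabla_xV$ is bounded on $[t,\tau]\times Q$. Taking conditional expectations gives
\[
V(t,x)=\mathbb E\Big[\int_t^\tau e^{-\beta(s-t)}r_\phi\,ds+e^{-\beta(\tau-t)}V(\tau,X_\tau)\Big].
\]
Letting $\tau\uparrow T$ and using continuity of $V$ on $[0,T]\times Q$, boundedness, and $V(T,\cdot)=g$ yields \eqref{eq:cube-value}.

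\textbf{Direct direction.} Here $V^\phi$ is assumed regular, and I derive the PDE from the Markov property.
\begin{itemize}
\item \emph{Martingale property.} By the Markov property of the strong solution, the process $M_s=e^{-\beta(s-t)}V^\phi(s,X_s)+\int_t^se^{-\beta(u-t)}r_\phi\,du$ is a martingale.
\item \emph{Decomposition via It\^o.} Applying It\^o's formula, justified by the same approximation, writes $M$ as
\[
\text{martingale}+\int e^{-\beta(s-t)}\big(\mathcal L^{\mu_\phi}[V^\phi]+r_\phi\big)\,ds+\int e^{-\beta(s-t)}\nabla_xV^\phi\cdot(dK^--dK^+).
\]
The last two terms form a continuous finite-variation martingale, so they vanish identically a.s.
\item \emph{Interior equation.} Start at $x\in Q^\circ$. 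Before the exit time from a small ball inside $Q^\circ$ we have $dK=0$. Dividing by $h$ and letting $h\downarrow0$, continuity of the integrand gives $\mathcal L^{\mu_\phi}[V^\phi]+r_\phi=0$ at $(t,x)$.
\item \emph{Neumann condition.} Once the $ds$-term is known to vanish, we get $\int e^{-\beta(s-t)}\nabla_xV^\phi\cdot(dK^--dK^+)=0$. Suppose instead that $\partial_{x_i}V^\phi(t_0,x_0)\neq0$ at some $x_0$ on the lower face $i$, say $>0$ by continuity, with the other active faces handled similarly. Start the process at $x_0$. By uniform ellipticity, $K^{-,i}$ increases strictly immediately (the local time at a face is positive right after the start), while the other regulators do not act near $x_0$ for a short time, or act with controlled sign. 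This produces a strictly positive contribution, a contradiction. Corners require checking each active index, using that $\partial_{x_i}V^\phi$ is continuous and that each $K^{\pm,i}$ independently charges its face.
\end{itemize}
The terminal condition $V^\phi(T,x)=g(x)$ is immediate from the definition \eqref{eq:cube-value}.
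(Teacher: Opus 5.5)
Your route is the paper's. The Markov property (DPP) gives the martingale, It\^o's formula up to the exit time from a ball in $Q^\circ$ gives the interior equation, a contradiction based on immediate growth of the regulator gives the Neumann condition, and the converse is a verification argument. Using ``a continuous finite-variation martingale is constant'' pathwise is slightly sharper than the paper's expectation identities. Your approximation step also addresses a point the paper passes over: It\^o's formula for functions that are $C^{1,2}$ only on $Q^\circ$.

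The step that fails as written is the Neumann condition at edges and corners. If $x_0$ lies on two or more faces, the identity $\int_t^s e^{-\beta(u-t)}\nabla_xV^\phi(u,X_u)\cdot(dK^-_u-dK^+_u)=0$ involves several regulators at once. They are weighted by partial derivatives $\partial_{x_j}V^\phi$ whose signs are not known a priori. For instance, $\partial_{x_1}V^\phi>0$ and $\partial_{x_2}V^\phi<0$ at a lower--lower corner could make the $K^{-,1}$ and $K^{-,2}$ contributions cancel. So there is no ``controlled sign'' and no contradiction. The fix is the one the paper uses:
\begin{itemize}
\item Run the contradiction only at $(t,x)$ with $x\in\mathrm{ri}(\Gamma_i^\pm)$. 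Choose a neighbourhood $U$ with $U\cap\partial Q\subset\Gamma_i^\pm$, so that only the single regulator $K^{\pm,i}$ acts before the exit time.
\item Since $\mathrm{ri}(\Gamma_i^\pm)$ is dense in $\Gamma_i^\pm$ and $\nabla_xV^\phi\in C([0,T)\times Q)$, you then get $\partial_{x_i}V^\phi(t,x)=0$ for every $x\in\partial Q$ and every $i\in I_-(x)\cup I_+(x)$.
\end{itemize}

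Two smaller points concern your approximation step.
\begin{itemize}
\item In the converse, the bound $\Psi$ belongs to Assumption~\ref{ass:value}, which concerns $V^\phi$ only. Nothing in the hypotheses on a general classical solution $V$ controls $\nabla_x^2V$ near $\partial Q$. So your claim that the bound ``also holds for a solution of this regularity class'' is unjustified, and you would need it as an extra hypothesis. The paper simply applies It\^o's formula directly here, so this gap is shared with the paper rather than introduced by you.
\item In the direct direction, $\Psi$ does not bound $\partial_tV^\phi$, which enters It\^o's formula for the composition of $V^\phi$ with your contraction $Q\to Q_\varepsilon$. Derive the interior equation first; it needs only It\^o's formula inside a ball in $Q^\circ$, with no approximation. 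Then use it to express $\partial_tV^\phi$ through $\nabla_xV^\phi$ and $\nabla_x^2V^\phi$, dominated by $C(1+\Psi)$, before running the global decomposition.
\end{itemize}
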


\begin{proof}
Let $(t,x)\in[0,T]\times Q$ and write $X_s=X_s^{t,x,\phi}$ for $s\in[t,T]$. We first prove the interior equation in \eqref{eq:neumann-pde}.  Fix $(t,x)\in[0,T)\times Q^\circ$. We choose $\rho>0$ such that $\overline{B(x,\rho)}\subset Q^\circ$, where $B(x,\rho):=\{y:~|y-x| < \rho\}$. Given $h>0$, define the stopping time $\tau_h:=\inf\{s\ge t:~X_s\notin B(x,\rho)\}\wedge (t+h)\wedge T$. For $s\in [t,\tau_h]$, no reflection occurs for $X_s$, and hence $\mathrm{d} K_s^{-,i}=\mathrm{d} K_s^{+,i}=0$ for $i=1,\ldots,n$. Applying It\^o's formula to $e^{-\beta(s-t)}V^\phi(s,X_s)$ from $t$ to $\tau_h$,
and using the dynamic programming principle (DPP), we obtain
\begin{align}\label{eq:Ito-V}
0=\mathbb{E}\left[\int_t^{\tau_h}e^{-\beta(s-t)}\left(\mathcal L^{\mu_\phi(s,X_s)}[V^\phi](s,X_s)+r\left(s,X_s,\mu_\phi(s,X_s)\right)\right)\mathrm{d}s\right].
\end{align}
Dividing both sides of  of \eqref{eq:Ito-V} by $h$ and letting $h\downarrow 0$, the dominated convergence theorem (DCT), together with the continuity of the coefficients and of the derivatives of $V^\phi$ in $Q^\circ$, yields that
\begin{align*}
\mathcal{L}^{\mu_\phi(t,x)}[V^\phi](t,x)+r(t,x,\mu_\phi(t,x))=0,\quad(t,x)\in[0,T)\times Q^\circ .   
\end{align*}
Substituting $t=T$ into \eqref{eq:cube-value} gives the terminal condition $V^\phi(T,x)=g(x)$ for all $x\in Q$.

Next, we prove the Neumann boundary condition in \eqref{eq:neumann-pde}. Fix $i\in\{1,\ldots,n\}$ and $(t,x)\in[0,T)\times \mathrm{ri}(\Gamma_i^-)$,
where $\mathrm{ri}(\Gamma_i^-)$ denotes the relative interior of the face
$\Gamma_i^-=\{x\in Q:~\ x_i=\underline{x}_i\}$.
We choose a neighborhood $U$ of $x$ such that
$U\cap \partial Q\subset \Gamma_i^-$. Given $h>0$, let $\tau_h:=\inf\{s\ge t:~X_s\notin U\}\wedge (t+h)\wedge T$. On \([t,\tau_h]\), the only possible regulator term is $K^{-,i}$.
By DPP, the interior equation and Lemma \ref{lem:boundary-zero-time} yield that
\begin{align}
\label{eq:lower-face-integral-zero-new}
\mathbb{E}\left[\int_t^{\tau_h}e^{-\beta(s-t)}\partial_{x_i}V^\phi(s,X_s) \mathrm{d} K_s^{-,i}\right]=0.    
\end{align}
We now show that $\partial_{x_i}V^\phi(t,x)=0$ by contradiction. Suppose first that $\partial_{x_i}V^\phi(t,x)>0$.
By continuity of $\partial_{x_i}V^\phi$, we may assume that $\partial_{x_i}V^\phi(\tilde{t},\tilde{x})\ge \varepsilon>0$, for all $(\tilde{t},\tilde{x})\in [t,t+h]\times (U\cap \Gamma_i^-)$. By Assumption \ref{ass:ellipticity}, the diffusion is uniformly nondegenerate. Since the reflected state process starts from the boundary face, the regulator $K^{-,i}$ increases with positive probability before $\tau_h$, that is $\mathbb{P}(K_{\tau_h}^{-,i}-K_t^{-,i}>0)>0$. Hence, we have
\begin{align*}
\mathbb{E}\left[\int_t^{\tau_h}e^{-\beta(s-t)}\partial_{x_i}V^\phi(s,X_s)\mathrm{d}K_s^{-,i}\right] \geq\varepsilon
\mathbb{E}\left[\int_t^{\tau_h}e^{-\beta(s-t)}\mathrm{d} K_s^{-,i} \right]>0,
\end{align*}
which contradicts \eqref{eq:lower-face-integral-zero-new}. The case $\partial_{x_i}V^\phi(t,x)<0$ leads to an analogous contradiction. Hence, we have $\partial_{x_i}V^\phi(t,x)=0$ for all $(t,x)\in[0,T)\times \mathrm{ri}(\Gamma_i^-)$. The proof for the upper face $\Gamma_i^+:=\{x\in Q:~x_i=\overline{x}_i\}$ is analogous, and applying the same contradiction argument yields that $\partial_{x_i}V^\phi(t,x)=0$ for all $(t,x)\in[0,T)\times \mathrm{ri}(\Gamma_i^+)$. Thus we can extend the result to the whole $[0,T) \times \partial Q$ by the continuity of $\nabla_x V^{\phi} \in C([0,T)\times Q)$.

Conversely, suppose that $V \in C^{1,2}([0,T)\times Q^{\circ})\cap C([0,T]\times Q)$ with $\nabla_x V \in C([0,T) \times Q)$ be a classical solution of \eqref{eq:neumann-pde}. Fix $(t,x)\in[0,T)\times Q$ and set $\varepsilon \in (0,T-t)$. For $s\in[t,T-\varepsilon]$, we define
\begin{align}\label{eq:Y-martingale}
Y_s:=e^{-\beta(s-t)}V(s,X_s^{\phi})+\int_t^s e^{-\beta(u-t)}r\left(u,X_u^{\phi},\mu_\phi(u,X_u^{\phi})\right)du.
\end{align}
It follows from It\^o's rule that 
\begin{align*}
\mathrm{d}Y_s&=e^{-\beta(s-t)}\left(\mathcal{L}^{\mu_\phi(s,X_s^{\phi})}[V](s,X_s^{\phi})+r\left(s,X_s^{\phi},\mu_\phi(s,X_s^{\phi})\right)\right)\mathrm{d} s \nonumber\\
&\quad+e^{-\beta(s-t)}\nabla_x V(s,X_s^{\phi})^\top\sigma\left(s,X_s^{\phi},\mu_\phi(s,X_s^{\phi})\right)\mathrm{d}W_s \nonumber\\
&\quad+e^{-\beta(s-t)}\sum_{i=1}^n\partial_{x_i}V(s,X_s^{\phi})\mathrm{d} K_s^{-,i}-e^{-\beta(s-t)}\sum_{i=1}^n\partial_{x_i}V(s,X_s^{\phi})\mathrm{d}K_s^{+,i}.
\end{align*}
The $\mathrm{d}K^{\pm,i}$ terms vanish by the Neumann boundary condition, and 
the drift term vanishes by the interior equation in
\eqref{eq:neumann-pde} and Lemma \ref{lem:boundary-zero-time}. Consequently, $Y=(Y_s)_{s\in[t,T-\varepsilon]}$ is a martingale. Applying the martingality on the right hand of \eqref{eq:Y-martingale}, it follows that
\begin{align*}
V(t,x)=\mathbb{E}_{t,x}\left[\int_t^{T-\varepsilon}e^{-\beta(s-t)}r\left(s,X_s^{\phi},\mu_\phi(s,X_s^{\phi})\right)\mathrm{d}s+e^{-\beta(T-\varepsilon-t)}
V(T-\varepsilon,X_{T-\varepsilon}^{\phi})\right].
\end{align*}
Let $\varepsilon\downarrow0$ and then the continuity of $V \in C([0,T] \times Q)$, the terminal condition $V(T,\cdot)=g(\cdot)$ and the boundedness of the compact set $[0,T]\times Q$ jointly imply that
\begin{align*}
V(t,x)=\mathbb{E}_{t,x}\left[\int_t^Te^{-\beta(s-t)}
r\left(s,X_s^{\phi},\mu_\phi(s,X_s^{\phi})\right)\mathrm{d}s+e^{-\beta(T-t)}g(X_T^{\phi})\right],   
\end{align*}
which is precisely \eqref{eq:cube-value}. Thus, the proof of the lemma is complete. 
\end{proof}

\section{Policy Gradient and Martingales on Hypercube}\label{sec:dpg-m}

In this section, we establish two theoretical foundations for reflected diffusion control on the hypercube: a deterministic policy gradient proposition and a martingale characterization. The first result follows the line of deterministic policy gradient methods developed in \cite{sil14} and \cite{cheng25}, while the second extends the martingale approach of \cite{jia23} for controlled diffusions to the reflected state setting. These two results correspond, respectively, to the policy-improvement and policy-evaluation components of the actor-critic algorithm developed in the next section. In particular, the deterministic policy gradient formula provides the direction for updating the actor, whereas the martingale characterization yields model-free learning conditions for the value function and the advantage-rate function.

\subsection{Deterministic policy gradient on the hypercube}
\label{sec:dpg}
For each $\phi\in \mathbb R^k$, we define the continuous-time advantage rate function by
\begin{align}\label{eq:q-function}
q_\phi(t,x,a):=\mathcal L^a[V^\phi](t,x)+r(t,x,a),\quad \forall  (t,x,a)\in[0,T)\times Q\times A.   
\end{align}
Then, we have from Lemma \ref{lem:verification} that
\begin{align}\label{eq:adv-zero}
q_\phi\left(t,x,\mu_\phi(t,x)\right)=0,\quad \forall (t,x)\in[0,T)\times Q^{\circ}.
\end{align}

To establish the deterministic policy gradient, we need additional differentiability of the coefficients and feedback policy with respect to the action variable and the policy parameter.

\begin{assumption}
\label{ass:action-diff}
The functions $\nabla_a b$, $\nabla_a \Sigma$, and $\nabla_a r$ exist and are continuous on $[0,T]\times Q\times A$.
Moreover, the partial derivative $(\phi,t,x)\mapsto \nabla_\phi \mu_\phi(t,x)$ is continuous on $\mathbb{R}^k\times[0,T]\times Q$.
\end{assumption}
Fix $\phi\in \mathbb{R}^k$. By using the differentiability from Assumption \ref{ass:action-diff}, we can take the derivative with respect to $\phi$ on the both side of \eqref{eq:q-function}, and hence, for $(t,x,a)\in[0,T)\times Q\times A$,
\begin{align}\label{eq:nabla-q-funtion}
\nabla_a q_\phi(t,x,a)=
(\nabla_a b(t,x,a))^\top \nabla_x V^\phi(t,x)
+\frac{1}{2}\mathrm{Tr}((\nabla_a\Sigma(t,x,a))\nabla_x^2V^\phi(t,x))+\nabla_a r(t,x,a).   
\end{align}
We also need to derive a stability property of the closed-loop system under parameter perturbations. The following lemma asserts that small changes in the policy parameter $\phi$ result in corresponding small changes in the resulting state trajectory in the mean-square sense, uniformly over the entire time horizon.
\begin{lemma}
\label{lem:cube-stability}
Let Assumptions \ref{ass:coefficient} and \ref{ass:policy} hold. Then, for any $(t,x)\in[0,T]\times Q$ and $h\in\mathbb R^k$, it holds that
\begin{align*}
\lim_{\varepsilon\to0}
\mathbb{E}\left[\sup_{s\in[t,T]}\left|X_s^{t,x,\phi+\varepsilon h}-X_s^{t,x,\phi}\right|^2\right]=0.    
\end{align*}
\end{lemma}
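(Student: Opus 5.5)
Since both processes live in the same hypercube and solve the same Skorokhod problem, I would prove the lemma by combining a Lipschitz estimate for the Skorokhod map with a Gronwall argument. Write $X^\varepsilon:=X^{t,x,\phi+\varepsilon h}$ and $X:=X^{t,x,\phi}$. Set $Y^\varepsilon_s:=x+\int_t^s b_{\phi+\varepsilon h}(u,X^\varepsilon_u)du+\int_t^s\sigma_{\phi+\varepsilon h}(u,X^\varepsilon_u)dW_u$, and define $Y_s$ the same way with $\phi$ and $X$. Then $X^\varepsilon=\Gamma(Y^\varepsilon)$ and $X=\Gamma(Y)$, where $\Gamma$ is the Skorokhod map on $Q$ with normal reflection. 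On a hypercube the constraint decouples componentwise. Each coordinate is the two-sided Skorokhod map on $[\underline x_i,\overline x_i]$, which is Lipschitz in the sup norm with constant $2$ (Kruk–Lehoczky–Ramanan–Shreve). Hence
\begin{align*}
\sup_{u\in[t,s]}|X^\varepsilon_u-X_u|\le C_n\sup_{u\in[t,s]}|Y^\varepsilon_u-Y_u|.
\end{align*}
This is the key step. If one prefers to avoid citing that result, I would instead apply Itô's formula to $|X^\varepsilon-X|^2$. The cross terms $(X^\varepsilon-X)^\top d(K^{\varepsilon,-}-K^{\varepsilon,+}-K^-+K^+)$ are $\le 0$ because $Q$ is convex and the reflection directions are inward normals. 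For example, $(X^{\varepsilon,i}-X^i)dK^{\varepsilon,-,i}=(\underline x_i-X^i)dK^{\varepsilon,-,i}\le0$. Either way the regulators can be discarded.

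Next I would split the drift difference as
\begin{align*}
b_{\phi+\varepsilon h}(u,X^\varepsilon_u)-b_\phi(u,X_u)=\big[b_{\phi+\varepsilon h}(u,X^\varepsilon_u)-b_{\phi+\varepsilon h}(u,X_u)\big]+\big[b_{\phi+\varepsilon h}(u,X_u)-b_\phi(u,X_u)\big],
\end{align*}
and treat $\sigma$ the same way. For $|\varepsilon|\le1$, the parameters lie in the compact set $\Phi=\{\phi+\theta h:|\theta|\le1\}$. By Assumptions \ref{ass:coefficient} and \ref{ass:policy}, the first bracket is bounded by $C(1+L_\Phi)|X^\varepsilon_u-X_u|$. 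The second bracket is bounded by $C\,\delta_\varepsilon$, where $\delta_\varepsilon:=\sup_{(u,y)\in[0,T]\times Q}|\mu_{\phi+\varepsilon h}(u,y)-\mu_\phi(u,y)|$. Since $(\phi,t,x)\mapsto\mu_\phi(t,x)$ is continuous, it is uniformly continuous on the compact set $\Phi\times[0,T]\times Q$, so $\delta_\varepsilon\to0$ as $\varepsilon\to0$.

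I would then take squares and sup over time and use Cauchy–Schwarz for the $du$ integral and Burkholder–Davis–Gundy (Doob) for the stochastic integral. With $f(s):=\mathbb E[\sup_{u\in[t,s]}|X^\varepsilon_u-X_u|^2]$, this gives
\begin{align*}
f(s)\le C\int_t^s f(u)\,du+C\,T\,\delta_\varepsilon^2 .
\end{align*}
Here $f$ is finite because $Q$ is bounded. Gronwall then yields $f(T)\le CT\delta_\varepsilon^2e^{CT}\to0$, which is the claim. The main obstacle is the regulator step, i.e.\ justifying that the Skorokhod map on the hypercube is Lipschitz, or equivalently the sign argument. Everything else is a standard stability estimate made easy by the compactness of $Q$ and $A$.
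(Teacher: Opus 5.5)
Your proposal is correct and follows essentially the same route as the paper. Both arguments represent $X^{t,x,\phi+\varepsilon h}$ and $X^{t,x,\phi}$ as images of the unreflected processes under the Lipschitz Skorokhod map on $Q$, split the coefficient difference into a Lipschitz part and a uniform term $\delta_\varepsilon\to0$ (via uniform continuity of $(\phi,t,x)\mapsto\mu_\phi(t,x)$ on a compact set), and close with BDG and Gronwall. The differences are only cosmetic: you justify the Lipschitz property by componentwise reduction to the two-sided map on an interval rather than citing Dupuis--Ishii, and your alternative sign argument for the regulators is the one the paper itself uses later in Theorem~\ref{thm:state-process-error}.
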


\begin{proof}
We write $X_u^\varepsilon:=X_u^{t,x,\phi+\varepsilon h}$ and $X_u:=X_u^{t,x,\phi}$, and define the corresponding closed-loop coefficients $ b_\varepsilon(u,y):=b(u,y,\mu_{\phi+\varepsilon h}(u,y))$, $\sigma_\varepsilon(u,y):=\sigma(u,y,\mu_{\phi+\varepsilon h}(u,y))$, $b_0(u,y):=b(u,y,\mu_{\phi}(u,y))$ and $\sigma_0(u,y):=\sigma(u,y,\mu_{\phi}(u,y))$. Let $Y^\varepsilon=(Y^\varepsilon_u)_{u\in[t,T]}$ and $Y=(Y_u)_{u\in[t,T]}$ satisfy the following dynamics without reflection:
\begin{align*}
Y_u^\varepsilon=x+\int_t^u b_\varepsilon(r,X_r^\varepsilon)\mathrm{d}r+\int_t^u \sigma_\varepsilon(r,X_r^\varepsilon)\mathrm{d}W_r,\quad Y_u=x+\int_t^u b_0(r,X_r)\mathrm{d}r+\int_t^u \sigma_0(r,X_r)\mathrm{d}W_r.    
\end{align*}
Then, the reflected processes $X^{\varepsilon}=(X^{\varepsilon}_t)_{t\in[0,T]}$ and $X=(X_t)_{t\in[0,T]}$ can be represented via a so-called Skorokhod map $\Gamma_{Q}:C([0,T];\mathbb{R}^n)\to C([0,T];Q)$ on the hypercube $Q$ by $X^\varepsilon=\Gamma_Q(Y^\varepsilon)$ and $X=\Gamma_Q(Y)$, which means that $X^{\varepsilon},X$ are solutions respectively to the Skorokhod problems $(Q,Y^{\varepsilon})$ and $(Q,Y)$. Moreover, the Skorokhod map $\Gamma_{Q}$ is Lipschitz in the sup norm (c.f. \cite{du91}). Consider $F_\varepsilon(u):=
\mathbb{E}[\sup_{s\in[t,u]}|X_s^\varepsilon-X_s|^2]$ for $u\geq t$ and we have $F_\varepsilon(u)\leq C\mathbb{E}[\sup_{s\in[t,u]}|Y_s^\varepsilon-Y_s|^2]$ for some constant $C=C(T)$. From Jensen's inequality and Burkholder-Davis-Gundy Inequality (BDG Inequality), it follows that
\begin{align*}
\mathbb{E}\left[\sup_{s\in[t,u]}\left|Y_s^\varepsilon-Y_s\right|^2\right]
&\le C\int_t^u
\mathbb{E}\left[\left|b_\varepsilon(r,X_r^\varepsilon)-b_0(r,X_r)\right|^2\right]\mathrm{d}r+C\int_t^u
\mathbb{E}\left[\left|\sigma_\varepsilon(r,X_r^\varepsilon)-\sigma_0(r,X_r)\right|^2\right]\mathrm{d}r.
\end{align*}
Note that $|b_\varepsilon(r,X_r^\varepsilon)-b_0(r,X_r)|
\leq|b_\varepsilon(r,X_r^\varepsilon)-b_\varepsilon(r,X_r)|
+|b_\varepsilon(r,X_r)-b_0(r,X_r)|$. Since $b$ is Lipschitz in $(x,a)$, the first term $|b_\varepsilon(r,X_r^\varepsilon)-b_\varepsilon(r,X_r)|\le C(|X_r^\varepsilon-X_r|+|\mu_{\phi+\varepsilon h}(r,X_r^{\varepsilon})-\mu_{\phi+\varepsilon h}(r,X_r)|)$. We can choose $|h| \leq 1, \varepsilon \leq 1$ so that, by Assumption \ref{ass:policy}, there exists $L>0$ such that $|\mu_{\tilde{\phi}}(r,x)-\mu_{\tilde{\phi}}(r,x')| \leq L|x-x'|$ for $(\tilde{\phi},r,x,x') \in \{\tilde{\phi}:~|\tilde{\phi}-\phi|\leq 1\} \times [0,T] \times Q^2$, which yields $|b_\varepsilon(r,X_r^\varepsilon)-b_\varepsilon(r,X_r)|\le C(|X_r^\varepsilon-X_r|)$. Here, the constant $C$ depends on the compact set of parameters containing both $\phi$ and $\phi+\varepsilon h$. For the second term, we consider 
$\delta_\varepsilon:=
\sup_{(u,y)\in[t,T]\times Q}|b_\varepsilon(u,y)-b_0(u,y)|+
\sup_{(u,y)\in[t,T]\times Q}|\sigma_\varepsilon(u,y)-\sigma_0(u,y)|$ which tends to zero since $Q$ is compact and the mapping $(\phi,u,y)\mapsto \mu_\phi(u,y)$ is continuous. Then $|b_\varepsilon(r,X_r)-b_0(r,X_r)|\le\sup_{(u,y)\in[t,T]\times Q}|b_\varepsilon(u,y)-b_0(u,y)|\le \delta_\varepsilon$. Combining these estimates, we obtain that $|b_\varepsilon(r,X_r^\varepsilon)-b_0(r,X_r)|\le C|X_r^\varepsilon-X_r|+\delta_\varepsilon$. A similar argument for the diffusion coefficient gives that
$|\sigma_\varepsilon(r,X_r^\varepsilon)-\sigma_0(r,X_r)|\le C|X_r^\varepsilon-X_r|+\delta_\varepsilon$. Consequently, we obtain
\begin{align*}
\mathbb{E}\left[\sup_{s\in[t,u]}\left|Y_s^\varepsilon-Y_s\right|^2\right] \leq C \int_t^u \mathbb{E} \left[ \left| X^{\varepsilon}_r -X_r \right|^2 \right] \mathrm{d} r + C \delta_{\varepsilon}^2.    
\end{align*}
This implies that $F_\varepsilon(u)\leq C\int_t^u F_\varepsilon(r)\mathrm{d}r + C\delta_\varepsilon^2$. It follows from Gronwall's inequality that $F_\varepsilon(T)\le C\delta_\varepsilon^2 e^{C(T-t)}\to0$ as $\varepsilon \to 0$. Thus, the desired $L^2$-uniform stability is established.
\end{proof}

We now present the main result of this section, which gives the deterministic policy gradient for reflected diffusions on a hypercube. 

\begin{proposition}
\label{prop:cube-dpg}
Let assumptions of Lemma \ref{lem:verification} and Assumption \ref{ass:action-diff} hold.
Then, for any $(t,x)\in[0,T]\times Q$, the mapping
$\phi\mapsto V^\phi(t,x)$ is G\^ateaux differentiable. More precisely, for
any $h\in\mathbb R^k$,
\begin{align*}
\lim_{\varepsilon\to0}
\frac{V^{\phi+\varepsilon h}(t,x)-V^\phi(t,x)}{\varepsilon}
=\mathbb{E}\left[\int_t^T e^{-\beta(s-t)}
(\nabla_\phi\mu_\phi(s,X_s^{\phi})h)^\top\nabla_a q_\phi(s,X_s^{\phi},\mu_\phi(s,X_s^{\phi}))\mathrm{d}s\bigg|X_t^{\phi}=x\right].
\end{align*}
This equivalently yields that
\begin{align}\label{eq:cube-dpg}
\nabla_{\phi}V^\phi(t,x)=\mathbb{E}\left[\int_t^T e^{-\beta(s-t)}
(\nabla_\phi\mu_\phi(s,X_s^{\phi}))^\top\nabla_a q_\phi(s,X_s^{\phi},\mu_\phi(s,X_s^{\phi}))ds\bigg|X_t^{\phi}=x\right].
\end{align}
\end{proposition}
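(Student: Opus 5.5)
We follow the standard performance-difference route of \cite{cheng25}, adapted to reflection via the Neumann condition. Fix $(t,x)$, $h\in\mathbb R^k$, and write $\phi_\varepsilon:=\phi+\varepsilon h$ and $X^\varepsilon:=X^{t,x,\phi_\varepsilon}$.

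\textbf{Performance-difference identity.} Apply It\^o's formula to $e^{-\beta(s-t)}V^\phi(s,X^\varepsilon_s)$ on $[t,T-\delta]$. The regulator terms $\partial_{x_i}V^\phi\,\mathrm dK^{\pm,i}$ vanish because $V^\phi$ satisfies the Neumann condition of Lemma \ref{lem:verification} and $K^{\pm,i}$ charges only $\{X^{\varepsilon,i}=\underline x_i\}$ or $\{X^{\varepsilon,i}=\overline x_i\}$. The $\mathrm ds$ term only requires the generator on $Q^\circ$, since Lemma \ref{lem:boundary-zero-time} (applied to $\phi_\varepsilon$) shows that $X^\varepsilon$ spends zero Lebesgue time on $\partial Q$. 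The stochastic integral is a true martingale because $\nabla_xV^\phi$ is bounded on $[t,T-\delta]\times Q$ and $\sigma$ is bounded on the compact set. Let $\delta\downarrow0$, using $V^\phi\in C([0,T]\times Q)$ and $V^\phi(T,\cdot)=g$, and add the reward. This gives
\begin{align*}
V^{\phi_\varepsilon}(t,x)-V^\phi(t,x)=\mathbb E\left[\int_t^T e^{-\beta(s-t)}q_\phi\big(s,X^\varepsilon_s,\mu_{\phi_\varepsilon}(s,X^\varepsilon_s)\big)\mathrm ds\right],
\end{align*}
where the definition \eqref{eq:q-function} of $q_\phi$ has been used. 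The integrability of the $\mathrm ds$ integrand near $T$ comes from the bound by $\Psi\in L^1$ in Assumption \ref{ass:value}. This step is exactly where boundary-awareness enters.

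\textbf{First-order expansion.} By \eqref{eq:adv-zero} and Lemma \ref{lem:boundary-zero-time}, $q_\phi(s,X^\varepsilon_s,\mu_\phi(s,X^\varepsilon_s))=0$ for a.e.\ $s$, almost surely. Subtracting this and applying the mean value theorem in $a$, which is legitimate since $A$ is convex, gives
\begin{align*}
\frac{V^{\phi_\varepsilon}(t,x)-V^\phi(t,x)}{\varepsilon}=\mathbb E\left[\int_t^T e^{-\beta(s-t)}\int_0^1 \nabla_a q_\phi\big(s,X^\varepsilon_s,a^{\varepsilon,\lambda}_s\big)^\top\frac{\mu_{\phi_\varepsilon}-\mu_\phi}{\varepsilon}(s,X^\varepsilon_s)\,\mathrm d\lambda\,\mathrm ds\right],
\end{align*}
where $a^{\varepsilon,\lambda}_s$ is the convex combination of $\mu_\phi(s,X^\varepsilon_s)$ and $\mu_{\phi_\varepsilon}(s,X^\varepsilon_s)$ with weight $\lambda$. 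A second application of the mean value theorem, now in $\phi$, together with the continuity of $\nabla_\phi\mu$ on compacts (Assumption \ref{ass:action-diff}), shows that the difference quotient converges to $\nabla_\phi\mu_\phi(s,X_s)h$ uniformly plus a term of size $o(1)+|X^\varepsilon_s-X_s|$-continuity.

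\textbf{Passage to the limit.} By Lemma \ref{lem:cube-stability}, $X^\varepsilon\to X^{t,x,\phi}$ in probability, uniformly in $s$. Along any sequence $\varepsilon\to0$ we pass to an a.s.\ convergent subsequence. Since $\nabla_a b$, $\nabla_a\Sigma$, $\nabla_a r$ are continuous and $\nabla_xV^\phi,\nabla_x^2V^\phi$ are continuous on $[0,T)\times Q^\circ$, the representation \eqref{eq:nabla-q-funtion} shows that the integrand converges for a.e.\ $s$ wherever $X_s\in Q^\circ$; by Lemma \ref{lem:boundary-zero-time} this holds for a.e.\ $s$. For domination, the integrand is bounded by $C(1+\Psi(s))$, since $\nabla_a b$, $\nabla_a\Sigma$, $\nabla_a r$ and $\nabla_\phi\mu$ are bounded on compacts. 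Dominated convergence then yields the stated limit, and the subsequence argument upgrades this to the full limit. Because the limit is linear in $h$, \eqref{eq:cube-dpg} follows.

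\textbf{Main obstacle.} The delicate step is justifying It\^o's formula for $V^\phi$. The function is only $C^{1,2}$ in the interior, it has no regularity at $t=T$, and it is evaluated along the \emph{perturbed} reflected process. We would handle this by localizing to $[t,T-\delta]$ and approximating $Q$ from inside by shrunken cubes, or equivalently by mollifying $V^\phi$. The boundary terms are then controlled by the continuity of $\nabla_xV^\phi$ up to $\partial Q$ together with the Neumann condition. The Hessian contributions that appear near $\partial Q$ are controlled by Lemma \ref{lem:boundary-zero-time}, with integrability supplied by $\Psi$.
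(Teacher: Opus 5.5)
Your proposal is correct and follows essentially the same route as the paper. Both arguments apply It\^o's formula to $e^{-\beta(s-t)}V^\phi(s,X^{\phi+\varepsilon h}_s)$, use the Neumann condition to kill the regulator terms, obtain the performance-difference identity in terms of $q_\phi$, and subtract the zero term via \eqref{eq:adv-zero} and Lemma \ref{lem:boundary-zero-time}. They then perform a first-order expansion and pass to the limit by dominated convergence using Lemma \ref{lem:cube-stability} and $\Psi$. The differences are minor: you expand along a convex combination in $a$ rather than along $\theta\mapsto\mu_{\phi+\theta\varepsilon h}$, you use a subsequence argument, and you dominate by $C(1+\Psi)$. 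Your remark about justifying It\^o's formula near $\partial Q$ and $t=T$ addresses a point the paper passes over silently.
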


\begin{proof}
The case $t=T$ is immediate, since the terminal condition $V^\phi(T,x)=g(x)$ does not depend on
$\phi$. We therefore fix $(t,x)\in[0,T)\times Q$ and $h\in\mathbb{R}^k$, and consider a perturbed policy parameter $\phi+\varepsilon h$ for $\varepsilon\in\mathbb{R}/\{0\}$. For any $\eta\in(0,T-t)$, by applying the It\^o's rule to $e^{-\beta(s-t)}V^\phi(s,X_s^{\phi + \varepsilon h})$ on $[t,T-\eta]$, we have
\begin{align*}
&\mathrm{d}\left(e^{-\beta(s-t)}V^\phi(s,X^{\phi + \varepsilon h}_s)\right)
= e^{-\beta(s-t)}\mathcal{L}^{\mu_{\phi+\varepsilon h}(s,X^{\phi +\varepsilon h}_s)}[V^\phi](s,X^{\phi +\varepsilon h}_s)\mathrm{d}s \nonumber
\\
&\qquad+ e^{-\beta(s-t)}\nabla_x V^{\phi}(s,X^{\phi +\varepsilon h}_s)^\top \sigma(s,X_s^{\phi + \varepsilon h},\mu_{\phi+\varepsilon h}(s,X_s^{\phi + \varepsilon h}))\mathrm{d}W_s\\
&\qquad + e^{-\beta(s-t)} \sum_{i=1}^n \partial_{x_i} V^\phi(s,X^{\phi +\varepsilon h}_s) \mathrm{d} K^{-,i,\phi+ \varepsilon h}_s - e^{-\beta(s-t)} \sum_{i=1}^n \partial_{x_i} V^\phi(s,X^{\phi + \varepsilon h}_s)\mathrm{d} K^{+,i,\phi + \varepsilon h}_s. \nonumber
\end{align*}
Using the Neumann boundary condition in \eqref{eq:neumann-pde}, the reflection terms vanish. Integrating over $[t,T-\eta]$ and taking expectations given $X^{\phi+\varepsilon h}_t=x$, we obtain
\begin{align*}
V^\phi(t,x)&=\mathbb{E}\bigg[e^{-\beta(T-\eta-t)}V^\phi(T-\eta,X_{T-\eta}^{\phi+\varepsilon h})
\\
&\qquad -\int_t^{T-\eta} e^{-\beta(s-t)}\mathcal{L}^{\mu_{\phi+\varepsilon h}(s,X_s^{\phi + \varepsilon h})}[V^\phi](s,X_s^{\phi + \varepsilon h})\mathrm{d}s\bigg| X_t^{\phi + \varepsilon h}=x\bigg].
\end{align*}
Since $V^\phi\in C([0,T]\times Q)$ and $Q$ is compact, letting
$\eta \downarrow 0$, and the DCT yields that
\begin{align}\label{eq:val-under-phiprime}
V^\phi(t,x)&=\mathbb{E}\bigg[e^{-\beta(T-t)}g(X_T^{\phi + \varepsilon h}) \nonumber
\\
&\qquad -\int_t^T e^{-\beta(s-t)}\mathcal{L}^{\mu_{\phi+\varepsilon h}(s,X_s^{\phi + \varepsilon h})}[V^\phi](s,X_s^{\phi + \varepsilon h})\mathrm{d}s \bigg|X_t^{\phi + \varepsilon h}=x\bigg].
\end{align}
On the other hand, by the definition of \eqref{eq:cube-value}, we have
\begin{align}\label{eq:Vphiprime-def}
V^{\phi + \varepsilon h}(t,x)&=\mathbb{E}\bigg[e^{-\beta(T-t)}g(X_T^{\phi + \varepsilon h}) \nonumber
\\
&\qquad +\int_t^T e^{-\beta(s-t)} r\left(s,X_s^{\phi + \varepsilon h},\mu_{\phi+ \varepsilon h}(s,X_s^{\phi + \varepsilon h})\right)\mathrm{d} s \bigg| X_t^{\phi + \varepsilon h}=x\bigg].
\end{align}
Subtracting \eqref{eq:val-under-phiprime} from \eqref{eq:Vphiprime-def} gives that
\begin{align*}
V^{\phi+\varepsilon h}(t,x)-V^\phi(t,x)&=\mathbb{E}\Bigg[\int_t^T e^{-\beta(s-t)}\big(r(s,X_s^{\phi + \varepsilon h},\mu_{\phi+\varepsilon h}(s,X_s^{\phi + \varepsilon h})) \nonumber\\
&\qquad\qquad +\mathcal{L}^{\mu_{\phi+\varepsilon h}(s,X_s^{\phi + \varepsilon h})}[V^\phi](s,X_s^{\phi + \varepsilon h})\big)\mathrm{d}s\Big|X_t^{\phi + \varepsilon h}=x\Bigg]\\
&=\mathbb{E}\left[\int_t^T e^{-\beta(s-t)}q_\phi(s,X_s^{\phi + \varepsilon h},\mu_{\phi+\varepsilon h}(s,X_s^{\phi + \varepsilon h}))\mathrm{d}s\Big|X_t^{\phi + \varepsilon h}=x\right].
\end{align*}
By using Lemma \ref{lem:boundary-zero-time} and \eqref{eq:adv-zero}, we obtain
\begin{align*}
V^{\phi+\varepsilon h}(t,x)-V^\phi(t,x)&=\mathbb{E}\Bigg[\int_t^T e^{-\beta(s-t)}\Big(q_\phi(s,X_s^{\phi + \varepsilon h},\mu_{\phi+\varepsilon h}(s,X_s^{\phi + \varepsilon h}))\\
&\qquad \qquad -q_\phi(s,X_s^{\phi + \varepsilon h},\mu_{\phi}(s,X_s^{\phi + \varepsilon h}))\Big)\mathrm{d} s\Big|X_t^{\phi + \varepsilon h}=x\Bigg],
\end{align*}
which yields that
\begin{align*}
&\frac{V^{\phi+ \varepsilon h}(t,x)-V^\phi(t,x)}{\varepsilon}   
\\
&=\mathbb{E}\Bigg[\int_t^T e^{-\beta(s-t)}\frac{q_\phi\big(s,X_s^{\phi+\varepsilon h},\mu_{\phi+\varepsilon h}(s,X_s^{\phi+\varepsilon h})\big)-q_\phi\big(s,X_s^{\phi+\varepsilon h},\mu_{\phi}(s,X_s^{\phi+\varepsilon h})\big)}{\varepsilon}\mathrm{d} s \bigg|X_t^{\phi + \varepsilon h}=x\Bigg].
\end{align*}
This yields that
\begin{align}\label{eq:q-phi-diff}
&\frac{q_\phi\big(s,X_s^{\phi+\varepsilon h},\mu_{\phi+\varepsilon h}(s,X_s^{\phi+\varepsilon h})\big)-q_\phi\big(s,X_s^{\phi+\varepsilon h},\mu_{\phi}(s,X_s^{\phi+\varepsilon h})\big)}{\varepsilon} \nonumber
\\
&\qquad=\int_0^1\nabla_a q_\phi(s,X_s^{\phi+\varepsilon h},\mu_{\phi+\theta\varepsilon h}(s,X_s^{\phi+\varepsilon h}))^\top \nabla_\phi\mu_{\phi+\theta\varepsilon h}(s,X_s^{\phi+\varepsilon h})h\mathrm{d}\theta.
\end{align}
By Lemma \ref{lem:cube-stability}, $X_s^{\phi+\varepsilon h} \to X_s^{\phi}$ in probability for every $s \in [t,T]$. Moreover, the continuity of $(\phi,s,x)\mapsto\mu_\phi(s,x)$ and $(\phi,s,x)\mapsto\nabla_\phi\mu_\phi(s,x)$, together with the continuity of $\nabla_aq_\phi$ in the state and action variables, implies that the integrand on the right side of \eqref{eq:q-phi-diff} converges to $\nabla_a q_\phi \left(s,X_s^\phi,\mu_\phi(s,X_s^\phi) \right)^\top\nabla_\phi\mu_\phi(s,X_s^\phi)h$ in probability, $\mathrm{d}s\mathrm{d}\theta$-a.e..By Assumption \ref{ass:value} and \eqref{eq:nabla-q-funtion}, there exists a deterministic function $\Psi\in L^1([t,T])$ independent of $\varepsilon$ such that $|\nabla_a q_\phi(s,x,a)|\le\Psi(s)$ for all $(s,x,a)\in [t,T) \times Q\times A$. Furthermore, for sufficiently small $\varepsilon$, $|\nabla_\phi\mu_{\phi+\theta\varepsilon h}|$ is uniformly bounded by a deterministic constant because $X_s^{\phi+\varepsilon h}\in Q$ and $(\phi,s,x)\mapsto \nabla_\phi\mu_\phi(s,x)$ is continuous on a
compact set. Hence, we have from DCT that
\begin{align*}
\lim_{\varepsilon\to0}\frac{V^{\phi+\varepsilon h}(t,x)-V^\phi(t,x)}{\varepsilon}=\mathbb{E}\Bigg[\int_t^T e^{-\beta(s-t)}\nabla_a q_\phi\big(s,X_s^{\phi},\mu_\phi(s,X_s^{\phi})\big)^\top\nabla_\phi\mu_\phi(s,X_s^{\phi})h\mathrm{d}s \bigg|X_t^{\phi}=x\Bigg],    
\end{align*}
which completes the proof and yields \eqref{eq:cube-dpg}.
\end{proof}

Proposition \ref{prop:cube-dpg} extends the classical deterministic policy gradient of \cite{cheng25} to the continuous-time setting with reflected state dynamics. Notably, the reflection terms are absent from the final gradient formula: the Neumann boundary condition exactly cancels the boundary regulator contributions, ensuring that the actor update depends only on the interior dynamics.

\subsection{Martingale characterization on the hypercube}
\label{sec:martingale}

We now give the martingale characterization used for learning the value function and the advantage-rate function. The conditions imposed on the candidate pair $(\widehat{V},\widehat{q})$ are consistent with the Neumann Bellman equation and the properties of the advantage-rate function. A key feature of the following characterization is that the martingale condition is not imposed only along the closed-loop trajectory generated by $\mu_\phi$. Instead, it is required to hold under the locally perturbed of $\mu_\phi(t,x)$. This leads to an off-policy learning scheme with exploration.

\begin{theorem}\label{thm:cube-martingale}
Let assumptions of Lemma \ref{lem:verification} hold. Fix $\phi\in\mathbb R^k$. Let the candidate value function $\widehat{V} \in C^{1,2}([0,T)\times Q^{\circ})\cap C([0,T]\times Q)$ with $\nabla_x \widehat{V} \in C([0,T) \times Q)$, and the candidate advantage function $\widehat{q} \in C([0,T)\times Q\times A)$ satisfy
\begin{align}
\widehat{V}(T,x)=g(x),~\forall x\in Q,&\quad\partial_{x_i}\widehat{V}(t,x)=0,~\forall(t,x)\in[0,T)\times\partial Q~\text{and}~i\in I_-(x)\cup I_+(x),\label{eq:cube-hatV-boundary-condition}\\
&\widehat{q}(t,x,\mu_\phi(t,x))=0,\quad\forall (t,x)\in[0,T)\times Q^\circ.\label{eq:cube-hatq-zero}
\end{align}
Assume moreover that for every $(t,x)\in[0,T)\times Q^\circ$, there exists
a neighborhood $\mathcal{O}_{t,x}\subset A$ of $\mu_\phi(t,x)$ such that,
for every $a\in\mathcal{O}_{t,x}$, there exists a square-integrable $A$-valued adapted control process $\alpha=(\alpha_s)_{s\in[t,T]}$ with $\lim_{s\downarrow t}\alpha_s=a$, a.s.,
under which the process
\begin{align*}
\Bigg(e^{-\beta(s-t)}\widehat{V}(s,X_s^{t,x,\alpha})+\int_t^s e^{-\beta(u-t)}\left(r(u,X_u^{t,x,\alpha},\alpha_u)-\widehat{q}(u,X_u^{t,x,\alpha},\alpha_u)\right)\mathrm{d} u\Bigg)_{s\in[t,T]}
\end{align*}
is an $\mathbb{F}$-martingale, where the reflected state process $X^{t,x,\alpha}$ solves the dynamics:
\begin{align*}
\mathrm{d}X_s^{t,x,\alpha}=b(s,X_s^{t,x,\alpha},\alpha_s)\mathrm{d}s+\sigma(s,X_s^{t,x,\alpha},\alpha_s)\mathrm{d}W_s+\mathrm{d}K_s^- - \mathrm{d}K_s^+, \quad X_t^{t,x,\alpha}=x.
\end{align*}
Then, $\widehat{V}(t,x)=V^\phi(t,x)$ for $(t,x)\in[0,T]\times Q$ and $\widehat{q}(t,x,a)=q_\phi(t,x,a)$ for $(t,x,a)\in[0,T)\times Q^\circ \times \mathcal{O}_{t,x}$.
\end{theorem}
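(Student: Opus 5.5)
The plan has two parts: first identify $\widehat q$ with the Hamiltonian-type expression $\mathcal L^a[\widehat V]+r$ by a local Itô argument, and then use \eqref{eq:cube-hatq-zero} to reduce $\widehat V$ to a solution of the Neumann problem \eqref{eq:neumann-pde}. The converse part of Lemma \ref{lem:verification} then yields $\widehat V=V^\phi$, and consequently $\widehat q=q_\phi$.

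\textbf{Step 1: identification of $\widehat q$.} Fix $(t,x)\in[0,T)\times Q^\circ$, $a\in\mathcal O_{t,x}$, and the associated control $\alpha$. Choose $\rho>0$ with $\overline{B(x,\rho)}\subset Q^\circ$, and set $\tau_h:=\inf\{s\ge t: X^{t,x,\alpha}_s\notin B(x,\rho)\}\wedge(t+h)$. On $[t,\tau_h]$ no reflection occurs, so $dK^\pm=0$. Itô's formula applied to $e^{-\beta(s-t)}\widehat V(s,X_s)$ then shows that the given martingale equals a local martingale plus
\[
\int_t^{s} e^{-\beta(u-t)}\bigl(\mathcal L^{\alpha_u}[\widehat V](u,X_u)+r(u,X_u,\alpha_u)-\widehat q(u,X_u,\alpha_u)\bigr)\mathrm du .
\]
The stochastic integral is a true martingale up to $\tau_h$, because $\nabla_x\widehat V$ is bounded on the compact ball. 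Optional stopping at $\tau_h$ therefore gives
\[
\mathbb E\int_t^{\tau_h}e^{-\beta(u-t)}\bigl(\mathcal L^{\alpha_u}[\widehat V]+r-\widehat q\bigr)(u,X_u,\alpha_u)\,\mathrm du=0 .
\]
Divide by $h$ and let $h\downarrow0$. Since $\alpha_u\to a$ a.s., $X_u\to x$, $\mathbb P(\tau_h<t+h)\to0$, and the integrand is continuous and bounded on $\overline{B(x,\rho)}\times A$ for $u$ near $t$, the DCT yields
\[
\widehat q(t,x,a)=\mathcal L^a[\widehat V](t,x)+r(t,x,a).
\]
Continuity of $\widehat q$ and of the coefficients in $a$ is what allows the limit $\alpha_u\to a$ to be passed.

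\textbf{Step 2: the interior Bellman equation.} Take $a=\mu_\phi(t,x)\in\mathcal O_{t,x}$ in Step 1 and use \eqref{eq:cube-hatq-zero}. This gives
\[
\mathcal L^{\mu_\phi(t,x)}[\widehat V](t,x)+r(t,x,\mu_\phi(t,x))=0\quad\text{on }[0,T)\times Q^\circ .
\]
Together with \eqref{eq:cube-hatV-boundary-condition}, $\widehat V$ is then a classical solution of \eqref{eq:neumann-pde} in exactly the regularity class of the converse statement of Lemma \ref{lem:verification}. That converse gives the representation \eqref{eq:cube-value}, i.e. $\widehat V=V^\phi$ on $[0,T]\times Q$. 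Substituting into Step 1 and comparing with \eqref{eq:q-function} gives $\widehat q(t,x,a)=q_\phi(t,x,a)$ for $a\in\mathcal O_{t,x}$.

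\textbf{Main obstacle.} The delicate step is the limit in Step 1. We only know $\alpha_s\to a$ pathwise, with no rate, and the second derivatives of $\widehat V$ are controlled only in the interior. This is why the argument localizes in a ball strictly inside $Q^\circ$ and on $[t,t+h]$ with $t<T$, so that $\widehat V\in C^{1,2}$ there and all terms are uniformly bounded. It also remains to check that $h^{-1}\mathbb E\int_t^{\tau_h}\cdots$ and $h^{-1}\mathbb E\int_t^{t+h}\cdots$ share the same limit; this holds because $\mathbb P(\tau_h<t+h)=o(1)$ and the integrand is bounded. Square-integrability of $\alpha$ together with the boundedness of $A$ ensures the controlled RSDE is well posed.
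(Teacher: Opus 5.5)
Your proof is correct and has the same structure as the paper's: identify $\widehat q$ with $\mathcal L^a[\widehat V]+r$ locally, set $a=\mu_\phi(t,x)$ to recover the Neumann problem \eqref{eq:neumann-pde}, and conclude via the converse of Lemma~\ref{lem:verification}. The only difference is in Step~1: the paper argues pathwise by contradiction, using that a continuous martingale has no finite-variation part, so $\int_t^{\tau}e^{-\beta(u-t)}F(u,X_u,\alpha_u)\,\mathrm{d}u=0$ a.s.\ with $F:=\mathcal L^a[\widehat V]+r-\widehat q$, which is impossible if $F\ge\eta>0$ near $(t,x,a)$; you instead use optional stopping and a Dynkin-type limit $h\downarrow0$ inside an interior ball, which has the minor advantage of never invoking It\^o's formula near $\partial Q$, where $\widehat V$ is not assumed to be $C^{1,2}$.
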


\begin{proof}
Define the function $F(t,x,a):=\mathcal{L}^a[\widehat{V}](t,x)+r(t,x,a)-\widehat{q}(t,x,a)$ for $(t,x,a)\in[0,T)\times Q^\circ \times A$. We prove that $F(t,x,a)=0$ for any $(t,x,a)\in[0,T)\times Q^\circ \times \mathcal{O}_{t,x}$. Suppose that there exist $(t,x)\in[0,T)\times Q^\circ$ and $a\in\mathcal{O}_{t,x}$ such that $F(t,x,a)>0$. By using the continuity of $(t,x,a)\mapsto F(t,x,a)$, there exist $\eta,\delta>0$ such that $F(\bar{t},\bar{x},\bar{a})\ge \eta$ for $(\bar{t},\bar{x},\bar{a})$ satisfying $\max\{|\bar{t}-t|,\ |\bar{x}-x|,\ |\bar{a}-a|\}<\delta$. Without loss of generality, we may take $\delta<T-t$. Then there exists an $A$-valued adapted control process $\alpha=(\alpha_s)_{s\in[t,T]}$ with $\lim_{s\downarrow t}\alpha_s=a$, a.s. such that the process defined by
\begin{align*}
M_s:=e^{-\beta(s-t)}\widehat{V}(s,X_s^{t,x,\alpha})+\int_t^s e^{-\beta(u-t)}\left(r(u,X_u^{t,x,\alpha},\alpha_u)-\widehat{q}(u,X_u^{t,x,\alpha},\alpha_u)\right)\mathrm{d}u,\quad \forall s\in[t,T]   
\end{align*}
is an $\mathbb{F}$-martingale. Let the stopping time $\tau:=\inf\{s\ge t: \max\{|s-t|,|X_s^{t,x,\alpha}-x|,|\alpha_s-a|\}\ge \delta \} \wedge (t+\delta)$. By the continuity of $s\to X_s^{t,x,\alpha}$ and the property $\alpha_s\to a$ as $s\downarrow t$, we have $\tau>t$ a.s.. For $\varepsilon\in(0,T-t)$, applying the It\^o rule to
$e^{-\beta(s-t)}\widehat{V}(s,X_s^{t,x,\alpha})$ on $[t,T-\varepsilon]$, we obtain
\begin{align*}
\mathrm{d}M_s&=e^{-\beta(s-t)}(\mathcal{L}^{\alpha_s}[\widehat{V}](s,X_s^{t,x,\alpha})+r(s,X_s^{t,x,\alpha},\alpha_s)-\widehat{q}(s,X_s^{t,x,\alpha},\alpha_s))\mathrm{d}s
\\
&\quad+e^{-\beta(s-t)}\nabla_x\widehat{V}(s,X_s^{t,x,\alpha})^\top \sigma(s,X_s^{t,x,\alpha},\alpha_s)\mathrm{d} W_s
\\
&\quad+e^{-\beta(s-t)}
\sum_{i=1}^n \partial_{x_i}\widehat{V}(s,X_s^{t,x,\alpha})\mathrm{d} K_s^{-,i}-e^{-\beta(s-t)}\sum_{i=1}^n \partial_{x_i}\widehat{V}(s,X_s^{t,x,\alpha})\mathrm{d}K_s^{+,i}.
\end{align*}
By the boundary condition \eqref{eq:cube-hatV-boundary-condition}, the reflecting terms vanish. Since $(M_s)_{s\in[t,T-\varepsilon]}$ is still a martingale, its finite-variation part must vanish. Therefore, for all $s\in[t,T-\varepsilon]$,
\begin{align}\label{eq:cube-finite-variation-zero}
\int_t^s e^{-\beta(u-t)}\left(\mathcal{L}^{\alpha_u}[\widehat{V}](u,X_u^{t,x,\alpha})+r(u,X_u^{t,x,\alpha},\alpha_u)-\widehat{q}(u,X_u^{t,x,\alpha},\alpha_u)\right)\mathrm{d}u=0,\quad \text{a.s.}  
\end{align}
Since $\tau\le t+\delta<T$, we may apply
\eqref{eq:cube-finite-variation-zero} up to time $\tau$, and obtain that
\begin{align*}
0=\int_{t}^{\tau} e^{-\beta(u-t)}F(u,X_u^{t,x,\alpha},\alpha_u)\mathrm{d}u\ge\eta\int_{t}^{\tau} e^{-\beta(u-t)}\mathrm{d}u>0,\quad \text{a.s.},    
\end{align*}
which is a contradiction. Hence, $\widehat{q}(t,x,a)=\mathcal{L}^a[\widehat{V}](t,x)+r(t,x,a)$ for all $(t,x,a)\in[0,T)\times Q^\circ \times \mathcal{O}_{t,x}$. Taking $a=\mu_\phi(t,x) \in \mathcal{O}_{t,x}$ and using
\eqref{eq:cube-hatq-zero}, we get $\mathcal{L}^{\mu_\phi(t,x)}[\widehat{V}](t,x)+r(t,x,\mu_\phi(t,x))=0$ for all $(t,x)\in[0,T)\times Q^\circ$. Together with the boundary condition \eqref{eq:cube-hatV-boundary-condition}, this shows that
$\widehat{V}$ is a classical solution of the Neumann boundary problem
\eqref{eq:neumann-pde}. By Lemma \ref{lem:verification}, it follows that $\widehat{V}=V^\phi$ on $[0,T]\times Q$. Then, we conclude that $\widehat{q}(t,x,a)=\mathcal{L}^a[V^\phi](t,x)+r(t,x,a)=q_\phi(t,x,a)$ for all $(t,x)\in[0,T)\times Q^\circ$ and $a\in\mathcal O_{t,x}$.
\end{proof}

\section{Model-free CT-DDPG-R Algorithm}
\label{sec:algorithm}

In this section, we design the model-free continuous-time actor-critic algorithm for reflected diffusion control based on the theoretical results in Section \ref{sec:dpg-m}. To ensure that the parameterized neural networks are consistent with the conditions \eqref{eq:cube-hatV-boundary-condition} required by the theoretical framework, we introduce two boundary-aware approximation methods: a soft penalty formulation and a hard architectural constraint. We also analyze the discrepancy between the ideal continuous-time state process under the target policy and the state process generated in practical implementation by discretely sampled, piecewise-constant exploratory controls, and derive an upper bound for this approximation error.

\subsection{Algorithm design}
\label{sec:algorithm-design}
Unlike model-based control approaches, our continuous-time RL framework does not assume prior knowledge of the coefficients $b$ and $\sigma$ governing the state dynamics and the reward functions $r$ and $g$. Instead, we assume only that the agent can interact with the environment and collect trajectory data in the form of state-action-reward tuples.  Proposition \ref{prop:cube-dpg} and Theorem \ref{thm:cube-martingale} provide the theoretical underpinnings for a model-free actor-critic implementation: the martingale characterization enables critic learning from temporal-difference (TD) residuals, while the policy gradient theorem guides the actor update toward improved performance.

We introduce parametric function approximators $V_\theta:[0,T]\times Q\to\mathbb R,~\bar q_\psi:[0,T]\times Q\times A\to\mathbb R,~\mu_\phi:[0,T]\times Q\to A$ for the value function, the advantage-rate function, and the deterministic policy,
where $\theta$, $\psi$, and $\phi$ denote the respective parameter vectors. To avoid that the parameters $\theta,\phi$ for value network and policy network change too fast, we use target networks $\bar{\theta},\bar{\phi}$ updated by
\begin{align*}
\bar{\theta} \leftarrow \tau \theta+(1-\tau)\bar{\theta},\qquad \bar{\phi} \leftarrow \tau \phi + (1-\tau) \bar{\phi}.
\end{align*}
with $\tau\in(0,1)$ controlling the update rate. Following \cite{cheng25}, we enforce the normalization condition \eqref{eq:cube-hatq-zero} via the reparameterization
\begin{align*}
q_\psi(t,x,a):=\bar{q}_\psi(t,x,a)-\bar{q}_\psi\left(t,x,\mu_{\bar{\phi}}(t,x)\right),
\end{align*}
for the current policy $\mu_\phi$ when training the critic.

In our paper, two methods are adopted to address the Neumann boundary condition:
\begin{itemize}
\item \textbf{Soft constraint.}\quad The soft constraint approach uses a standard neural network $V_\theta(t,x)$ without architectural modifications, but adds the critic loss with a boundary penalty term. Specifically, we sample boundary points $\{(t^{(j)},x^{(j)})\}_{j=1}^{B_N}$ from the faces of the hypercube and penalize the squared normal derivatives of the value function:
\begin{align*}
\mathcal{L}_N(\theta)=\frac{1}{B_N}\sum_{j=1}^{B_N}\sum_{i\in  I_-(x^{(j)})\cup  I_+(x^{(j)})}\left|\partial_{x_i}V_\theta(t^{(j)},x^{(j)})\right|^2.
\end{align*}

\item \textbf{Hard constraint.}\quad For the hypercube domain, the Neumann boundary condition can be imposed exactly by constructing the value network using cosine Fourier features. Inspired by \cite{str25}, we embed the state coordinates through cosine functions of multiple frequencies, which naturally satisfy zero normal derivatives at the boundaries. The value network takes the form
\begin{align}\label{eq:hard-constraint-value}
V_{\theta}(t,x)=\bar{V}_{\theta}\bigg(t,&\cos\left(\pi  \frac{x_1-\underline{x}_1}{\overline{x}_1-\underline{x}_1}\right),\cos\left(\pi  \frac{x_2-\underline{x}_2}{\overline{x}_2-\underline{x}_2}\right),\cdots,\cos\left(\pi  \frac{x_d-\underline{x}_d}{\overline{x}_d-\underline{x}_d}\right),\nonumber
\\
&\cos\left(f_1\pi \frac{x_1-\underline{x}_1}{\overline{x}_1-\underline{x}_1}\right),\cos\left(f_1\pi  \frac{x_2-\underline{x}_2}{\overline{x}_2-\underline{x}_2}\right),\cdots,\cos\left(f_1\pi  \frac{x_d-\underline{x}_d}{\overline{x}_d-\underline{x}_d}\right),
\\
&\cos\left(f_M\pi\frac{x_1-\underline{x}_1}{\overline{x}_1-\underline{x}_1}\right),\cos\left(f_M\pi  \frac{x_2-\underline{x}_2}{\overline{x}_2-\underline{x}_2}\right),\cdots,\cos\left(f_M\pi  \frac{x_d-\underline{x}_d}{\overline{x}_d-\underline{x}_d}\right)\bigg) \nonumber,
\end{align}
where the integer frequencies $f_1,f_2,\cdots,f_M \in \mathbb{Z}_+$ are chosen in order to approximate the high frequency or multiscale data.
It is obvious that the neural network \eqref{eq:hard-constraint-value} incorporates the Neumann condition by the property of cosine function. 
\end{itemize}

Fix a time step $\Delta t$ and we get the time grid $0=t_0<t_1<\cdots<t_N=T$ with $\Delta t_k:=t_{k+1}-t_k=\Delta t$. In step $k$, the agent observes the state $x_k\in Q$, selects an exploration action $a_k\in A$,
and receives the next state $x_{k+1}\in Q$ together with the running reward $r_k$.
We store transitions of the form $(t_k,x_k,a_k,r_k)$
in a replay buffer $\mathcal{R}$. To encourage sufficient exploration of the state-action space, we adopt a perturbed version of the current deterministic policy:
\begin{align*}
a_k=\Pi_A\left(\mu_\phi(t_k,x_k)+\eta_k\right),\quad \eta_k\sim \mathcal N(0,\sigma_{\mathrm{exp}}^2 I_d),
\end{align*}
where $\sigma_{\mathrm{exp}}^2$ is a small variance to explore the behavior around $\mu_{\phi}(t_k,x_k)$, $I_d$ is the 
$d$-dimensional identity matrix, and $\Pi_A$ denotes projection onto the action set $A$ when needed. This exploration scheme injects Gaussian noise into the action, allowing the agent to explore the environment around the current policy and gather informative samples for learning the advantage-rate function.

We adopt the multi-step TD method when training the value network and the advantage-rate network. For any $L$-step sampled subsequence $\{(t_{k+\ell},x_{k+\ell},a_{k+\ell},r_{k+\ell})\}_{\ell=0}^L$ which is sampled from the replay buffer $\mathcal{R}$, we define the following martingale TD given by
\begin{align*}
\delta_{k,L}^{\theta,\psi,\bar\theta}&:=
V_\theta(t_k,x_k)-\sum_{\ell=0}^{L-1}e^{-\beta(t_{k+\ell}-t_k)}\left(
r_{k+\ell}-q_\psi(t_{k+\ell},x_{k+\ell},a_{k+\ell})\right)\Delta t
-e^{-\beta(t_{k+L}-t_k)}V_{\bar\theta}(t_{k+L},x_{k+L}),
\end{align*}
where $V_{\bar\theta}$ is the target value network. This residual is precisely the discrete-time approximation of the martingale condition from Theorem \ref{thm:cube-martingale}. After sampling a batch of $B$ such subsequences, we compute the martingale loss
\begin{align*}
\mathcal{L}_M(\theta,\psi):=\frac{1}{B}\sum_{j=1}^B\left|\delta_{k_j,L}^{\theta,\psi,\bar\theta}\right|^2.
\end{align*}
Minimizing this loss drives the critic toward satisfying the martingale characterization, thereby aligning the value and advantage-rate estimates with the underlying dynamics. To enforce the terminal condition $V^\phi(T,x)=g(x)$, we also add a terminal penalty
\begin{align*}
\mathcal{L}_T(\theta):=\frac1{B_T}\sum_{j=1}^{B_T}\left|V_\theta(T,x^{(j)})-g(x^{(j)})\right|^2,
\end{align*}
where the terminal states $\{x^{(j)}\}_{j=1}^{B_T}$ are sampled randomly from $Q$.

The overall critic loss is therefore a weighted combination of the martingale loss, the terminal penalty and, if using the soft constraint, the boundary penalty:
\begin{align*}
\mathcal{L}_{\mathrm{critic}}(\theta,\psi)=\mathcal{L}_M(\theta,\psi)+\lambda_T \mathcal L_T(\theta) + \lambda_N \mathcal{L}_N(\theta),
\end{align*}
with weights $\lambda_T>0$ and $\lambda_N>0$ (where $\lambda_N=0$ for the hard constraint). The critic parameters are then updated via semi-gradient descent:
\begin{align*}
\theta \leftarrow \theta - \alpha_\theta \partial_\theta \mathcal{L}_{\mathrm{critic}},
\qquad
\psi \leftarrow \psi - \alpha_\psi \partial_\psi \mathcal{L}_{\mathrm{critic}}.   
\end{align*}

Motivated by the deterministic policy gradient in Proposition \ref{prop:cube-dpg}, we update the actor parameters by ascending the following objective:
\begin{align*}
\mathcal{J}_{\mathrm{actor}}(\phi):=\frac{1}{B_A}\sum_{j=1}^{B_A}\bar{q}_\psi\left(t^{(j)},x^{(j)},\mu_\phi(t^{(j)},x^{(j)})\right) \Delta t,
\end{align*}
where $\{(t^{(j)},x^{(j)})\}_{j=1}^{B_A}$ are sampled from replay buffer $\mathcal{R}$ and the actor parameters are updated as
\begin{align*}
\phi \leftarrow \phi + \alpha_\phi\partial_\phi \mathcal{J}_{\mathrm{actor}}.    
\end{align*}
The full procedure of continuous-time deep deterministic policy gradient with reflection (CT-DDPG-R) on hypercube is described in Algorithm \ref{alg:ct-ddpg-r}.

\begin{algorithm}[ht]
\caption{CT-DDPG-R on the hypercube}
\label{alg:ct-ddpg-r}
\begin{algorithmic}[1]
\Require
Time step $\Delta t$, time grid $0=t_0<t_1<\cdots<t_K=T$, discount rate $\beta$, number of episodes $N$, policy net $\mu_\phi$, advantage-rate net $\bar{q}_\psi$, value net $V_\theta$, critic update frequency $f_{\mathrm{critic}}$, actor update frequency $f_{\mathrm{actor}}$, trajectory length $L$, exploration noise $\sigma_{\mathrm{exp}}^2$, soft update parameter $\tau$, learning rates $\alpha_\phi,\alpha_\psi,\alpha_\theta$, batch sizes $B,B_A,B_T,B_N$, loss weight $\lambda_T,\lambda_N$.
\State Initialize $\phi,\psi,\theta$, target $\bar{\theta}=\theta,\bar{\phi}=\phi$ and buffer $\mathcal{R}$
\For{$n=1,2,\dots,N$}
    \State Observe the initial state $x_0$, collect the transitions from a trajectory by exploratory action
    \begin{align*}
    a_{t_k}=\Pi_A\left(\mu_\phi(t_k,x_{t_k})+\eta_{t_k}\right),~\eta_k\sim\mathcal{N}(0,\sigma_{\mathrm{exp}}^2 I_d).    
    \end{align*}                 
    \For{$m=1,\ldots,f_{\mathrm{critic}}$}
        \State Sample a batch of $L$-step trajectories and compute the martingale loss $\mathcal{L}_M(\theta,\psi)$.
    
        \State Sample a batch of terminal points and compute the terminal penalty $\mathcal{L}_T(\theta)$.

        \State Sample a batch of boundary points and compute the boundary penalty $\mathcal{L}_N(\theta)$.

        \State Compute the critic loss $\mathcal{L}_{\mathrm{critic}}(\theta,\psi)=\mathcal{L}_M(\theta,\psi)+\lambda_T\mathcal{L}_T(\theta)+\lambda_N \mathcal{L}_N(\theta)$.

        \State Update $(\theta,\psi)$ by $\theta \leftarrow \theta - \alpha_\theta \partial_\theta \mathcal{L}_{\mathrm{critic}}$ and $\psi \leftarrow \psi - \alpha_\psi \partial_\psi\mathcal{L}_{\mathrm{critic}}$.

    \EndFor
    \State Sample a batch of states from $\mathcal{R}$ and compute the policy loss $\mathcal{J}_{\mathrm{actor}}(\phi)$.
            
    \State Update $\phi$ by $\phi \leftarrow \phi + \alpha_\phi \partial_\phi \mathcal{J}_{\mathrm{actor}}$.
    
    \State Update target networks: $\bar{\theta} \leftarrow \tau\theta+(1-\tau)\bar{\theta},~ \bar{\phi} \leftarrow \tau\phi+(1-\tau)\bar{\phi}$.
\EndFor
\end{algorithmic}
\end{algorithm}

\subsection{Accuracy of discretely sampled exploratory dynamics}
\label{sec:accuracy-exploration}

We now turn to quantify the discrepancy between the ideal continuous-time reflected dynamics governed by the deterministic policy $\mu_\phi$ and the dynamics actually executed during training. This analysis is crucial for understanding the approximation error introduced by the discrete-time implementation of our otherwise continuous-time algorithm. Let
\begin{align*}
\mathcal{G}:=\{(t_0,t_1,...,t_K):~0=t_0<t_1<\cdots<t_K=T\},\qquad |\mathcal{G}|:=\max_{0\le k\le K-1}(t_{k+1}-t_k)    
\end{align*}
denote a uniform time grid and its mesh size, respectively. For $t\in[t_k,t_{k+1})$, define the left-endpoint projection $\iota(t):=t_k$. To model the exploration noise injected by the algorithm, let $\{\zeta_k\}_{k\ge0}$ be i.i.d. random variables in $\mathbb{R}^d$
satisfying $\mathbb{E}[\zeta_k]=0, \mathbb{E}|\zeta_k|^2=M_\zeta<\infty$, and let $\sigma_{\mathrm{exp}}>0$ be the exploration scale.

\subsubsection{Extension of the probability space}
To accommodate the independent random exploration noise $\{\zeta_k\}_{k \geq 0}$ generated by the algorithm, we extend the underlying probability space accordingly. Let $(\Omega^\zeta,\mathcal{F}^\zeta,\mathbb{P}^\zeta)$ be a probability space supporting $(\zeta_k)_{k\ge0}$. Define the product space
$\bar{\Omega}:=\Omega\times\Omega^\zeta$, $\bar{\mathcal{F}}:=\mathcal{F}\otimes\mathcal{F}^\zeta$, $\bar{\mathbb{P}}:=\mathbb{P}\otimes\mathbb{P}^\zeta$. On the extended space $(\bar{\Omega},\bar{\mathcal{F}},\bar{\mathbb{P}})$, we set $\bar{W}_t(\omega,\omega^\zeta):=W_t(\omega)$, $\bar{\xi}_0(\omega,\omega^\zeta):=\xi_0(\omega)$, $\bar{\zeta}_k(\omega,\omega^\zeta):=\zeta_k(\omega^\zeta)$. For notational simplicity, we retain the original symbols $(W,\xi_0,\zeta_k)$ in place of $(\bar{W},\bar{\xi}_0,\bar{\zeta}_k)$. We equip the extended space with the enlarged filtration $\bar{\mathcal{F}}_t^{\mathcal{G}}:=\sigma(\xi_0,\,W_s:0\le s\le t)\vee\sigma(\zeta_k:t_k\le t)\vee\mathcal{N}$ for $t\in[0,T]$, where $\mathcal{N}$ denotes the family of $\bar{\mathbb{P}}$-null sets. By
independence, $W$ remains a Brownian motion with respect to $\{\bar{\mathcal{F}}_t^{\mathcal{G}}\}_{t\in[0,T]}$.

\subsubsection{Discretely executed exploratory reflected dynamics}

In the actual implementation, the agent updates its action only at the discrete grid points $t_k$. At each intervention time $t_k$, the algorithm samples the exploratory action
\begin{align*}
a_k^{\mathcal{G},\sigma_{\mathrm{exp}}}:= \Pi_A \left(\mu_\phi\left(t_k,X_{t_k}^{\mathcal{G},\sigma_{\mathrm{exp}}}\right)+\sigma_{\mathrm{ exp}}\zeta_k\right),   
\end{align*}
and holds it constant on $[t_k,t_{k+1})$, that is, $a_t^{\mathcal{G},\sigma_{\mathrm{ exp}}}=a_k^{\mathcal{G},\sigma_{\mathrm{exp}}}$ for $t\in[t_k,t_{k+1})$. This piecewise-constant action construction mirrors the actual execution of the algorithm, where the policy is evaluated and the action is updated at regular intervals of length $\Delta t$.

Consequently, the reflected state process under this discretely sampled exploratory policy satisfies
\begin{align}\label{eq:sampled-exploratory-rsde}
\begin{cases}
\begin{aligned}
\displaystyle 
\mathrm{d}X_t^{\mathcal{G},\sigma_{\mathrm{exp}}}&=b\left(t,X_t^{\mathcal{G},\sigma_{\mathrm{exp}}},a_t^{\mathcal{G},\sigma_{\mathrm{exp}}}\right)\mathrm{d}t +\sigma\left(t,X_t^{\mathcal{G},\sigma_{\mathrm{exp}}},a_t^{\mathcal{G},\sigma_{\mathrm{exp}}}\right)\mathrm{d}W_t
\\
&\qquad + \mathrm{d}K_t^{\mathcal{G},\sigma_{\mathrm{exp}},-}-\mathrm{d}K_t^{\mathcal{G},\sigma_{\mathrm{exp}},+},
\end{aligned}
\\
\displaystyle
X_0^{\mathcal{G},\sigma_{\mathrm{exp}}}=\xi_0.
\end{cases}
\end{align}
where the regulators $K^{\mathcal{G},\sigma_{\mathrm{exp}},\pm}$ enforce the reflection on the boundary of $Q$. In contrast, the ideal deterministic reflected dynamics, which would be obtained if the action could be updated continuously in time, is governed by
\begin{align*}
\begin{cases}
\mathrm{d}X_t^\phi=b\left(t,X_t^\phi,\mu_\phi(t,X_t^\phi)\right)\mathrm{d}t+ \sigma\left(t,X_t^\phi,\mu_\phi(t,X_t^\phi)\right)\mathrm{d}W_t+ \mathrm{d}K_t^{\phi,-}-\mathrm{d}K_t^{\phi,+},
\\
X_0^\phi=\xi_0.
\end{cases}
\end{align*}
Both the sampled and the ideal processes are defined on the same product space $(\bar\Omega,\bar{\mathcal F},\bar{\mathbb P})$. The following lemma ensures that the exploratory algorithm is well-defined at every step of the simulation, justifying the use of the piecewise-constant action approximation.

\begin{lemma}[Well-posedness of the exploratory dynamics]
\label{prop:sampled-rsde-wellposed}
Under the assumptions of Lemma \ref{lem:verification}, for every grid $\mathcal{G}$ and every $\sigma_{\mathrm{exp}} > 0$, the reflected SDE \eqref{eq:sampled-exploratory-rsde} admits a unique strong solution $\left(X^{\mathcal{G},\sigma_{\mathrm{exp}}},K^{\mathcal{G},\sigma_{\mathrm{exp}},-},K^{\mathcal{G},\sigma_{\mathrm{exp}},+}\right)$ adapted to $\{\bar{\mathcal{F}}_t^{\mathcal{G}}\}_{t\in[0,T]}$.
\end{lemma}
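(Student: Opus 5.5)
The plan is to build the solution forward in time, one grid interval at a time, by induction over $k=0,1,\dots,K-1$. On each interval the action is a fixed random variable, so the equation becomes a reflected SDE with a frozen random parameter in its coefficients. For each such step we will use the existence and uniqueness theory for RSDEs on $Q$ with Lipschitz coefficients: Theorem 2.5.4 in \cite{pi14}, applicable because $Q$ satisfies the uniform exterior sphere and cone conditions. Alternatively, we can use the Lipschitz continuity of the Skorokhod map $\Gamma_Q$ from \cite{du91}, as in Lemma \ref{lem:cube-stability}.

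\textbf{Induction hypothesis.} Suppose a unique strong solution has been constructed on $[0,t_k]$, adapted to $\{\bar{\mathcal F}^{\mathcal G}_t\}$, with $X_{t_k}^{\mathcal G,\sigma_{\mathrm{exp}}}\in Q$ being $\bar{\mathcal F}^{\mathcal G}_{t_k}$-measurable. For $k=0$ this holds since $X_0=\xi_0$.

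\textbf{The action on the next interval.} Define
\begin{align*}
a_k:=\Pi_A\big(\mu_\phi(t_k,X_{t_k})+\sigma_{\mathrm{exp}}\zeta_k\big).
\end{align*}
The map $\mu_\phi$ is continuous by Assumption \ref{ass:policy}, and $\Pi_A$ is the (1-Lipschitz) projection onto the convex compact set $A$. Hence $a_k$ is an $A$-valued random variable. Since $\zeta_k$ is $\bar{\mathcal F}^{\mathcal G}_{t_k}$-measurable by construction of the enlarged filtration, $a_k$ is $\bar{\mathcal F}^{\mathcal G}_{t_k}$-measurable. It is also bounded, which takes care of square integrability.

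\textbf{Solving on $[t_k,t_{k+1}]$.} On this interval we solve
\begin{align*}
X_t=X_{t_k}+\int_{t_k}^t b(s,X_s,a_k)\,ds+\int_{t_k}^t\sigma(s,X_s,a_k)\,dW_s+(K^-_t-K^-_{t_k})-(K^+_t-K^+_{t_k}).
\end{align*}
By Assumption \ref{ass:coefficient}, the coefficients $(s,x)\mapsto b(s,x,a_k(\omega))$ and $\sigma(s,x,a_k(\omega))$ are Lipschitz in $x$ and of linear growth, with constants uniform in $\omega$. $W$ is still a Brownian motion for the enlarged filtration, and the increments $(W_s-W_{t_k})_{s\ge t_k}$ are independent of $\bar{\mathcal F}^{\mathcal G}_{t_k}$. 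So this is a standard RSDE with random, $\bar{\mathcal F}^{\mathcal G}_{t_k}$-measurable initial data and parameter.

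The step I expect to be the main (if mild) obstacle is justifying this rigorously, i.e. handling the random frozen parameter $a_k$ rather than a deterministic one. I would avoid a conditioning argument and instead redo the Picard iteration directly: set
\begin{align*}
X^{(n+1)}=\Gamma_Q\Big(X_{t_k}+\int_{t_k}^\cdot b(s,X^{(n)}_s,a_k)\,ds+\int_{t_k}^\cdot\sigma(s,X^{(n)}_s,a_k)\,dW_s\Big),
\end{align*}
with $\Gamma_Q$ the Skorokhod map restarted at $t_k$. The Lipschitz property of $\Gamma_Q$ in the sup norm, together with the BDG and Gronwall inequalities, gives contraction estimates in $\mathbb E[\sup_{[t_k,u]}|\cdot|^2]$ exactly as in the proof of Lemma \ref{lem:cube-stability}; the Lipschitz constants do not depend on $a_k$. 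This yields existence and pathwise uniqueness on $[t_k,t_{k+1}]$. Adaptedness holds because each iterate is adapted: $\Gamma_Q$ is non-anticipative, and the inputs are $\bar{\mathcal F}^{\mathcal G}_t$-adapted. The regulators $K^\pm$ inherit continuity, monotonicity and the complementarity conditions from the Skorokhod problem.

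\textbf{Concatenation.} Finally we paste the pieces together. Continuity at $t_{k+1}$ holds by construction, and we define $K^\pm$ cumulatively. After $K$ steps this produces a strong solution on $[0,T]$.

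\textbf{Global uniqueness.} Any other strong solution $\tilde X$ must use the same action $a_0$ on $[0,t_1)$, since that action depends only on $\xi_0$ and $\zeta_0$; so it coincides with $X$ on $[0,t_1]$ by the uniqueness just proved. Then $\tilde X_{t_1}=X_{t_1}$, hence the same action $a_1$ is used, and inductively the two solutions agree on every interval, up to indistinguishability.
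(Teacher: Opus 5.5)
Your proposal is correct and follows the same route as the paper. Both arguments induct over the grid intervals, freeze the $\bar{\mathcal F}^{\mathcal G}_{t_k}$-measurable action $a_k$, and invoke well-posedness of a reflected SDE with coefficients that are Lipschitz in $x$ uniformly in $a$ on each $[t_k,t_{k+1}]$. Your Picard iteration through the Lipschitz Skorokhod map and your explicit pasting and global-uniqueness steps supply details that the paper's short proof leaves implicit, in particular the fact that the frozen parameter $a_k$ is random.
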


\begin{proof}
We proceed by induction over the grid intervals. The case at $t=0$ is immediate from the initial condition $X_0^{\mathcal{G},\sigma_{\mathrm{exp}}}=\xi_0\in\bar{\mathcal{F}}_0^{\mathcal{G}}$. Suppose that for some $k\in\mathbb{N}$, the reflected SDE \eqref{eq:sampled-exploratory-rsde} admits a unique strong solution on $[0,t_k]$.
Then we have $X_{t_k}^{\mathcal{G},\sigma_{\mathrm{exp}}}\in\bar{\mathcal{F}}_{t_k}^{\mathcal{G}}$, and
\begin{align*}
a_k^{\mathcal{G},\sigma_{\mathrm{exp}}}=\Pi_A\left(\mu_\phi(t_k,X_{t_k}^{\mathcal{G},\sigma_{\mathrm{exp}}})+\sigma_{\mathrm{exp}}\zeta_k\right)\in\bar{\mathcal{F}}_{t_k}^{\mathcal{G}}.    
\end{align*}
On $[t_k,t_{k+1})$, we consider the following reflected SDE
\begin{align*}
\begin{cases}
\mathrm{d}X_t=b(t,X_t,a_k^{\mathcal{G},\sigma_{\mathrm{exp}}})\mathrm{d}t + \sigma(t,X_t,a_k^{\mathcal{G},\sigma_{\mathrm{exp}}})\mathrm{d}W_t +
\mathrm{d}K_t^- - \mathrm{d}K_t^+,
\\
X_{t_k}=X_{t_k}^{\mathcal{G},\sigma_{\mathrm{exp}}}.
\end{cases}
\end{align*}
For fixed $a_k^{\mathcal{G},\sigma_{\mathrm{exp}}}$, it follows from Assumptions \ref{ass:coefficient} and \ref{ass:policy} that
\begin{align*}
|b(t,x,a_k^{\mathcal{G},\sigma_{\mathrm{exp}}})-b(t,y,a_k^{\mathcal{G},\sigma_{\mathrm{exp}}})|+|\sigma(t,x,a_k^{\mathcal{G},\sigma_{\mathrm{exp}}})-\sigma(t,y,a_k^{\mathcal{G},\sigma_{\mathrm{exp}}})|\le C|x-y|,
\end{align*}
for some constant $C>0$. Therefore, the reflected SDE admits a unique strong solution on $[t_k,t_{k+1})$.  This extends the solution to $[0,t_{k+1}]$, completing the induction. Hence the unique strong solution exists on the entire horizon $[0,T]$.
\end{proof}

\subsubsection{State process error}
We now analyze the discrepancy between the ideal continuous-time trajectory and the discretely sampled exploratory trajectory. This analysis is essential for quantifying the approximation error introduced by the piecewise-constant action implementation and the exploration noise.

For notational convenience, we introduce the shorthand $a_t^\phi:=\mu_\phi(t,X_t^\phi),~K_t^{\mathcal{G},\sigma_{\mathrm{exp}}}:=K_t^{\mathcal{G},\sigma_{\mathrm{exp}},-}-K_t^{\mathcal{G},\sigma_{\mathrm{exp}},+},~K_t^\phi:=K_t^{\phi,-}-K_t^{\phi,+}$ and define the state deviation process $\Delta_t:=X_t^{\mathcal{G},\sigma_{\mathrm{exp}}}-X_t^\phi$. 

\begin{theorem}
\label{thm:state-process-error}
Under the assumptions of Lemma \ref{prop:sampled-rsde-wellposed}, there exists a constant $C>0$, independent of
$\mathcal{G}$ and $\sigma_{\mathrm{exp}}$, such that
\begin{align}\label{eq:state-process-error}
\mathbb{E}\left[\sup_{0\le t\le T}\left|X_t^{\mathcal{G},\sigma_{\mathrm{exp}}}-X_t^\phi\right|^2\right]\le C\left(|\mathcal{G}|+\sigma_{\mathrm{exp}}^2 M_\zeta\right).
\end{align}
\end{theorem}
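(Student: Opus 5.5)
The approach is the one used for Lemma \ref{lem:cube-stability}: remove the reflection with the Skorokhod map, and estimate the unreflected processes by BDG and Gronwall. Define the free processes
\begin{align*}
Y_t^{\mathcal G}:=\xi_0+\int_0^t b(s,X_s^{\mathcal G,\sigma_{\mathrm{exp}}},a_s^{\mathcal G,\sigma_{\mathrm{exp}}})\mathrm ds+\int_0^t\sigma(s,X_s^{\mathcal G,\sigma_{\mathrm{exp}}},a_s^{\mathcal G,\sigma_{\mathrm{exp}}})\mathrm dW_s,
\end{align*}
and $Y_t^\phi$ analogously with $(X^\phi,a^\phi)$. Then $X^{\mathcal G,\sigma_{\mathrm{exp}}}=\Gamma_Q(Y^{\mathcal G})$ and $X^\phi=\Gamma_Q(Y^\phi)$. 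The map $\Gamma_Q$ is Lipschitz in the sup norm (\cite{du91}), and $W$ is still a Brownian motion for the enlarged filtration $\bar{\mathcal F}^{\mathcal G}$. Setting $F(u):=\mathbb E[\sup_{s\le u}|\Delta_s|^2]$, Jensen and BDG then give
\begin{align*}
F(u)\le C\int_0^u\mathbb E\left[|X_s^{\mathcal G,\sigma_{\mathrm{exp}}}-X_s^\phi|^2+|a_s^{\mathcal G,\sigma_{\mathrm{exp}}}-a_s^\phi|^2\right]\mathrm ds,
\end{align*}
where the Lipschitz property of $b,\sigma$ in $(x,a)$ from Assumption \ref{ass:coefficient} is used.

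Next, decompose the action error for $s\in[t_k,t_{k+1})$. The projection $\Pi_A$ onto the convex compact set $A$ is $1$-Lipschitz, and $\mu_\phi(s,X_s^\phi)\in A$, so $\Pi_A$ leaves it unchanged. Together with Assumption \ref{ass:policy} (with a fixed $\phi$, so $L_\Phi$ is a constant), this gives
\begin{align*}
|a_s^{\mathcal G,\sigma_{\mathrm{exp}}}-a_s^\phi|&\le|\mu_\phi(t_k,X_{t_k}^{\mathcal G,\sigma_{\mathrm{exp}}})-\mu_\phi(s,X_s^\phi)|+\sigma_{\mathrm{exp}}|\zeta_k|\\
&\le L\left(|s-t_k|+|\Delta_{t_k}|+|X^\phi_{t_k}-X^\phi_s|\right)+\sigma_{\mathrm{exp}}|\zeta_k|.
\end{align*}
Taking squares and expectations, we have $\mathbb E|\Delta_{t_k}|^2\le F(s)$, $|s-t_k|^2\le|\mathcal G|^2\le T|\mathcal G|$, and $\mathbb E|\zeta_k|^2=M_\zeta$. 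It remains to control $\mathbb E|X^\phi_s-X^\phi_{\iota(s)}|^2$.

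The main obstacle is this time-regularity bound for the reflected process, $\mathbb E|X_s^\phi-X_{\iota(s)}^\phi|^2\le C|\mathcal G|$. The reflection term $K^\phi$ is not obviously $L^2$-H\"older, so I would not estimate it directly. Instead I would use the Lipschitz property of the Skorokhod map applied to the path restarted at time $\iota(s)$. On $[\iota(s),s]$, $X^\phi$ is the Skorokhod image of $X^\phi_{\iota(s)}+(Y^\phi_\cdot-Y^\phi_{\iota(s)})$, and the constant path $X^\phi_{\iota(s)}\in Q$ is its own image. Lipschitz continuity then gives
\begin{align*}
\sup_{u\in[\iota(s),s]}|X^\phi_u-X^\phi_{\iota(s)}|\le C\sup_{u\in[\iota(s),s]}|Y^\phi_u-Y^\phi_{\iota(s)}|.
\end{align*}
Since $Q$ and $A$ are compact, $b$ and $\sigma$ are bounded along trajectories by the linear growth condition, so BDG bounds the right-hand side in $L^2$ by $C(|\mathcal G|^2+|\mathcal G|)\le C|\mathcal G|$.

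Combining these pieces gives $F(u)\le C\int_0^uF(s)\,\mathrm ds+C(|\mathcal G|+\sigma_{\mathrm{exp}}^2M_\zeta)$. Gronwall's inequality then yields \eqref{eq:state-process-error}, with $C$ depending only on $T$, the Lipschitz and growth constants, $L$, the diameters of $Q$ and $A$, and the Skorokhod constant, and not on $\mathcal G$ or $\sigma_{\mathrm{exp}}$. Before applying Gronwall, I would check that $F$ is finite; this holds because $Q$ is bounded. I would also check measurability of the grid-indexed terms, which is covered by Lemma \ref{prop:sampled-rsde-wellposed}.
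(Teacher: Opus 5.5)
Your proof is correct, but it handles the reflection differently from the paper.

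\textbf{The paper's route.} The paper does not use the Skorokhod map in this proof. It applies It\^o's formula to $|\Delta_t|^2$ and uses the sign property $\int_0^t\langle\Delta_s,\mathrm{d}K_s^{\mathcal{G},\sigma_{\mathrm{exp}}}-\mathrm{d}K_s^\phi\rangle\le 0$. This holds because each regulator pushes only along the inward normal of a face of the convex set $Q$. The martingale term is then handled by BDG and Young's inequality, absorbing $\frac12F(t)$ into the left-hand side. The modulus bound $\mathbb{E}|X_s^\phi-X_{\iota(s)}^\phi|^2\le C|\mathcal{G}|$ is obtained the same way: It\^o's formula for $|X_u^\phi-X_{\iota(s)}^\phi|^2$, the same sign property (valid since $X^\phi_{\iota(s)}\in Q$), and Gronwall.

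\textbf{Your route.} You remove the reflection pathwise with the Lipschitz Skorokhod map, exactly as in Lemma \ref{lem:cube-stability}. You do this for the global comparison and, by restarting at $\iota(s)$ against the constant path, for the time-regularity bound.

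\textbf{What each buys.} The paper's monotonicity argument is self-contained. It never needs a Lipschitz Skorokhod map, so it goes through verbatim for any convex domain with normal reflection. It does, however, depend on the reflection being normal. Your argument is a deterministic, pathwise reduction. It avoids It\^o calculus with the regulators and the absorption step, and it gives the slightly stronger bound on $\mathbb{E}\sup_{u\in[\iota(s),s]}|X_u^\phi-X_{\iota(s)}^\phi|^2$. It would also adapt to oblique reflection on polyhedral domains satisfying the Dupuis--Ishii conditions, where the sign property fails. The price is relying on that external Lipschitz result. For the hypercube with normal reflection this result is elementary, because the Skorokhod problem decouples into one-dimensional two-sided problems on $[\underline{x}_i,\overline{x}_i]$.
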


\begin{proof}
Define the differences in drift and diffusion coefficients as
\begin{align*}
\Delta b_t:=b(t,X_t^{\mathcal{G},\sigma_{\mathrm{exp}}},a_t^{\mathcal{G},\sigma_{\mathrm{exp}}})-b(t,X_t^\phi,a_t^\phi),\quad \Delta\sigma_t:=\sigma(t,X_t^{\mathcal{G},\sigma_{\mathrm{exp}}},a_t^{\mathcal{G},\sigma_{\mathrm{exp}}})-\sigma(t,X_t^\phi,a_t^\phi).
\end{align*}
Then the deviation process satisfies the SDE
\begin{align*}
\mathrm{d}\Delta_t=\Delta b_t\mathrm{d}t+\Delta\sigma_t\mathrm{d}W_t+
\mathrm{d}K_t^{\mathcal{G},\sigma_{\mathrm{exp}}}-\mathrm{d}K_t^\phi.
\end{align*}
Applying  It\^o's formula to $|\Delta_t|^2$ gives that
\begin{align*}
|\Delta_t|^2=2\int_0^t \langle \Delta_s,\Delta b_s\rangle \mathrm{d}s + \int_0^t |\Delta\sigma_s|^2\mathrm{d}s+ 2\int_0^t
\langle \Delta_s,\Delta\sigma_s \mathrm{d} W_s\rangle+ 2\int_0^t\langle \Delta_s, \mathrm{d}K_s^{\mathcal{G},\sigma_{\mathrm{exp}}}-\mathrm{d}K_s^\phi \rangle.
\end{align*}

We claim that $\int_0^t \langle  \Delta_s,\mathrm{d} K^{\mathcal{G},\sigma_{\mathrm{exp}}}_s-\mathrm{d} K^{\phi}_s \rangle \leq 0$. For the lower face, if $\mathrm{d}K_s^{\mathcal{G},\sigma_{\mathrm{exp}},-,i}>0$, we have $X_s^{\mathcal{G},\sigma_{\mathrm{exp}},i}=\underline{x}_i$ and then, $X_s^{\mathcal{G},\sigma_{\mathrm{exp}},i}-X_s^{\phi,i}=\underline{x}_i-X_s^{\phi,i}\le 0$. For the upper face, if $\mathrm{d}K_s^{\mathcal{G},\sigma_{\mathrm{exp}},+,i}>0$, we have $X_s^{\mathcal{G},\sigma_{\mathrm{exp}},i}=\overline{x}_i$, and then $-(X_s^{\mathcal{G},\sigma_{\mathrm{exp}},i}-X_s^{\phi,i})=X_s^{\phi,i}-\overline{x}_i \le 0$. Therefore, we obtain that $\int_0^t \langle X_s^{\mathcal{G},\sigma_{\mathrm{exp}}}-X_s^\phi,\mathrm{d}K_s^{\mathcal{G},\sigma_{\mathrm{exp}}}\rangle \le 0$.
An analogous argument applied to the ideal regulator $K^\phi$ gives $-\int_0^t \langle X_s^{\mathcal{G},\sigma_{\mathrm{exp}}}-X_s^\phi,\mathrm{d}K_s^\phi\rangle \le 0$.
Adding the two inequalities gives $\int_0^t \langle\Delta_s,\mathrm{d}K_s^{\mathcal{G},\sigma_{\mathrm{exp}}}-\mathrm{d}K_s^\phi\rangle \le 0$, which yields
\begin{align}\label{eq:ineq-Delta-t}
|\Delta_t|^2 \le 2\int_0^t \langle \Delta_s,\Delta b_s\rangle \mathrm{d}s +\int_0^t|\Delta\sigma_s|^2\mathrm{d}s+2\int_0^t
\langle \Delta_s,\Delta\sigma_s \mathrm{d}W_s\rangle.
\end{align}
By the Lipschitz condition in Assumption \ref{ass:coefficient}, $|\Delta b_s|+|\Delta\sigma_s| \le C(|\Delta_s|+|a_s^{\mathcal{G},\sigma_{\mathrm{exp}}}-a_s^\phi|)$ for some constant $C>0$. Therefore, $2\langle\Delta_s,\Delta b_s\rangle+|\Delta\sigma_s|^2 \le C|\Delta_s|^2+
C|a_s^{\mathcal{G},\sigma_{\mathrm{exp}}}-a_s^\phi|^2$. Let $F(t):=\mathbb{E}[\sup_{0\le u\le t}|\Delta_u|^2]$.
Taking expectations and supremum on both sides of \eqref{eq:ineq-Delta-t}, and applying the BDG inequality and Young's inequality, we obtain
\begin{align*}
F(t)&\le C\int_0^t \mathbb{E}|\Delta_s|^2\mathrm{d}s+C\int_0^t
\mathbb{E}|a_s^{\mathcal{G},\sigma_{\mathrm{exp}}}-a_s^\phi|^2\mathrm{d}s+C\mathbb{E}\left[\left(\int_0^t|\Delta_s|^2|\Delta\sigma_s|^2\mathrm{d}s
\right)^{1/2}\right]
\\
&\le \frac{1}{2} F(t)+C\int_0^tF(s)\mathrm{d}s+C\int_0^t
\mathbb{E}|a_s^{\mathcal{G}\sigma_{\mathrm{exp}}}-a_s^\phi|^2\mathrm{d}s.
\end{align*}
Thus, it follows that
\begin{align}\label{eq:F-before-action}
F(t)\le C\int_0^t F(s)\mathrm{d}s+C\int_0^t\mathbb{E}|a_s^{\mathcal{G},\sigma_{\mathrm{exp}}}-a_s^\phi|^2\mathrm{d}s.
\end{align}

It remains to bound the action discrepancy. For $s\in[t_k,t_{k+1})$, since $\Pi_A$ is nonexpansive and $\Pi_A(a_s^\phi)=a_s^\phi$, we have
\begin{align*}
|a_s^{\mathcal{G},\sigma_{\mathrm{exp}}}-a_s^\phi|&= \left|\Pi_A\left(\mu_\phi(t_k,X_{t_k}^{\mathcal{G},\sigma_{\mathrm{exp}}})+\sigma_{\mathrm{exp}}\zeta_k\right)-\Pi_A\left(\mu_\phi(s,X_s^\phi)\right)\right|
\\
&\le\left|\mu_\phi(t_k,X_{t_k}^{\mathcal{G},\sigma_{\mathrm{exp}}})-\mu_\phi(s,X_s^\phi)+\sigma_{\mathrm{exp}}\zeta_k\right|
\\
&\le C\left(|X_{t_k}^{\mathcal{G},\sigma_{\mathrm{exp}}}-X_{t_k}^\phi|+|X_{t_k}^\phi-X_s^\phi|+|s-t_k|+\sigma_{\mathrm{exp}}|\zeta_k|\right)
\\
&\le C\left( \sup_{0\le u\le s}|\Delta_u| +|X_{\iota(s)}^\phi-X_s^\phi|+|\mathcal{G}|+\sigma_{\mathrm{exp}}|\zeta_k|\right).
\end{align*}
For the second term on the right side, we apply It\^o's formula and obtain
\begin{align*}
|X_{s}^\phi - X_{\iota(s)}^\phi|^2&= 2 \int_{\iota(s)}^s \langle X_u^\phi - X_{\iota(s)}^\phi, b(u, X_u^\phi, a_u^\phi) \rangle \mathrm{d}u + \int_{\iota(s)}^s \mathrm{Tr}\left(\sigma\sigma^\top(u, X_u^\phi, a_u^\phi)\right) \mathrm{d}u \nonumber
\\
& + 2 \int_{\iota(s)}^s \langle X_u^\phi - X_{\iota(s)}^\phi, \sigma(u, X_u^\phi, a_u^\phi) \mathrm{d} W_u \rangle + 2 \int_{\iota(s)}^s \langle X_u^\phi - X_{\iota(s)}^\phi, \mathrm{d}K_u^{\phi,-} - \mathrm{d} K_u^{\phi,+} \rangle.
\end{align*}
By the same argument as before, we know that the reflection term is non-positive, that is
\begin{align*}
\int_{\iota(s)}^s \langle X_u^\phi - X_{\iota(s)}^\phi, \mathrm{d}K_u^{\phi,-} - \mathrm{d} K_u^{\phi,+} \rangle \le 0.
\end{align*}
Then we obtain
\begin{align*}
\mathbb{E}|X_s^\phi - X_{\iota(s)}^\phi|^2 &\le \mathbb{E} \int_{\iota(s)}^s \left( |X_u^\phi - X_{\iota(s)}^\phi|^2 + |b(u, X_u^\phi, a_u^\phi)|^2 + \mathrm{Tr}\left(\sigma\sigma^\top(u, X_u^\phi, a_u^\phi)\right) \right) \mathrm{d}u 
\\
&\le \int_{\iota(s)}^s \mathbb{E}|X_u^\phi - X_{\iota(s)}^\phi|^2 \mathrm{d}u + M(s-\iota(s)),
\end{align*}
where $M:=\sup_{t \in [0,T],x\in Q,a\in A}(|b(t,x,a)|^2+\mathrm{Tr}(\sigma\sigma^\top(t,x,a)))$. By Gronwall's inequality, we conclude that $\mathbb{E}|X_{\iota(s)}^\phi - X_s^\phi|^2 \le C|\iota(s)-s| \leq C|\mathcal{G}|$
for some constant $C>0$ independent of $\phi$ and $\mathcal{G}$.

Thus, we have
\begin{align*}
\mathbb{E}|a_s^{\mathcal{G},\sigma_{\mathrm{exp}}}-a_s^\phi|^2&\le C F(s) + C\mathbb{E}|X_{\iota(s)}^\phi-X_s^\phi|^2 +C|\mathcal{G}|^2+C\sigma_{\mathrm{exp}}^2\mathbb{E}|\zeta_k|^2
\\
&\le C F(s)+C|\mathcal{G}|+C\sigma_{\mathrm{exp}}^2 M_\zeta.
\end{align*}
Substituting this bound into \eqref{eq:F-before-action} yields
\begin{align*}
F(t) \le C\int_0^tF(s)\mathrm{d}s+C\int_0^t\left(|\mathcal{G}|+\sigma_{\mathrm{exp}}^2M_\zeta
\right)\mathrm{d}s \le C\int_0^tF(s)\mathrm{d}s+C\left(|\mathcal{G}|+\sigma_{\mathrm{exp}}^2M_\zeta\right).
\end{align*}
By Gronwall's inequality, we have $F(T)\le C\left(|\mathcal{G}|+\sigma_{\mathrm{exp}}^2M_\zeta\right)$, which is precisely the desired error bound \eqref{eq:state-process-error}.
\end{proof}

\subsubsection{Objective error}
We now translate the state error bound into a bound on the difference between the ideal and the actually executed objective values. Recall that the ideal objective under the deterministic policy is
\begin{align*}
J(\phi):=\mathbb{E}\left[\int_0^T e^{-\beta t}r(t,X_t^\phi,a_t^\phi)\mathrm{d}t+e^{-\beta T}g(X_T^\phi)\right],
\end{align*}
while the objective achieved by the discretely sampled exploratory policy is
\begin{align*}
J_{\mathcal{G},\sigma_{\mathrm{exp}}}(\phi):=\mathbb{E}\left[\int_0^T e^{-\beta t}r(t,X_t^{\mathcal{G},\sigma_{\mathrm{exp}}},a_t^{\mathcal{G},\sigma_{\mathrm{exp}}})\mathrm{d}t+e^{-\beta T}g(X_T^{\mathcal{G},\sigma_{\mathrm{exp}}})\right].
\end{align*}

\begin{corollary}
\label{cor:objective-error}
Under the assumptions of Lemma \ref{prop:sampled-rsde-wellposed}, there exists a constant $C>0$, independent of $\mathcal{G}$ and $\sigma_{\mathrm{exp}}$, such that
\begin{align*}
|J_{\mathcal{G},\sigma_{\mathrm{exp}}}(\phi)-J(\phi)| \le C\left(|\mathcal{G}|^{1/2}+\sigma_{\mathrm{exp}}M_\zeta^{1/2}\right).
\end{align*}
\end{corollary}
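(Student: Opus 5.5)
We obtain the corollary from Theorem \ref{thm:state-process-error} via the Lipschitz continuity of $r$ and $g$ in Assumption \ref{ass:coefficient}, followed by Cauchy--Schwarz. We write the difference of the objectives as
\begin{align*}
J_{\mathcal{G},\sigma_{\mathrm{exp}}}(\phi)-J(\phi)=\mathbb{E}\left[\int_0^T e^{-\beta t}\left(r(t,X_t^{\mathcal{G},\sigma_{\mathrm{exp}}},a_t^{\mathcal{G},\sigma_{\mathrm{exp}}})-r(t,X_t^\phi,a_t^\phi)\right)\mathrm{d}t\right]+e^{-\beta T}\mathbb{E}\left[g(X_T^{\mathcal{G},\sigma_{\mathrm{exp}}})-g(X_T^\phi)\right].
\end{align*}
We bound $e^{-\beta t}\le 1$. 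By the Lipschitz property, the running term is then bounded by $C\int_0^T\mathbb{E}\left(|\Delta_t|+|a_t^{\mathcal{G},\sigma_{\mathrm{exp}}}-a_t^\phi|\right)\mathrm{d}t$, and the terminal term by $C\,\mathbb{E}|\Delta_T|$.

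For the terminal term, Jensen's inequality together with Theorem \ref{thm:state-process-error} gives
\begin{align*}
\mathbb{E}|\Delta_T|\le\left(\mathbb{E}\sup_{t}|\Delta_t|^2\right)^{1/2}\le C\left(|\mathcal{G}|+\sigma_{\mathrm{exp}}^2M_\zeta\right)^{1/2}.
\end{align*}
The same bound controls $\int_0^T\mathbb{E}|\Delta_t|\,\mathrm{d}t$.

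For the action term, we reuse the intermediate estimate from the proof of Theorem \ref{thm:state-process-error}:
\begin{align*}
\mathbb{E}|a_s^{\mathcal{G},\sigma_{\mathrm{exp}}}-a_s^\phi|^2\le C F(s)+C|\mathcal{G}|+C\sigma_{\mathrm{exp}}^2M_\zeta .
\end{align*}
Combining this with $F(s)\le F(T)\le C(|\mathcal{G}|+\sigma_{\mathrm{exp}}^2M_\zeta)$ and applying Jensen once more, we get $\int_0^T\mathbb{E}|a_s^{\mathcal{G},\sigma_{\mathrm{exp}}}-a_s^\phi|\,\mathrm{d}s\le C(|\mathcal{G}|+\sigma_{\mathrm{exp}}^2M_\zeta)^{1/2}$.

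Summing the three contributions and using $\sqrt{u+v}\le\sqrt u+\sqrt v$ yields $|J_{\mathcal{G},\sigma_{\mathrm{exp}}}(\phi)-J(\phi)|\le C(|\mathcal{G}|^{1/2}+\sigma_{\mathrm{exp}}M_\zeta^{1/2})$. The constant $C$ depends only on $T$, the Lipschitz constants in Assumptions \ref{ass:coefficient} and \ref{ass:policy}, and the constant from the theorem, so it is independent of $\mathcal{G}$ and $\sigma_{\mathrm{exp}}$.

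The only point needing care is the action discrepancy. Theorem \ref{thm:state-process-error} does not state that bound, so we have to extract it from its proof. That extraction relies on the nonexpansiveness of $\Pi_A$, on the Lipschitz continuity of $\mu_\phi$ in $(t,x)$, and on the $O(|\mathcal{G}|)$ estimate for $\mathbb{E}|X_s^\phi-X_{\iota(s)}^\phi|^2$ established there. Everything else is routine.
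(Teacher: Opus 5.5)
Your proposal is correct and follows the paper's proof essentially step for step. Like the paper, you use the Lipschitz bounds on $r$ and $g$, apply Jensen to the state estimate of Theorem \ref{thm:state-process-error}, and reuse the intermediate bound $\mathbb{E}|a_s^{\mathcal{G},\sigma_{\mathrm{exp}}}-a_s^\phi|^2\le CF(s)+C|\mathcal{G}|+C\sigma_{\mathrm{exp}}^2M_\zeta$ from that theorem's proof to control the action discrepancy.
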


\begin{proof}
By Lipschitz continuity of $r$ in $(x,a)$ and $g$ in $x$, it follows that
\begin{align*}
|J_{\mathcal{G},\sigma_{\mathrm{exp}}}(\phi)-J(\phi)|\le C\int_0^T\mathbb{E}\left[|X_t^{\mathcal{G},\sigma_{\mathrm{exp}}}-X_t^\phi|
+|a_t^{\mathcal{G},\sigma_{\mathrm{exp}}}-a_t^\phi|\right]\mathrm{d}t+C\mathbb{E}|X_T^{\mathcal{G},\sigma_{\mathrm{exp}}}-X_T^\phi|,
\end{align*}
for some constant $C>0$. Applying Theorem \ref{thm:state-process-error} gives 
\begin{align*} \mathbb{E}|X_t^{\mathcal{G},\sigma_{\mathrm{exp}}}-X_t^\phi| \le\left(\mathbb{E}|\Delta_t|^2\right)^{1/2}\le C\left(|\mathcal{G}|^{1/2}+\sigma_{\mathrm{exp}}M_\zeta^{1/2}\right).
\end{align*} 
Similarly, from the proof of Theorem \ref{thm:state-process-error}, we have
\begin{align*}
\mathbb{E}|a_t^{\mathcal{G},\sigma_{\mathrm{exp}}}-a_t^\phi|^2 \le CF(t)+C|\mathcal{G}|+C\sigma_{\mathrm{exp}}^2 M_\zeta \le C\left(|\mathcal{G}|+\sigma_{\mathrm{exp}}^2 M_\zeta\right),
\end{align*}
and hence $\mathbb{E}|a_t^{\mathcal{G},\sigma_{\mathrm{exp}}}-a_t^\phi| \le C(|\mathcal{G}|^{1/2}+\sigma_{\mathrm{exp}}M_\zeta^{1/2})$.
Combining these estimates yields the desired objective error bound.
\end{proof}

Corollary \ref{cor:objective-error} provides a quantitative guarantee that the performance of the discretely executed exploratory policy converges to that of the ideal deterministic policy as the time grid becomes finer ($|\mathcal G|\to0$) and the exploration noise vanishes ($\sigma_{\mathrm{exp}}\to 0$). The error bound reveals a trade-off inherent in the algorithm design: while exploration noise is necessary for learning the advantage-rate function in an off-policy manner, it introduces a bias in the objective relative to the ideal deterministic policy. Furthermore, this error analysis justifies the use of piecewise-constant action approximations in the implementation of the continuous-time algorithm. The bound assures us that the discrepancy between the theoretical ideal and the practical implementation is controlled by quantities that can be made arbitrarily small at the agent's discretion, which provides a rigorous foundation for the effectiveness of the CT-DDPG-R algorithm.

\section{Numerical Experiments}
\label{sec:application}

In this section, we execute the proposed CT-DDPG-R algorithm on a family of reflected reservoir control problems (\cite{pl75}, \cite{lai22}, \cite{luo25}). Reservoir systems provide a natural class of constrained continuous-time control tasks: the state variables represent water-level deviations from desired levels, and the admissible state region is a bounded box imposed by physical storage capacities and safety limits. We compare the soft constraint, hard constraint and the additional unconstrained case of the Neumann boundary condition in the value approximation.

\subsection{One-dimensional reservoir with an explicit optimal solution}
\label{subsec:1d}
We first consider a one-dimensional reservoir control problem for which the optimal value function and optimal policy admit an explicit analytical solution. This experiment serves as a valuable test for validating the proposed algorithm, as it allows us to compare the learned approximations against the ground truth.

Let $X_t\in[-H,H]$ denote the deviation of a reservoir level from its target level with $H>0$, and let $a_t\in[-A,A]$ be the regulation rate (i.e., the rate at which water is released or retained) with $A>0$. The reflected state dynamic is governed by
\begin{align*}
\mathrm{d}X_t=a_t\mathrm{d}t+\sigma\mathrm{d}W_t+\mathrm{d}K_t^- - \mathrm{d}K_t^+,\quad X_t\in[-H,H],
\end{align*}
where the local time processes $K^-$ and $K^+$ increase only when the process $X$ hits the lower boundary $-H$ and the upper boundary $H$ respectively. Our goal is to maximize the discounted reward
\begin{align*}
V^*(t,x)=\sup_{a\in{\mathcal{A}}} \mathbb{E}_{t,x}\left[\int_t^T e^{-\beta(s-t)} r(X_s,a_s)\mathrm{d}s+e^{-\beta(T-t)}g(X_T)\right].
\end{align*}
Here the running and terminal rewards  are given by $r(x)=-\eta x^2$, $g(x)=-\kappa x^2$ with $\eta,\kappa>0$, which penalize deviations from the target water level during operation and at the terminal time, and ${\mathcal{A}}$ is the admissible set of all $\mathbb{F}$-adapted processes $a=\{a_s\}_{s\in [0,T]}$ taking values in $[-A,A]$.

For this problem, the associated HJB equation takes the form
\begin{align*}
\begin{cases}
\displaystyle\partial_t V^*-\beta V^*+\frac{\sigma^2}{2}\partial_{xx}V^*-\eta x^2 +\sup_{a\in[-A,A]}\left(a\partial_xV^*\right)=0,&(t,x)\in[0,T)\times(-H,H),\\
\displaystyle \partial_xV^*(t,-H)=\partial_xV^*(t,H)=0,&t\in[0,T),\\
\displaystyle V^*(T,x)=-\kappa x^2,&x\in[-H,H].
\end{cases}
\end{align*}
By symmetry, $V^*(t,x)$ is even in $x$. Since the value decreases as $|x|$ increases, we have
$\partial_xV^*(t,x) \leq 0$ for $x>0$, and
$\partial_xV^*(t,x) \geq 0$ for $x<0$.
Therefore, the optimal policy has a bang-bang form:
\begin{align}\label{eq:1d-opt-policy}
\mu^*(t,x)=\argmax_{a\in[-A,A]} \left(a\partial_xV^*(t,x)\right)=-A\mathrm{sgn}(x).
\end{align}
Thus the optimal controller pushes the reservoir level toward the target level with maximal admissible intensity. Let $y=|x|$, $\tau=T-t$, and write $V^*(t,x)=-C(\tau,y)$. Under the bang-bang control \eqref{eq:1d-opt-policy}, the transformed function $C$ satisfies the following linear  parabolic equation on the half-interval $[0,H]$
\begin{align*}
\begin{cases}
\displaystyle \partial_\tau C = \frac{\sigma^2}{2}\partial_{yy}C-A\partial_yC -\beta C+\eta y^2,&(\tau,y)\in(0,T]\times(0,H),\\
\displaystyle \partial_yC(\tau,0)=\partial_yC(\tau,H)=0,&\tau\in(0,T],\\
\displaystyle C(0,y)=\kappa y^2,&y\in[0,H].
\end{cases}
\end{align*}
This equation is linear and can be solved explicitly via eigenfunction expansion. Define the constants $\alpha=\frac{A}{\sigma^2}$, $k_n=\frac{n\pi}{H}$ for $n\ge1$. The eigenfunctions of the operator $\mathcal{L} f =\frac{\sigma^2}{2}f''-A f'$ with Neumann boundary conditions $f'(0)=f'(H)=0$ are $\phi_0(y)=1$, and $\phi_n(y)=e^{\alpha y}[\cos(k_ny)-\frac{\alpha}{k_n}\sin(k_ny)]$ for $n\ge1$, with corresponding eigenvalues $\lambda_0=0$, $\lambda_n=\frac{A^2}{2\sigma^2}+\frac{\sigma^2k_n^2}{2}$ for $n\ge1$. Let
\begin{align*}
f_n=\frac{
\displaystyle\int_0^H e^{-2\alpha y}y^2\phi_n(y)\mathrm{d}y
}{
\displaystyle\int_0^H e^{-2\alpha y}\phi_n^2(y)\mathrm{d}y
},
\quad n\ge0.
\end{align*}
Then the expansion $y^2=\sum_{n\ge0}f_n\phi_n(y)$ holds, and the solution of the transformed PDE is given by
\begin{align*}
C(\tau,y)=\sum_{n=0}^{\infty}\left[e^{-(\beta+\lambda_n)\tau}\kappa f_n+\eta \frac{1-e^{-(\beta+\lambda_n)\tau}}{\beta+\lambda_n}f_n\right]\phi_n(y).
\end{align*}
Thus, by a standard verification argument, the optimal value function admits the explicit series representation
\begin{align}\label{eq:1d-vstar}
V^*(t,x)=-\sum_{n=0}^{\infty}\left[e^{-(\beta+\lambda_n)(T-t)}\kappa f_n+\eta\frac{1-e^{-(\beta+\lambda_n)(T-t)}}{\beta+\lambda_n}f_n\right]\phi_n(|x|).
\end{align}

In the numerical experiment, the infinite series in \eqref{eq:1d-vstar} is approximated by the first $150$ eigenfunctions, and the coefficients $f_n$ are computed by numerical quadrature on $4000$ grid points, ensuring sufficient accuracy for the ground truth. The parameters are $T=1,\beta=0.8,\sigma=0.3,H=1,A=1,\kappa=1,\eta=5$. The value network, advantage-rate network, and policy network are fully connected neural networks with three hidden layers, $64$ hidden units per layer and hyperbolic tangent activations. The actor output is bounded by $A\tanh(\cdot)$. The hard value network uses the cosine frequencies $\{1,2,3,4,6,10,14\}$ for the rescaled state variable. We train for $3000$ episodes with $\Delta t=0.01$ with Gaussian exploration noise decaying from $0.5$ to $0.2$. The deterministic return is evaluated every $50$ episodes using $100$ rollout paths.

Figure \ref{fig:1d-returns-residual} reports the returns without exploration and the boundary residual during the updating process. We can see a clear separation among the three boundary treatments in terms of the Neumann boundary residual which confirms that both boundary-aware treatments effectively encode the structure of the value network. It is worth noting that the unconstrained neural networks, due to strong expression power, are still able to recover the value function and the optimal bang-bang policy with reasonable accuracy; see Figures \ref{fig:1d-value-policy}. Nevertheless, we find that the unconstrained training process exhibits more pronounced oscillations and occasional degradation. In contrast, the soft and hard boundary treatments provide additional structural information and therefore lead to more stable learning dynamics in many runs.

At the same time, this experiment also shows a limitation of imposing the boundary condition that the learning process may quickly drive the policy toward the boundary and remain trapped there for a long period of training, especially under the hard constraint. This issue is no longer observed in the subsequent reservoir experiments, where the running reward contains a logarithmic boundary penalty. In favor of the boundary penalty, as the dimension of the state space increases and the value function and policy become more complex, the benefit of incorporating the Neumann constraints becomes more visible and they lead to faster and more stable convergence than the unconstrained baseline.

\begin{figure}[h]
    \centering
    \begin{subfigure}[b]{.45\linewidth}
        \centering
        \includegraphics[scale=0.45]{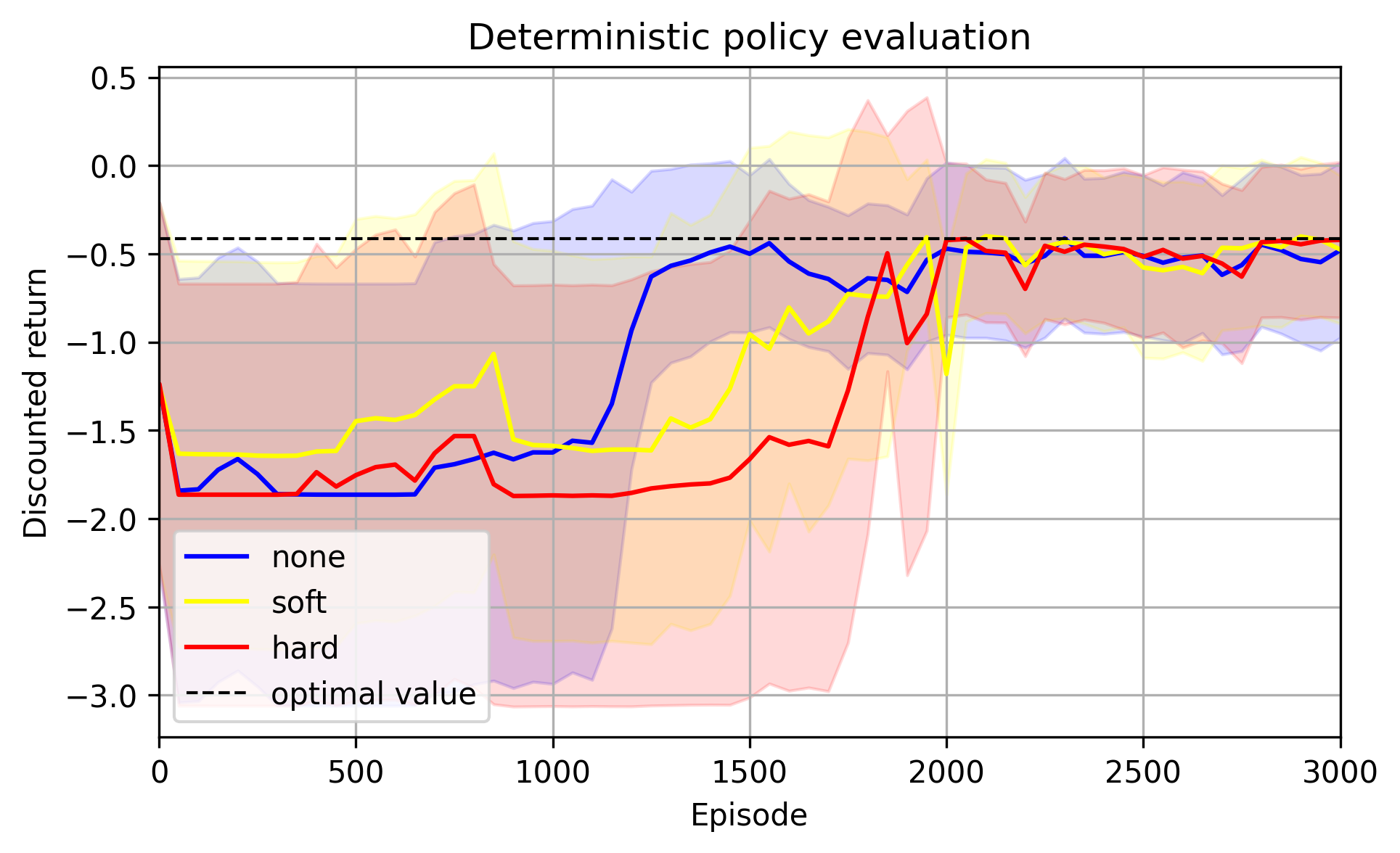}
    \end{subfigure}
    \begin{subfigure}[b]{.45\linewidth}
        \centering
        \includegraphics[scale=0.45]{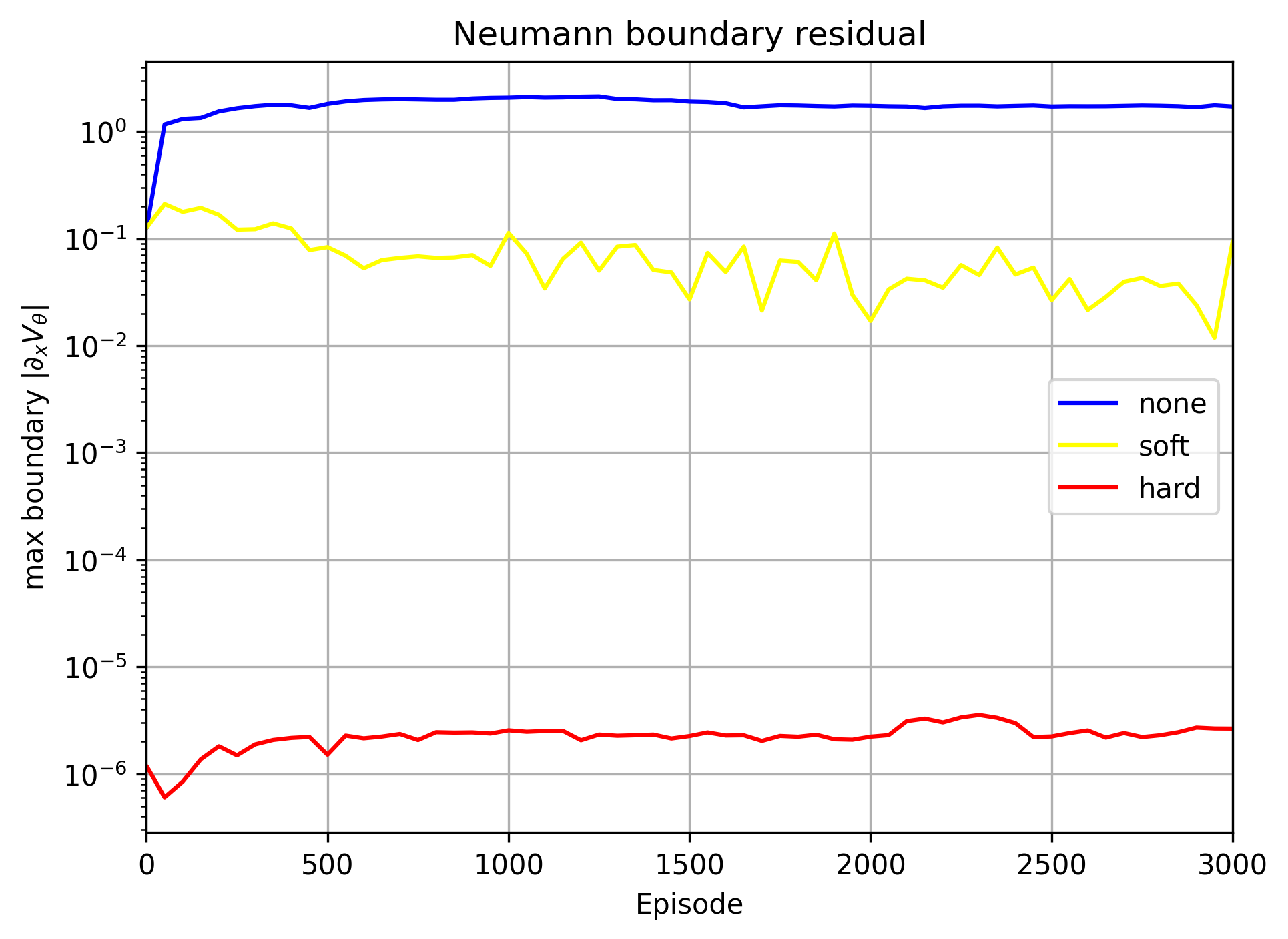}
    \end{subfigure}
    \caption{Returns and boundary residual}
    \label{fig:1d-returns-residual}
\end{figure}

\begin{figure}[h]
    \centering
    \begin{subfigure}[b]{.3\linewidth}
        \centering
        \includegraphics[width=\linewidth]{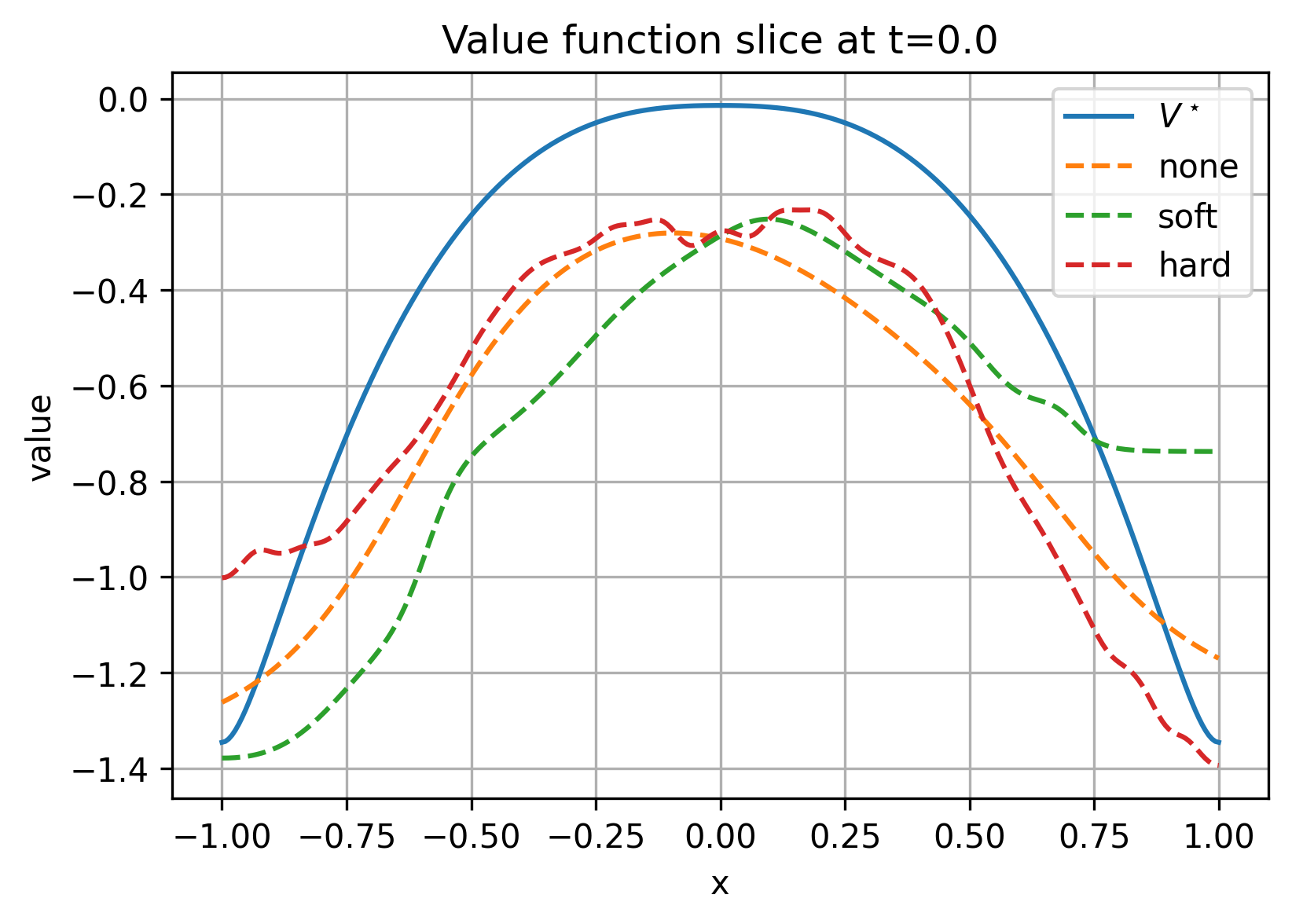}
    \end{subfigure}
    \begin{subfigure}[b]{.3\linewidth}
        \centering
        \includegraphics[width=\linewidth]{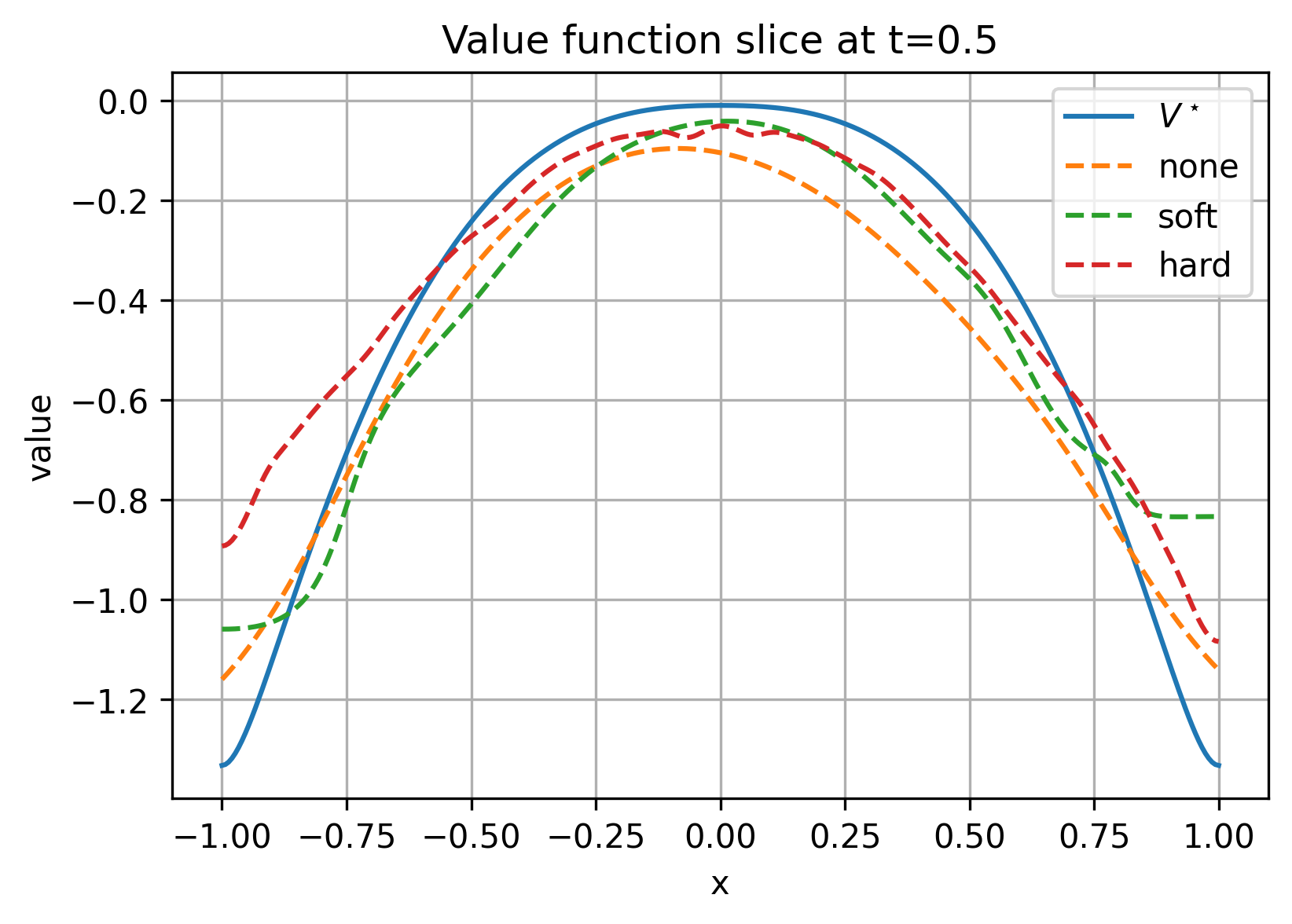}
    \end{subfigure}
    \begin{subfigure}[b]{.3\linewidth}
        \centering
        \includegraphics[width=\linewidth]{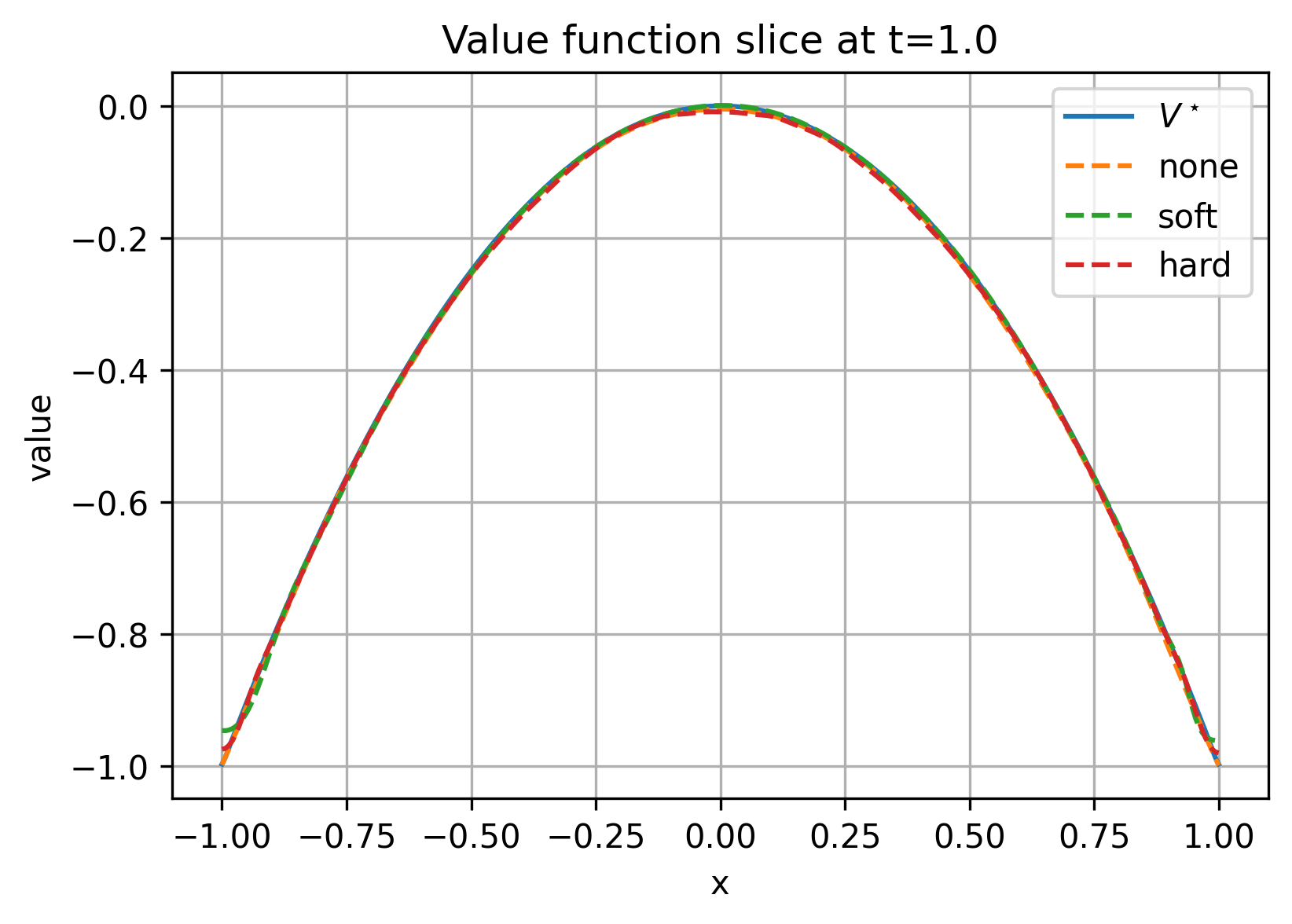}
    \end{subfigure}
    
    \vspace{1ex} 
    
    \begin{subfigure}[b]{.3\linewidth}
        \centering
        \includegraphics[width=\linewidth]{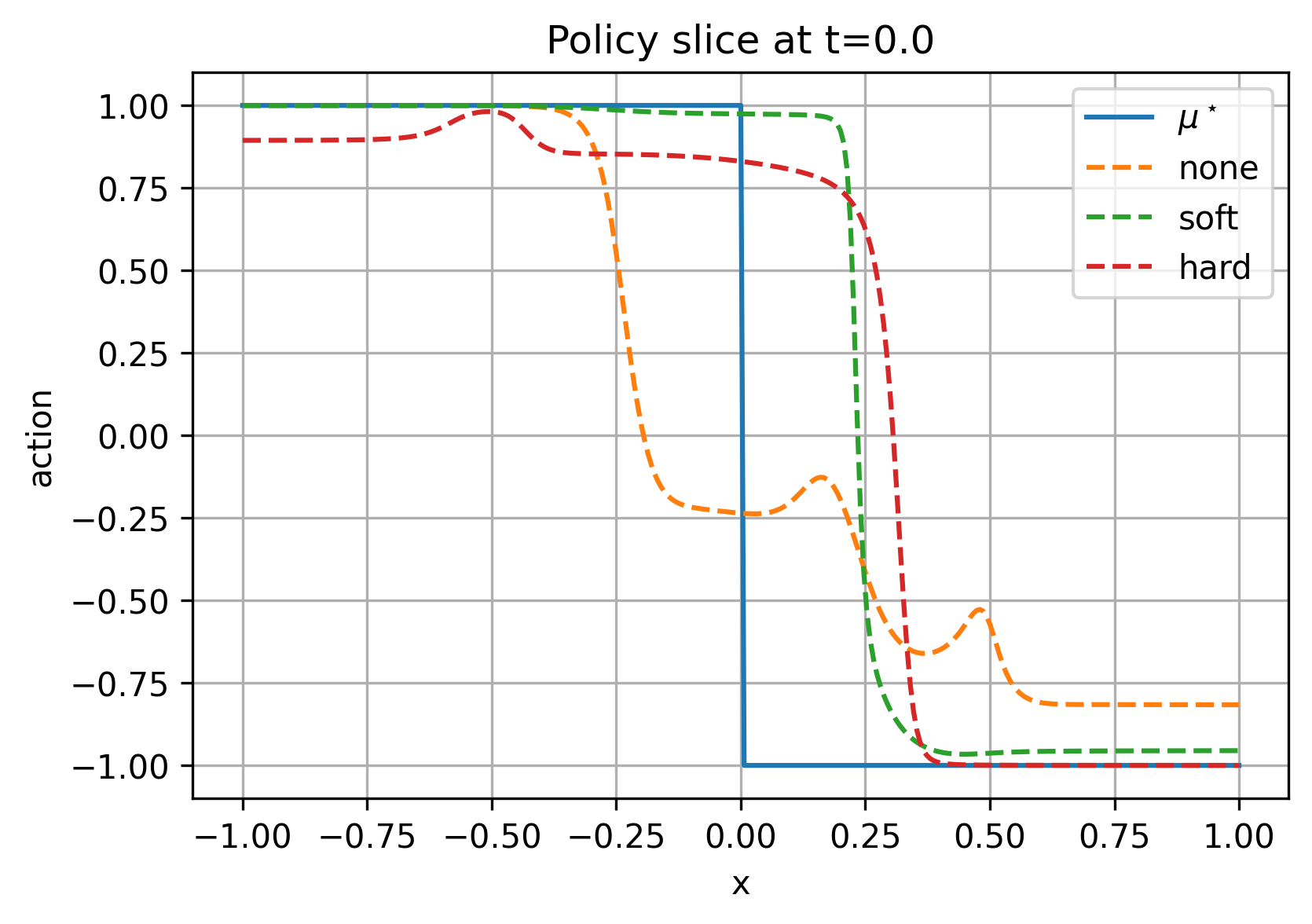}
    \end{subfigure}
    \begin{subfigure}[b]{.3\linewidth}
        \centering
        \includegraphics[width=\linewidth]{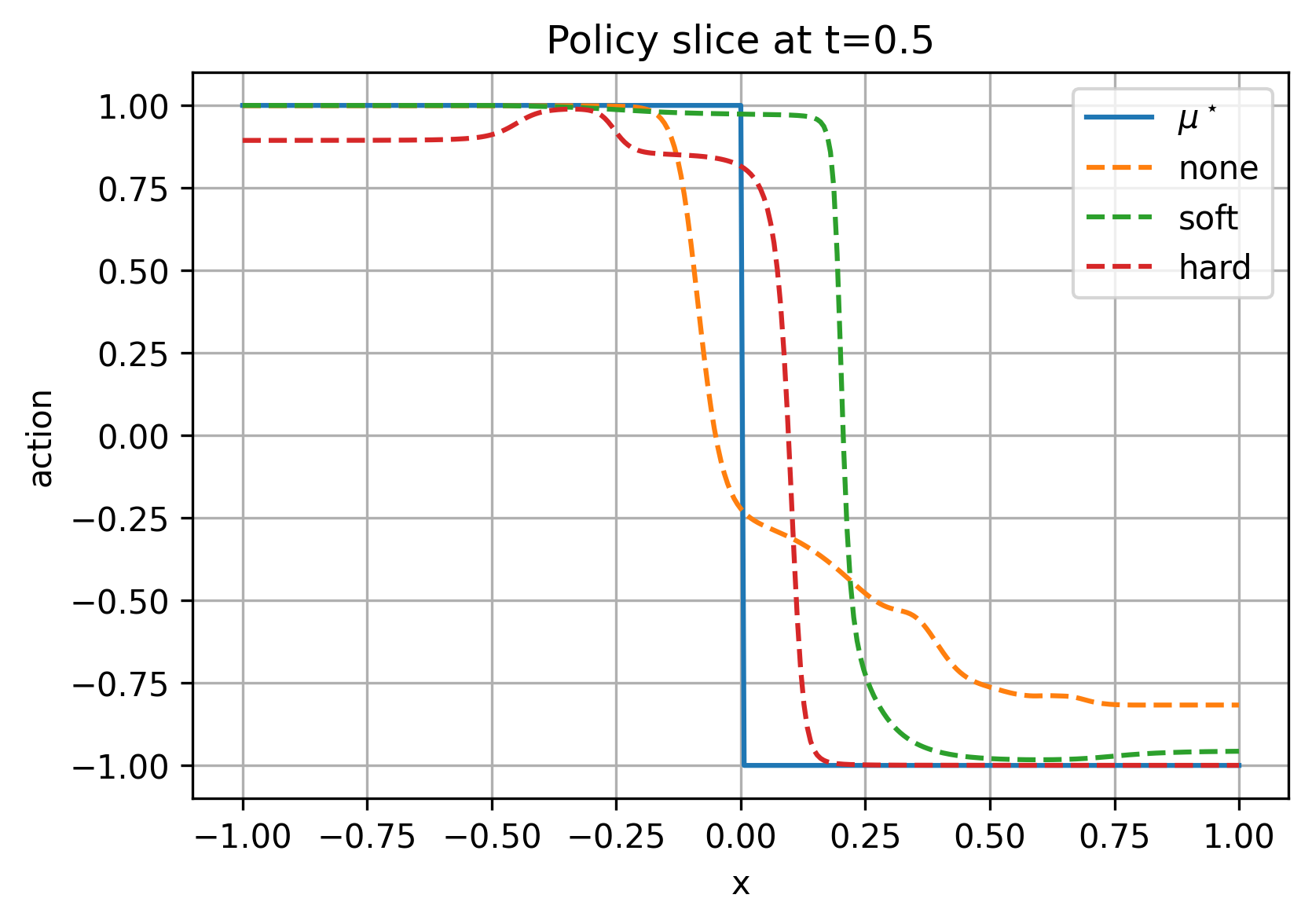}
    \end{subfigure}
    \begin{subfigure}[b]{.3\linewidth}
        \centering
        \includegraphics[width=\linewidth]{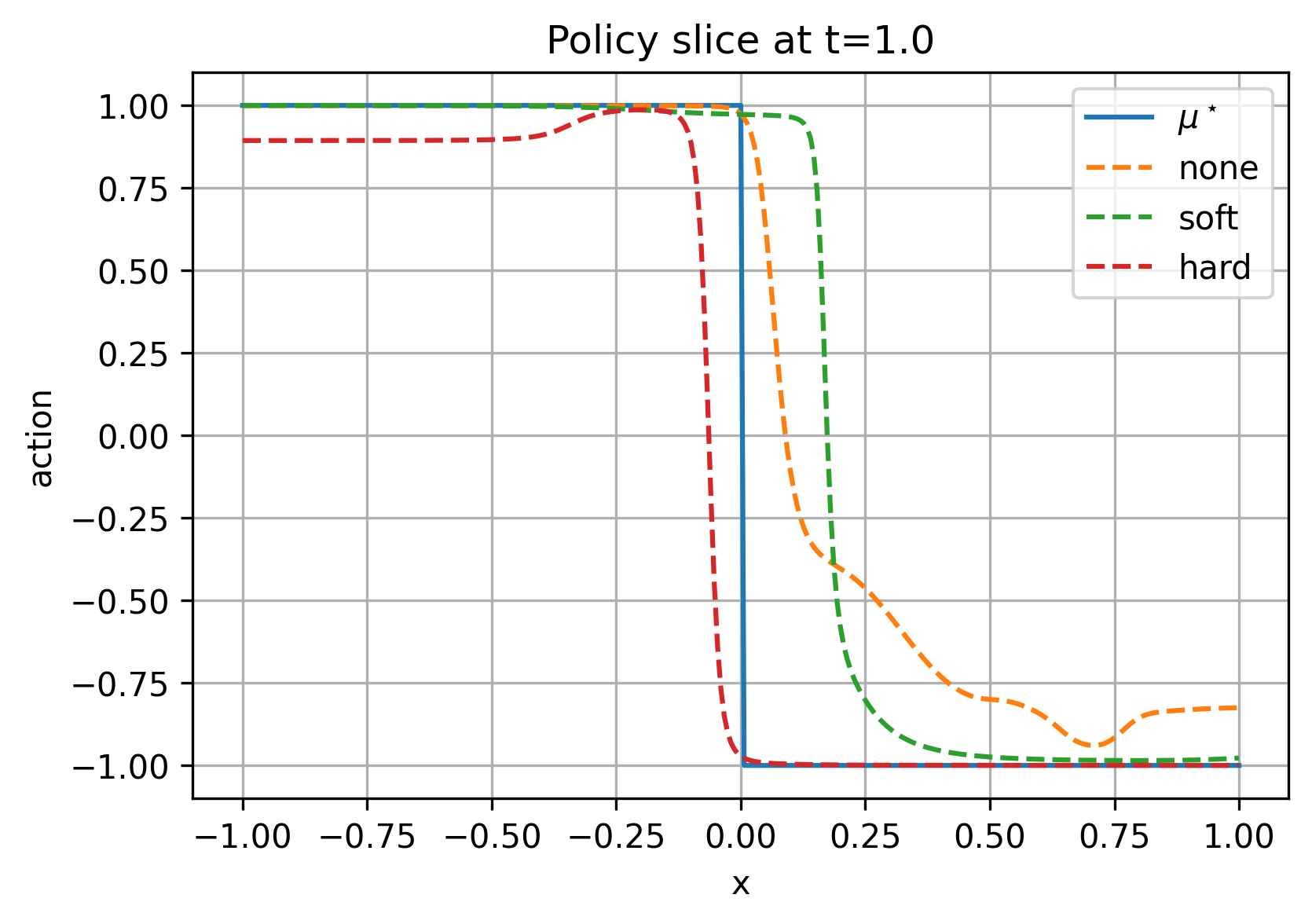}
    \end{subfigure}
    
    \caption{Value functions and policies under different constraints}
    \label{fig:1d-value-policy}
\end{figure}

\subsection{Two-dimensional reservoir cascade regulation}
\label{subsec:2d}

We next consider a two-dimensional reservoir cascade. Let $X_t^i\in[-H_i,H_i]$ be the water-level deviation of reservoir $i$, and let $a_t^i\in[-A_i,A_i]$ be the relative release rate of reservoir $i$. A larger release from reservoir $1$ reduces the upstream level but increases the downstream inflow into reservoir $2$. The reflected dynamics are governed by the following system of RSDEs:
\begin{align*}
\mathrm{d}X_t^1&=\left(m_1(t)-a_t^1\right)\mathrm{d}t+\sigma_1\mathrm{d}W_t^1+\mathrm{d}K_t^{-,1}-\mathrm{d}K_t^{+,1},\\
\mathrm{d}X_t^2&=\left(m_2(t)+\rho a_t^1-a_t^2\right)\mathrm{d}t+\sigma_2\mathrm{d}W_t^2+\mathrm{d}K_t^{-,2}-\mathrm{d}K_t^{+,2}.
\end{align*}
Here $m_i(t)$ is the mean net inflow deviation of reservoir $i$,  which may vary over time due to seasonal or stochastic factors, and $\rho\in[0,1]$ is the downstream transfer coefficient. The term $\rho a_t^1$ models the fraction of the upstream release that enters the downstream reservoir. The reflection terms $K^{\pm,i}$ ensure that the state process remains within the admissible hypercube $[-H_1,H_1]\times[-H_2,H_2]$ by enforcing normal reflection at the boundaries.

The running reward is
\begin{align*}
r(t,x,a)=- \Bigg[&q_1x_1^2+q_2x_2^2+q_{12}\left(\frac{x_1}{H_1}-\frac{x_2}{H_2}\right)^2+\lambda_1a_1^2+\lambda_2a_2^2 \\
&\quad+\gamma_{b}\left[-\log\left(1-\left(\frac{x_1}{H_1}\right)^2+\varepsilon_{b}\right)-\log\left(1-\left(\frac{x_2}{H_2}\right)^2+\varepsilon_{b}\right)\right]\Bigg].
\end{align*}
The first two terms penalize deviations of each reservoir from its target level, encouraging the system to maintain desired storage volumes. The third term penalizes imbalance between the relative storage levels of the two reservoirs, promoting an equitable distribution of water across the cascade. The fourth and fifth terms penalize aggressive control which causes operational costs. The logarithmic terms serve as barrier functions that increasingly penalize states approaching the boundary of the admissible domain, thereby discouraging the system from operating near the physical limits. The terminal reward is analogously defined as
\begin{align*}
g(x)=-\left[\kappa_1x_1^2+\kappa_2x_2^2+\kappa_{12}\left(\frac{x_1}{H_1}-\frac{x_2}{H_2}\right)^2\right].
\end{align*}

In this experiment, the parameters are set as $T=1$, $\beta=0.5$, $\Delta t=0.01$, $H_1=H_2=A_1=A_2=1$, $\sigma_1=\sigma_2=0.45$, $\rho=0.65$, $m_1(t)=0.25\sin(2\pi t/T)$, $m_2(t)=0.20\cos(2\pi t/T)$, $q_1=q_2=1$, $q_{12}=0.8$, $\lambda_1=\lambda_2=0.03$, $\kappa_1=\kappa_2=1$, $\kappa_{12}=0.8$, $\gamma_{b}=0.03$, and $\varepsilon_{b}=10^{-3}$. The value, advantage-rate, and actor networks are fully connected networks with three hidden layers, $128$ hidden units per layer, and $\tanh$ activations. For the hard Neumann method, the value network replaces the raw state input by cosine features with frequencies $\{1,2,3,4,6,8\}$ in each coordinate. The actor is bounded componentwise by $(A_1,A_2)\cdot\tanh(\cdot)$. We train for $4000$ episodes and the returns during training can be seen in Figure \ref{fig:2d-returns} with five times the standard deviation for visual clarity. Since no closed-form solution is available for this two-dimensional problem, we evaluate the learned policies qualitatively by visualizing their surfaces at selected time slice. Figures \ref{fig:2d-policy-none}, \ref{fig:2d-policy-soft}, and \ref{fig:2d-policy-hard} display the learned policy functions for the unconstrained, soft-constrained, and hard-constrained approaches, respectively.

\begin{figure}[ht]
    \centering
    \includegraphics[scale=0.5]{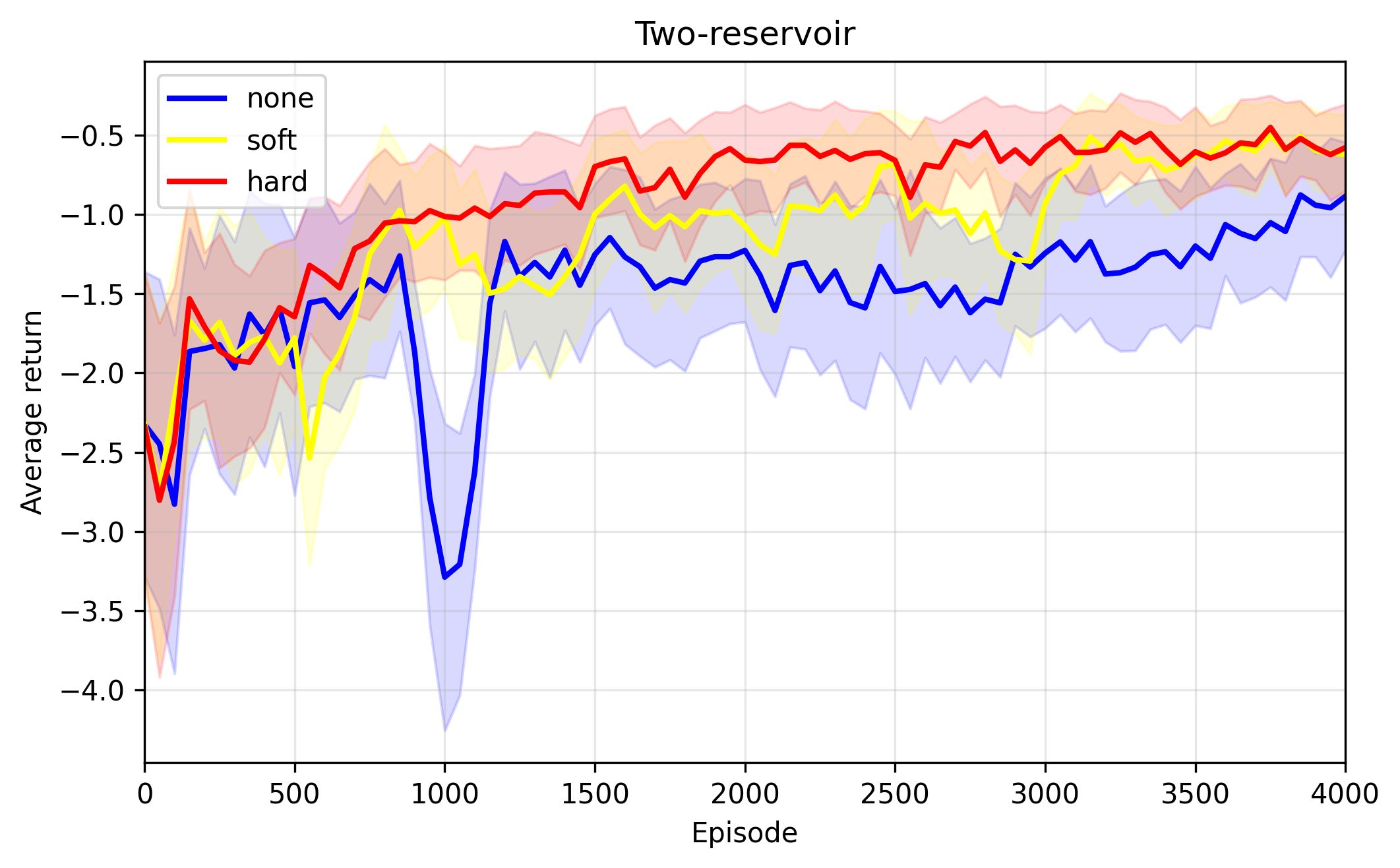}
    \caption{Returns during training}
    \label{fig:2d-returns}
\end{figure}

To provide a quantitative reference, we also compute a finite-difference approximation of the optimal policy by numerically solving the associated HJB equation using the method of \cite{kush01}; the resulting reference policy is shown in Figure \ref{fig:2d-policy-fd}. While the finite-difference reference exhibits some numerical oscillations owing to the limitations of grid-based discretization in two dimensions, the overall qualitative structure is broadly consistent across all methods.  Notably, the agreement with the reference policy becomes more apparent when the Neumann boundary condition is incorporated through either the soft or hard constraint, suggesting that boundary awareness helps the neural network capture the correct boundary behavior. These results further support the conclusion that the proposed boundary-aware treatments effectively encode the structural information which is helpful for stable and accurate learning in multidimensional settings. 

\begin{figure}[htbp]
    
    
        \centering
        \includegraphics[width=0.7\linewidth]{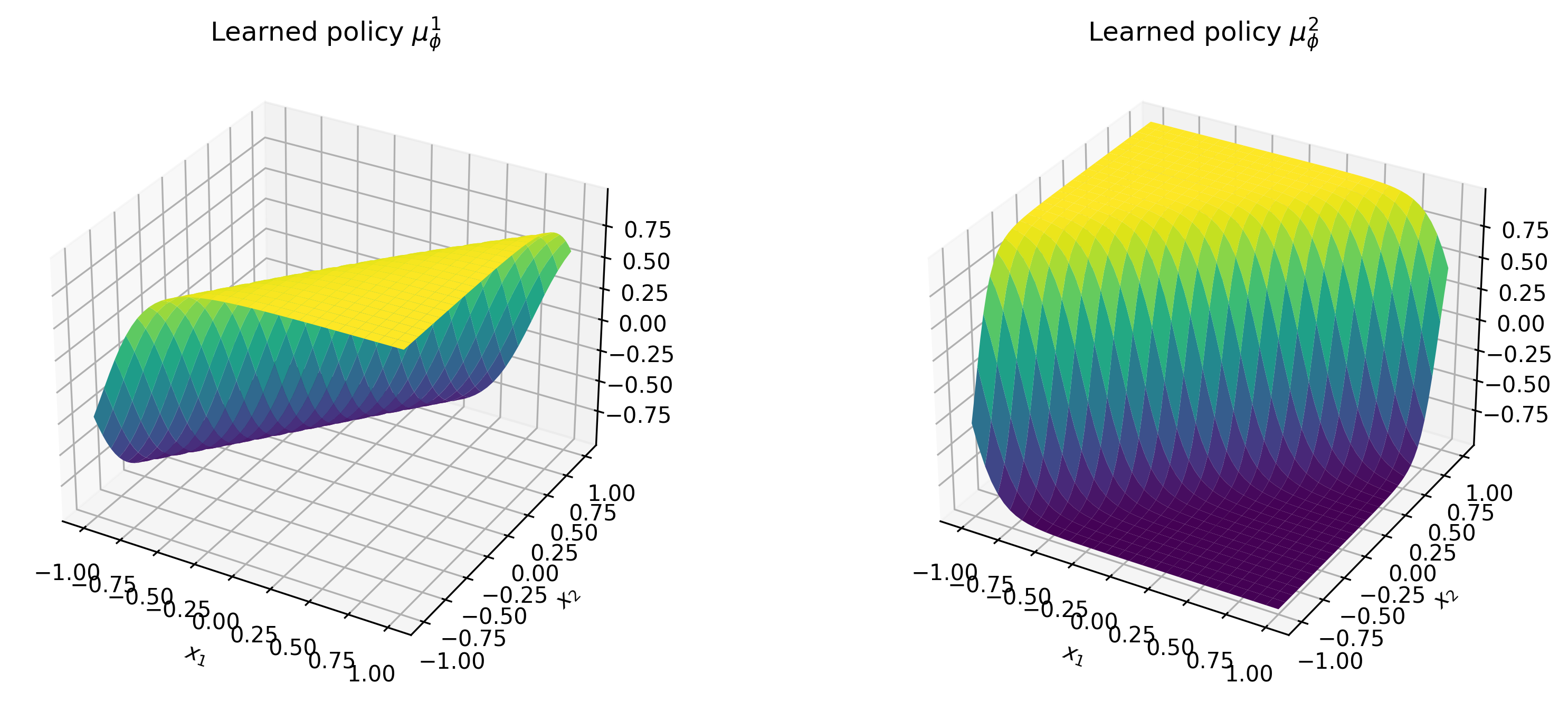}


    \caption{Policy surfaces without boundary constraint at $t=0.5$.}
    \label{fig:2d-policy-none}
\end{figure}

\begin{figure}[htbp]
    \centering
    
    
        \includegraphics[width=0.7\linewidth]{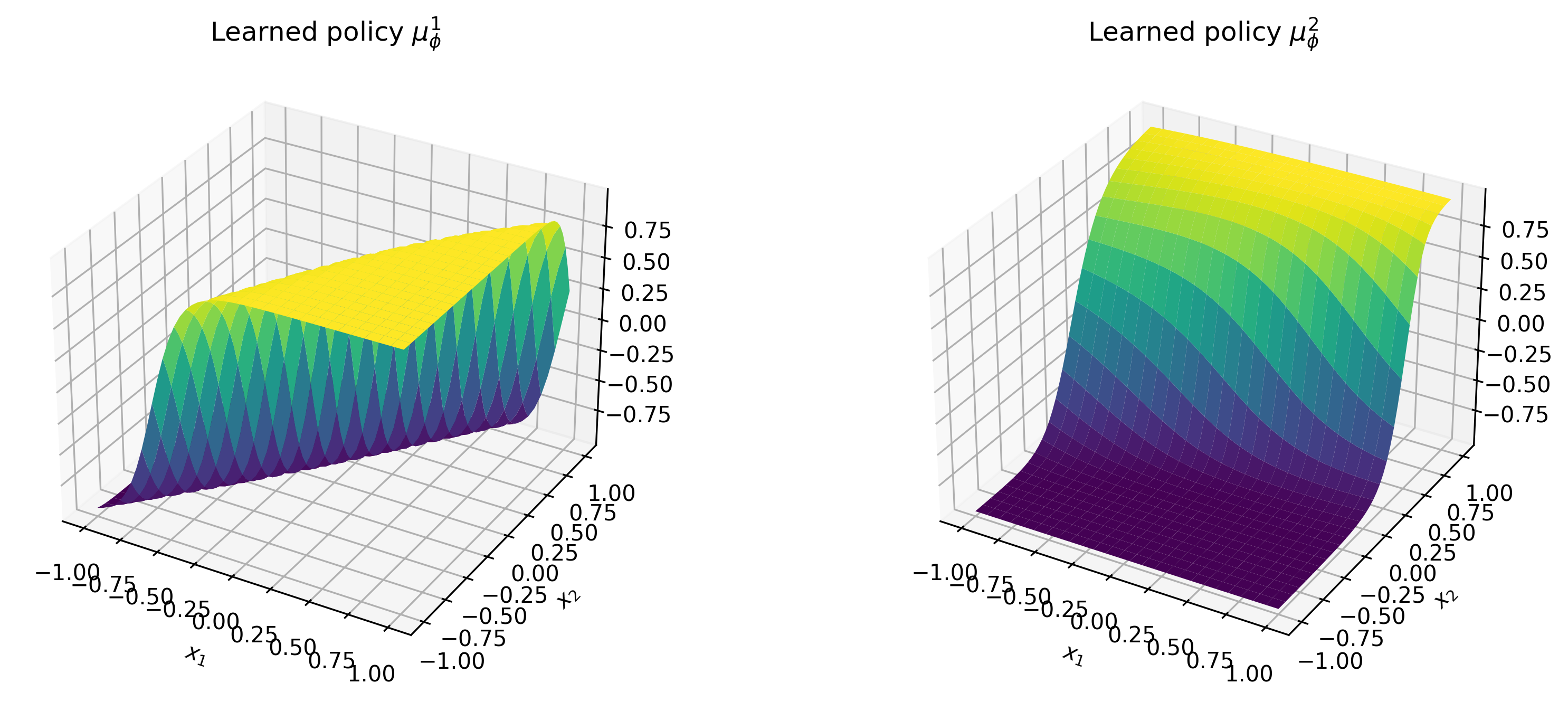}


    \caption{Policy surfaces under soft constraint at $t=0.5$.}
    \label{fig:2d-policy-soft}
\end{figure}

\begin{figure}[ht]
    \centering
    
    
        \includegraphics[width=0.7\linewidth]{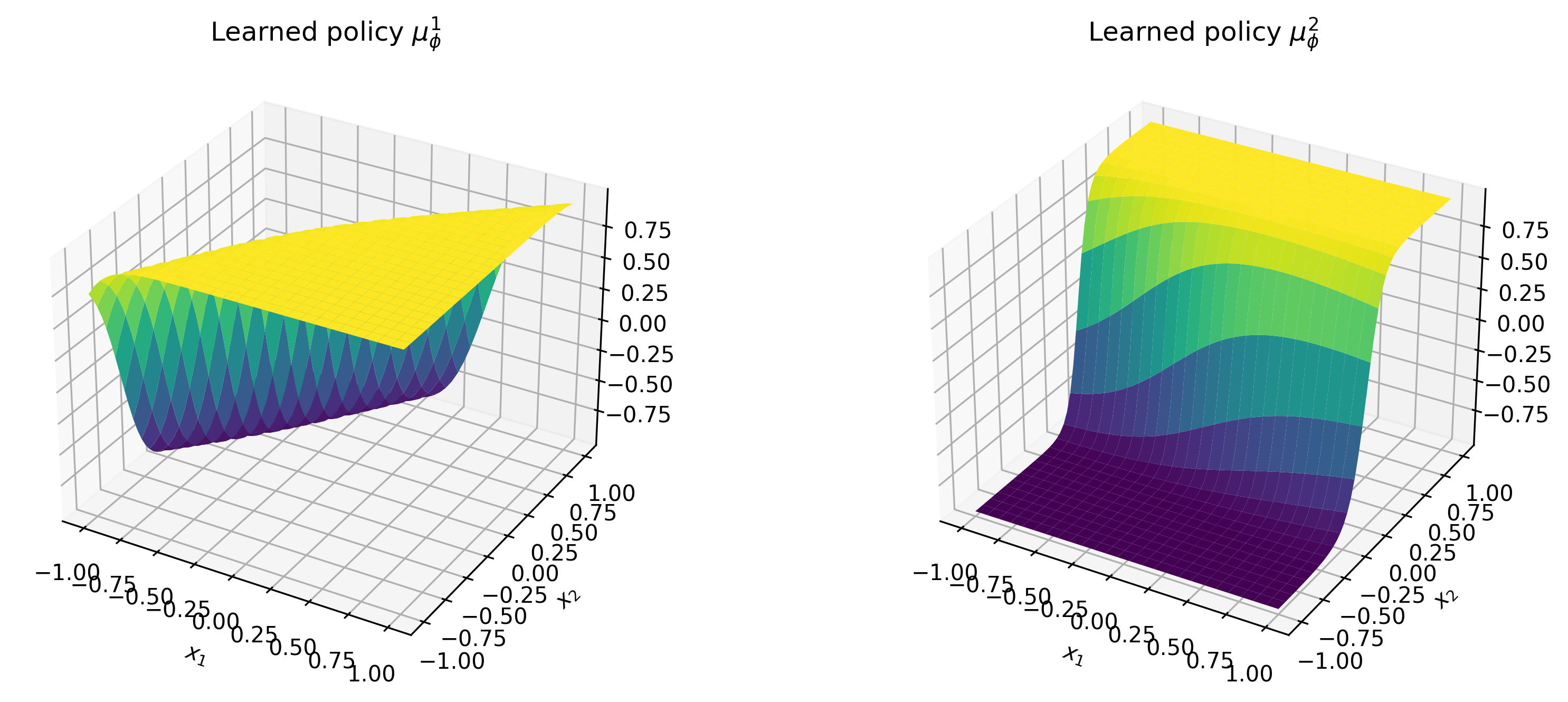}


    \caption{Policy surfaces under hard constraint at $t=0.5$.}
    \label{fig:2d-policy-hard}
\end{figure}

\begin{figure}[ht]
    \centering
    
    
        \includegraphics[width=0.7\linewidth]{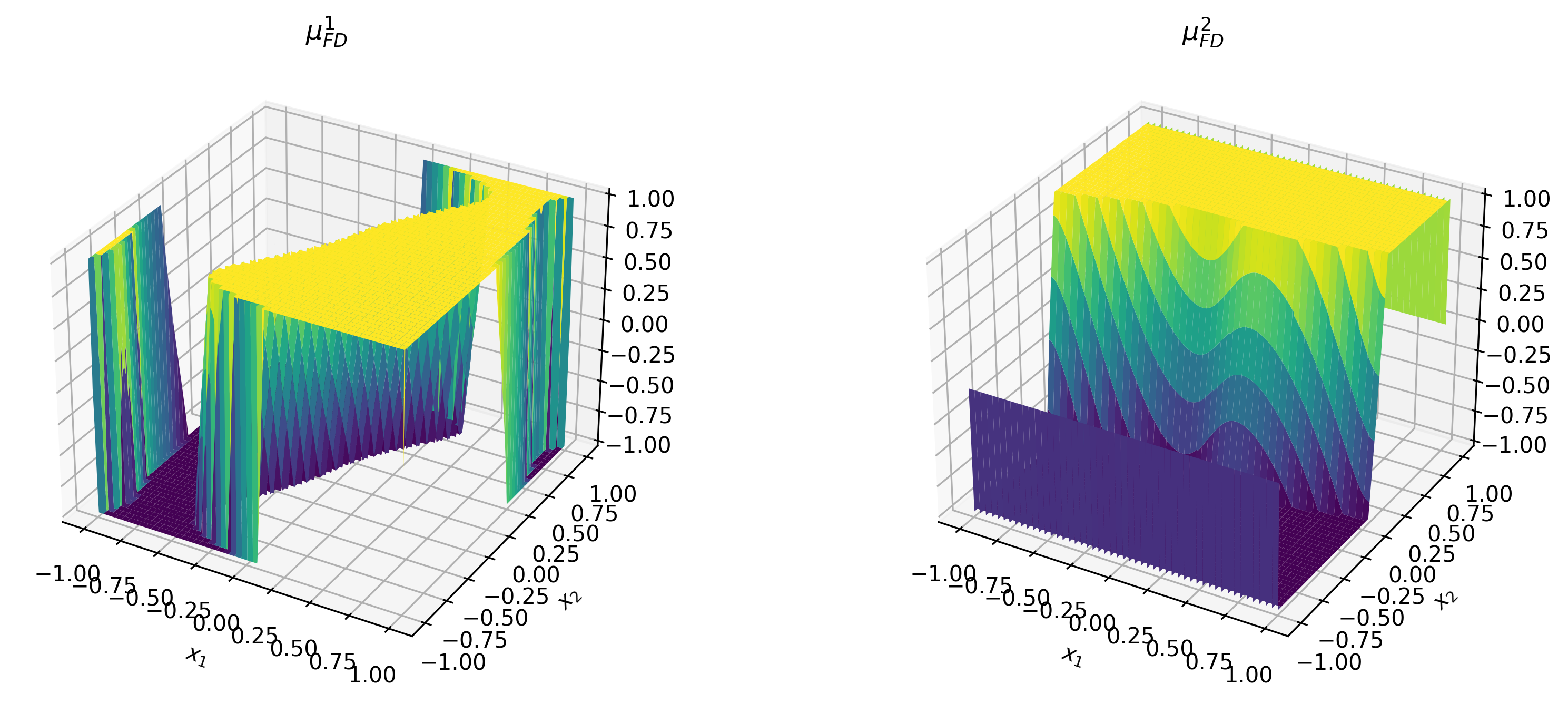}


    \caption{Finite-difference reference policy at $t=0.5$.}
    \label{fig:2d-policy-fd}
\end{figure}

\subsection{Five-dimensional reservoir cascade regulation}
\label{subsec:5d}

Finally, to assess the scalability of the proposed method to higher-dimensional settings, we consider a five-dimensional reservoir cascade. The state vector is given by
\begin{align*}
X_t=(X_t^1,\ldots,X_t^5)\in Q:=\prod_{i=1}^5[-H_i,H_i],
\end{align*}
where $X_t^i$ is the water-level deviation of reservoir $i$ from its target level. The control
$a_t=(a_t^1,\ldots,a_t^5)\in\prod_{i=1}^5[-A_i,A_i]$ represents the release rates. The reflected diffusion dynamics are governed by
\begin{align*}
\mathrm{d}X_t=b(t,X_t,a_t)\mathrm{d}t+	\mathrm{diag}(\sigma_1,\ldots,\sigma_5)\mathrm{d}W_t+\mathrm{d}K_t^- -\mathrm{d}K_t^+.
\end{align*}
The drift structure remains the same as in the two-dimensional cascade, with each reservoir receiving inflow from its upstream neighbor:
$b_1(t,x,a)=m_1(t)-a_1$ and $b_i(t,x,a)=m_i(t)+\rho_{i-1}a_{i-1}-a_i$ for $i=2,\ldots,5$. We set the net inflow deviations to $m_i(t)=\bar{m}_i\sin(2\pi t/T+\varphi_i)$, for $i=1,\ldots,5$.

The running reward is designed to capture the same operational objectives as in the lower-dimensional cases, now extended to five reservoirs:
\begin{align*}
r(t,x,a)=-\sum_{i=1}^5\left[(q_ix_i^2+\lambda_i a_i^2)-\gamma_{b}\log\left(1-\left(\frac{x_i}{H_i}\right)^2+\varepsilon_{b}\right)\right]-q_{adj}\sum_{i=1}^{4}\left(\frac{x_i}{H_i}-\frac{x_{i+1}}{H_{i+1}}\right)^2,
\end{align*}
and the terminal reward is analogously defined as
\begin{align*}
g(x)=-\left[\sum_{i=1}^5\kappa_ix_i^2+\kappa_{adj}\sum_{i=1}^{4}\left(\frac{x_i}{H_i}-\frac{x_{i+1}}{H_{i+1}}\right)^2\right].
\end{align*}

In the experiment, we choose the parameters as $T=1$, $\beta=0.5$, $\Delta t=0.01$, $H_i=A_i=q_i=\kappa_i=1$, $\lambda_i=0.03$, for $i=1,\ldots,5$, $(\sigma_1,\ldots,\sigma_5)=(0.45,0.43,0.40,0.38,0.35)$, $(\rho_1,\rho_2,\rho_3,\rho_4)=(0.65,0.55,0.45,0.35)$, $(\bar m_1,\ldots,\bar m_5)=(0.25,0.22,0.20,0.18,0.16)$, $(\varphi_1,\ldots,\varphi_5)=\left(0,\frac{\pi}{2},\pi,\frac{3\pi}{2},\frac{\pi}{4}\right)$ and $q_{adj}=\kappa_{adj}=0.8$, $\gamma_{b}=0.03$, $\varepsilon_{b}=10^{-3}$.
The value, advantage-rate, and actor networks use five hidden layers with $128$ hidden units per layer and $\tanh$ activations. The hard Neumann value network uses cosine frequencies $\{1,2,3,4,6,8,10,12\}$ in each state coordinate. We train for $4000$ episodes and display the returns in Figure \ref{fig:5d-returns} with five times the standard deviation for visual clarity. The same as in the two-dimensional experiment, the results here demonstrate that both the soft and hard boundary treatments lead to more stable and faster convergence than the unconstrained baseline. The boundary-aware methods achieve slightly better final performance, as showed in the return curves, and the learned policy surfaces exhibit smoother and more physically plausible behavior. This observation reinforces the central message of this paper: incorporating structural information about the Neumann boundary condition into the value network, whether through a soft penalty or a hard architectural constraint, provides significant benefits in terms of learning stability and final performance, particularly as the state dimension increases and the control problem becomes more complex.

\begin{figure}[htbp]
    \centering
    \includegraphics[scale=0.5]{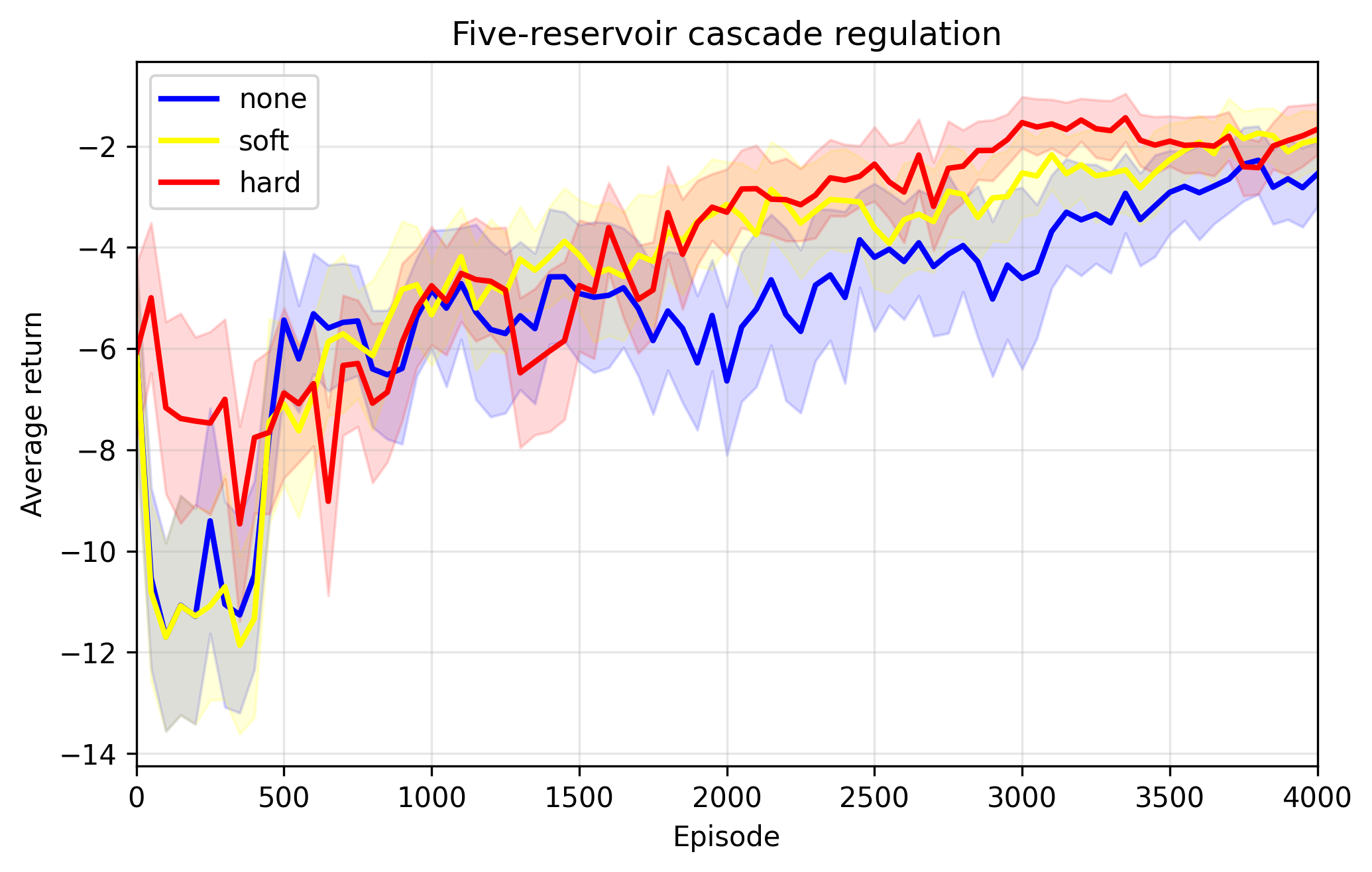}
    \caption{Returns during training.}
    \label{fig:5d-returns}
\end{figure}


\end{document}